\documentclass[11pt,a4paper,reqno]{amsart}
\usepackage{tikz,tikz-3dplot}
\usepackage{pgfplots}
\usepackage[margin=1in]{geometry} 
\usepackage{float}
\usepackage{amsfonts,amsmath,graphics,epsfig,latexsym,times}
\usepackage[format=hang, font=normalsize]{subfig}
\captionsetup[subfigure]{margin=0pt, parskip=0pt, hangindent=0pt, indention=0pt,captionskip=0.5cm, labelformat=parens, labelfont=rm}
\usepackage{ae,aecompl}
\newif\ifpix \pixtrue
\usepackage{setspace,fancyhdr,amssymb,amsthm,graphicx,ifthen,float,calrsfs} 
\numberwithin{equation}{section} \newtheorem{thm}{Theorem}[section]  
\newtheorem{cor}[thm]{Corollary}    
\newtheorem{lem}[thm]{Lemma}        
\newtheorem{prop}[thm]{Proposition}  
\theoremstyle{definition} \newtheorem{dfn}[thm]{Definition}
 
\newtheorem*{claim*}{Claim} 
 
\newtheorem{rmk}{Remark}

\allowdisplaybreaks[1]

\newcommand{\cA}{\mathcal{A}}
\newcommand{\tPhi}{\widetilde{\Phi}}
\newcommand{\tN}{\widetilde{N}}
\newcommand{\cR}{\mathcal{R}}
\newcommand{\cD}{\mathcal{D}}

\DeclareMathOperator{\Aut}{Aut} 
\DeclareMathOperator{\Av}{Av} 
\DeclareMathOperator{\pr}{pr}

\DeclareMathOperator{\diag}{diag} 
\newcommand{\rd}{{\mathrm d}}

\def\frt{{\mathfrak{t}}}

\DeclareMathOperator{\Div}{div}

\DeclareMathOperator{\Vol}{Vol}

\DeclareMathOperator{\supp}{supp} 
\DeclareMathOperator{\Int}{Int} 
\DeclareMathOperator{\Hess}{Hess}

\DeclareMathOperator{\Ker}{Ker}

\newcommand{\bC}{\mathbb{C}}
 
\newcommand{\bR}{\mathbb{R}} \newcommand{\bZ}{\mathbb{Z}}

 \newcommand{\bN}{\mathbb{N}}
 \newcommand{\bP}{\mathbb{P}}

\newcommand{\cI}{\mathcal{I}}

\newcommand{\cO}{\mathcal{O}} 

\renewcommand{\phi}{\varphi}
\renewcommand{\div}{\mbox{div}}
\newcommand{\ve}{\varepsilon}

\newcommand{\del}{\partial}




\renewcommand{\geq}{\geqslant}
\renewcommand{\leq}{\leqslant}
\renewcommand{\ge}{\geqslant}

\author{Florian T. Pokorny}
\address{Computer Vision and Active Perception Lab / Centre for Autonomous
Systems, School of Computer Science and Communication, KTH Royal
Institute of Technology, SE-100 44 Stockholm, Sweden}
\email{fpokorny@kth.se}
\author{Michael Singer}
\address{Department of Mathematics, University College London, WC1E 6BT}
\email{michael.singer@ucl.ac.uk}
\title{Toric partial density functions and stability of toric varieties
\footnotetext{\textsc{ Accepted by Mathematische Annalen on 13 September 2013.}}
}
\begin{document}

\begin{abstract} 
   Let $(L, h)\to (X, \omega)$ denote a polarized toric K\"ahler manifold. Fix a
toric submanifold $Y$ and denote by $\hat{\rho}_{tk}:X\to \bR$ the
partial density function corresponding to 
the partial Bergman kernel projecting smooth sections of $L^k$ onto holomorphic 
sections of $L^k$ that vanish to order at least $tk$ along $Y$, for
fixed $t>0$ such that $tk\in \bN$. We prove the existence of a 
distributional expansion of $\hat{\rho}_{tk}$ as $k\to \infty$, including the identification of the coefficient of
$k^{n-1}$ as a distribution on $X$.  This expansion is used to give a
direct proof that if $\omega$ has constant scalar curvature, then 
$(X, L)$ must be slope semi-stable with respect to $Y$
(cf. \cite{Thomas_cscK}).  Similar results are also obtained for more
general partial density functions.  These results have analogous
applications to the study of toric K-stability of toric varieties.
\end{abstract} 
\maketitle
\tableofcontents
\section{Introduction} 

\subsection{Polarized varieties and density functions}
Let $(X,L)$ be a smooth polarized projective variety of complex
dimension $n$.  Then the space $V_k = H^0(X,\cO(L^k))$ of holomorphic
sections of $L^k$ is a finite-dimensional vector space whose dimension
grows like $k^n$ as $k\to +\infty$.  The ampleness of $L$ also
corresponds to the existence of a metric $h$ on $L$ of positive
curvature, $F_h = -i\omega$, where $\omega$ is a K\"ahler form.
Denote by $g$ the associated Riemannian metric.   The choice of such a
metric $h$ with positive curvature equips $V_k$ with a
positive-definite innner product, making it a finite-dimensional
Hilbert space. 

The {\em density function} $\rho_k: M \to \bR$ associated with $(L,h)$ is defined by 
\begin{equation}\label{e1.15.8.12}
    \rho_k(p) = \sum_{\alpha} |e_{\alpha,k}(p)|^2, \quad \text{for }p\in M,
\end{equation}
where the $\{e_{\alpha,k}\}$ form an orthonormal basis of $V_k$.  Here
$|e_{\alpha,k}|^2$ is the point-wise length-squared of the the section
$e_{\alpha,k}$, computed using the metric $h^k$ on $L^k$.  We shall
refer to $|e_{\alpha,k}|^2$ as the {\em mass-density} of
$e_{\alpha,k}$. It is easy to see that $\rho_k$ is independent of the
choice of orthonormal basis $\{e_{\alpha,k}\}$.

The density function $\rho_k$ has been studied by many authors and it is known
that for large $k$, there is a complete asymptotic expansion
\begin{equation}\label{e1.18.8.11}
\rho_k \sim \left(\frac{k}{2\pi}\right)^n \sum_{j=0}^\infty a_j k^{-j},
\end{equation}
where the $a_j$ are smooth functions on $X$ which are local invariants of $g$,
so for example
\begin{equation}\label{e1a.18.8.11}
a_0 = 1 \mbox{ and }a_1 = \frac{1}{2}s,
\end{equation}
where $s$ is the scalar curvature of the K\"ahler metric $g$.
The reader is referred to the literature for the background to these
statements. Tian's famous paper \cite{Tian_1990} essentially gives the
$k^n$-term in this expansion; Zelditch \cite{Zelditch_1998} obtained the complete asymptotic
expansion \eqref{e1.18.8.11} (see also \cite{Catlin_1999},
\cite{MR2339952} and \cite{Berman_2008}). The identification of $a_1$
with half the scalar curvature appears in \cite{Lu_2000}.

The formula \eqref{e1.18.8.11} is to be viewed as a  local version of
the Hirzebruch--Riemann--Roch formula: indeed, integration over $X$
gives
\begin{equation}\label{e51.2.10.12}
\dim V_k \sim A_0 k^n + A_1 k^{n-1} + \cdots,
\end{equation}
where 
\begin{equation}\label{e52.2.10.12}
A_j = \int_X a_j\,\frac{\omega^n}{n!}.
\end{equation}
So
\begin{equation}\label{e53.2.10.12}
A_0 = \left(\frac{1}{2\pi}\right)^n \Vol_g(X) \mbox{ and }
A_1 = \left(\frac{1}{2\pi}\right)^n\int_X \frac{s}{2}\,\frac{\omega^n}{n!}.
\end{equation}
Note that \eqref{e51.2.10.12} also gives the leading coefficients of
the Hilbert polynomial $\chi(X,L^k)$ because the higher cohomology
groups vanish for large $k$.
The density function has been an important tool in the study of
K\"ahler metrics of constant scalar curvature over the last decade or so,
starting with Donaldson's pioneering work \cite{Don_Scal1} comparing
balanced metrics---which make $\rho_k$ constant for large $k$
---with metrics of constant scalar
curvature.   The reader is referred also to \cite{MR2161248,MR2508897,
  MR2296414,  MR2058389, MR2669363} for other contributions and
aspects of this circle of ideas.

In this paper we shall study a variant of $\rho_k$, the {\em partial
  density function} associated in the first instance to a complex
submanifold $Y$ of $X$.  Given a rational number $t\geq 0$, we may
twist $L^k$ by $\cI_Y^{tk}$, where $\cI_Y$ is the sheaf 
of functions vanishing along $Y$. This leads to a subspace
\begin{equation}\label{e11.12.9.12}
\hat{V}_{tk} = H^0(X, \cO(L^k)\otimes \cI_Y^{tk})
\end{equation}
of $V_k$ which of course inherits a Hilbert-space structure from
$V_k$.  Informally, $\hat{V}_{tk}$ is the space of sections of $L^k$
which vanish to  order at least $tk$ along $Y$.  We shall obtain a
distributional asymptotic expansion of $\hat{\rho}_{tk}$
 in the case that all data are {\em toric} and we shall use this 
expansion to give formula of the {\em slope} of the submanifold $Y$ 
in the sense of Ross and Thomas \cite{RT_07,Thomas_cscK}---see 
below for definitions.  From this we obtain an immediate proof 
that if the metric $g$ in the K\"ahler class $c_1(L)$ can be chosen to 
have constant scalar curvature, then $(X,L)$ must be 
slope semi-stable with respect to $Y$.

In order to state these results precisely, suppose that $X$ is a smooth toric variety with corresponding momentum
polytope $P \subset \bR^n$ and that $Y$ is an irreducible smooth toric subvariety
corresponding to a face $F$ of $P$. The reader is referred to
\S\ref{s_back} for a review of the correspondence between toric
varieties and convex polytopes. We may choose affine coordinates
$(x_j)$ on $\bR^n$ so that 
\begin{equation}\label{e1.12.9.12}
F = \{x_1 = \cdots = x_q = 0\} \cap P, 
\end{equation}
while $x_j\geq 0$ on $P$ for $j=1,\ldots, q$.  Let
\begin{equation}\label{e2.12.9.12}
\Phi(x) = x_1 + \cdots + x_q.
\end{equation}
Then $\Phi$ is non-negative on $P$ and vanishes precisely on $F$.  Pulled back to $X$,
$\Phi$ is everywhere non-negative, vanishing only on $Y$.  In fact,
$\Phi$ vanishes quadratically on $Y$: if local complex coordinates $z$ are chosen so that
$Y$ corresponds to $z_1=\cdots = z_q=0$, $\Phi = O(\sum_1^q |z_i|^2)$
(see Lemma~\ref{l1.17.11.11}).

Define three subsets of $X$:
\begin{equation}\label{three-subsets}
U_t = \Phi^{-1}[0,t), S_t = \Phi^{-1}(t), D_t = \Phi^{-1}(t,\infty),
\end{equation}
regarding $\Phi$ as a function on $X$.  Note that these subsets depend
upon the K\"ahler metric on $X$, not just the complex structure of $X$.

Denote by $\rd \sigma_t$ the Leray form of $S_t$: thus $\rd\sigma_t$
is a $(2n-1)$-form on $X$ satisfying $\rd \sigma_t\rd\Phi = \omega^n/n!$ along
$S_t$. Let $g$ be a toric K\"ahler metric on $X$ with scalar curvature
$s$.  Then we may define a distribution
$\hat{a}_t$ on $X$ (with support on $S_t$) by
\begin{equation}\label{e3.13.9.12}
\langle \hat{a}_t,f\rangle = \int_{S_t} f\,\rd \sigma_t -
\frac{1}{2}\frac{\rd}{\rd t}\int_{S_t} f|\rd\Phi|_g^2\,\rd \sigma_t
\mbox{ ($f\in C^{\infty}(X)$).}
\end{equation}

Then our first main theorem is as follows:
\begin{thm}\label{t1.5.9.12}
Let the notation be as above.  Then
\begin{equation}\label{e4.21.9.12}
\hat{\rho}_{tk}(p) = O(k^{-\infty})\mbox{ if }p\in U_t
\end{equation}
and
\begin{equation}\label{e5.21.9.12}
\hat{\rho}_{tk}(p) = \rho_k(p) + O(k^{-\infty})\mbox{ if }p\in D_t
\end{equation}
Moreover the $O$'s are uniform if $p$ moves in a compact subset
respectively of $U_t$ or $D_t$.

If $f \in C^{\infty}(X)$, then 
\begin{equation}\label{e54.5.9.12}
\langle \hat{\rho}_{tk},f\rangle =
\left(\frac{k}{2\pi}\right)^n\left( \int_{D_t} f \frac{\omega^n}{n!} +
  \frac{1}{2k}\left(\int_{D_t} sf \frac{\omega^n}{n!} + \langle
  \hat{a}_t,f\rangle\right)  + \frac{1}{k^{2}}\langle R_k, f\rangle\right).
\end{equation}
$R_k$ is a torus-invariant distribution on $X$ such that $\langle
R_k,f\rangle \leq C\|f\|_{C^{n+4}(X)}$, $C$ being bounded independent
of $k$. 
\end{thm}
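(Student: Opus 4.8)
The plan is to pass to the moment polytope, where $\hat\rho_{tk}$ becomes an explicit lattice sum, and then carry out a Laplace--Euler--Maclaurin analysis whose only delicate point is a boundary layer along $S_t$. Recall from \S\ref{s_back} that $V_k$ carries the orthogonal monomial basis $\{\chi^m:m\in kP\cap\bZ^n\}$, that on the open orbit the mass-density of $\chi^m$ depends only on the moment coordinate $x$, and that the normalized bump $B_{m,k}(x)\ceq|\chi^m(x)|^2_{h^k}/\|\chi^m\|^2_{L^2}$ is of the form $C_{m,k}\,e^{kg_m(x)}$ with $g_m$ having a unique nondegenerate maximum at $x=m/k$; thus $B_{m,k}$ is a bump of width $\sim k^{-1/2}$ concentrated at $m/k$, with $\int_X B_{m,k}\,\tfrac{\omega^n}{n!}=1$ and $\rho_k=\sum_{m\in kP\cap\bZ^n}B_{m,k}$, and for torus-invariant integrands $\int_X(\cdot)\tfrac{\omega^n}{n!}$ equals the integral over $P$ against the Duistermaat--Heckman measure, a constant multiple of Lebesgue measure. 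The vanishing order of $\chi^m$ along $Y$ is $m_1+\cdots+m_q=\Phi(m)$ --- in the local coordinates of Lemma~\ref{l1.17.11.11}, $\chi^m$ is $z_1^{m_1}\cdots z_q^{m_q}$ times a nonvanishing factor and a monomial in the remaining variables --- so $\hat V_{tk}$ is spanned by the $\chi^m$ with $\Phi(m)\geq tk$, whence
\[
\hat\rho_{tk}=\sum_{m\in kP_t\cap\bZ^n}B_{m,k},\qquad P_t\ceq P\cap\{\Phi\geq t\}.
\]
Since $\hat\rho_{tk}$ and each term on the right of \eqref{e54.5.9.12} is torus-invariant ($S_t$, $D_t$, $\omega$, $s$, $\rd\sigma_t$, $|\rd\Phi|_g^2$ all are), we may replace $f$ by $\Av f$, which has $\|\Av f\|_{C^{n+4}}\leq\|f\|_{C^{n+4}}$ and descends to a function $f(x)$ on $P$; so it suffices to treat torus-invariant $f$.

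For the pointwise statements, the standard off-diagonal estimate for toric Bergman bumps (\S\ref{s_back}) gives $B_{m,k}(x)\leq C k^n e^{-kc\,|x-m/k|^2}$, uniformly. If $p\in U_t$ has moment image $x$ then $|x-m/k|\geq\operatorname{dist}(x,P_t)>0$ for every $m/k\in P_t$, so summing the $O(k^n)$ bumps gives $\hat\rho_{tk}(p)=O(k^{-\infty})$, and the bound is uniform on compacta of $U_t$ since $\operatorname{dist}(\cdot,P_t)$ is bounded below there; symmetrically, for $p\in D_t$ one has $\rho_k(p)-\hat\rho_{tk}(p)=\sum_{m/k\in U_t}B_{m,k}(p)=O(k^{-\infty})$, because $|x-m/k|\geq(\Phi(x)-t)/\sqrt q>0$ for each such $m$. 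This proves \eqref{e4.21.9.12}--\eqref{e5.21.9.12} together with the uniformity.

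For the integrated expansion write $\hat\rho_{tk}=\rho_k-\check\rho_{tk}$ with $\check\rho_{tk}\ceq\sum_{m\in kU_t\cap\bZ^n}B_{m,k}$, the density function of the truncated region $U_t$; integrating the uniform expansion \eqref{e1.18.8.11} ($a_0=1$, $a_1=\tfrac12 s$) against $f$ gives $\langle\rho_k,f\rangle=(k/2\pi)^n(\int_X f\,\tfrac{\omega^n}{n!}+\tfrac1{2k}\int_X sf\,\tfrac{\omega^n}{n!}+O(k^{-2}\|f\|_{C^0}))$, which splits into $D_t$- and $U_t$-parts as $S_t$ has measure zero, so everything reduces to the asymptotics of $\int_P\check\rho_{tk}f\,\tfrac{\omega^n}{n!}$. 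By the argument of the previous paragraph applied near $\partial P\cap\overline{U_t}$, $\check\rho_{tk}$ agrees with $\rho_k$ outside every fixed neighbourhood of $S_t$ up to $O(k^{-\infty})$, hence
\[
\int_P\check\rho_{tk}f\,\frac{\omega^n}{n!}=\int_{U_t}\rho_k f\,\frac{\omega^n}{n!}+\Big(\int_T\check\rho_{tk}f\,\frac{\omega^n}{n!}-\int_{T\cap U_t}\rho_k f\,\frac{\omega^n}{n!}\Big)+O(k^{-\infty}),
\]
where $T$ is a $k^{-1/2+\ve}$-tube about $S_t$, and the first term is $(k/2\pi)^n(\int_{U_t}f\,\tfrac{\omega^n}{n!}+\tfrac1{2k}\int_{U_t}sf\,\tfrac{\omega^n}{n!}+O(k^{-2}\|f\|_{C^0}))$ by the uniform expansion of $\rho_k$. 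It remains to show that the bracket equals $-(k/2\pi)^n\tfrac1{2k}\langle\hat a_t,f\rangle+O(k^{n-2}\|f\|_{C^{n+4}})$. This is a boundary-layer computation: in $T$ one rescales the transverse coordinate $\Phi-t=O(k^{-1/2})$, uses the Laplace expansion of each $B_{m,k}$ to second order (the Hessian of $g_m$, i.e.\ the toric metric, entering at first subleading order), and sums over the $\sim k^{n-1}$ relevant lattice points; one finds that $\check\rho_{tk}$ equals $(k/2\pi)^n(1+\tfrac{s}{2k})$ times a smoothed step, an error-function profile whose transition width at a point of $S_t$ is governed by $|\rd\Phi|_g^2$. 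The lattice-versus-integral discrepancy across $\{\Phi=tk\}$ contributes the half-boundary Euler--Maclaurin term $\int_{S_t}f\,\rd\sigma_t$, while applying the coarea formula for $\Phi$ to the profile shows that the zeroth transverse moment of (profile minus sharp step) vanishes, its first moment is $O(k^{-1})$ proportional to $|\rd\Phi|_g^2$, and combining this first moment with the first-order $\tau$-variation of $\rd\sigma_\tau$ produces exactly $-\tfrac12\tfrac{\rd}{\rd t}\int_{S_t}f|\rd\Phi|_g^2\,\rd\sigma_t$; together these assemble into $\langle\hat a_t,f\rangle$ as in \eqref{e3.13.9.12}. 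Counting derivatives in the Laplace and Euler--Maclaurin expansions bounds each omitted term by $k^{-2}(k/2\pi)^n\|f\|_{C^{n+4}}$, and $R_k$ is torus-invariant because $\hat\rho_{tk}$ is. (On $X=\bP^1$, $Y=\{0\}$, the whole computation reduces to a truncated binomial/Beta sum, which already exhibits both the $O(k^{-\infty})$ behaviour off $S_t$ and the $|\rd\Phi|_g^2$-weighted transverse derivative of $f$ on $S_t$.)

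The \textbf{main obstacle} is the boundary-layer analysis near $S_t$: organizing it so that the several contributions of size $k^{n-1}$ --- the half-boundary Euler--Maclaurin correction of the lattice count across $\{\Phi=tk\}$, the finite width of the Bergman bump (the ``$\tfrac12 s$''-type effect, now localized on $S_t$), and the first-order variation of $\rd\sigma_\tau$ in $\tau$ --- add up, with the correct universal constants and signs, precisely to the two-term distribution \eqref{e3.13.9.12}. This requires coordinates adapted to $S_t$ in which $\rd\sigma_t$ and $|\rd\Phi|_g^2$ are explicit, a check that the lower-dimensional strata $S_t\cap\partial P$ affect only the $O(k^{n-2})$ terms, and the derivative-counting that yields the uniform remainder; everything else is routine given the toric Bergman asymptotics of \S\ref{s_back} and the expansion \eqref{e1.18.8.11}.
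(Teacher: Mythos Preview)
Your treatment of the pointwise statements \eqref{e4.21.9.12}--\eqref{e5.21.9.12} is essentially the paper's argument (Proposition~\ref{p1.1.10.12}): Gaussian decay of the bumps plus a crude count of lattice points.

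For the distributional expansion \eqref{e54.5.9.12}, however, you take a genuinely different route from the paper, and the difference is worth highlighting.  The paper does \emph{not} attempt to understand the transition profile of $\hat\rho_{tk}$ (or $\check\rho_{tk}$) near $S_t$ at all.  Instead it first pairs each individual bump with $f$: Proposition~\ref{p2.24.8.11} gives
\[
\langle |e_{\alpha,k}|^2,f\rangle = f(\alpha)+\tfrac{1}{2k}\bigl(s(\alpha)f(\alpha)+\tfrac12\del_i\del_j(G^{ij}f)(\alpha)\bigr)+O(k^{-2}),
\]
a smooth function of $\alpha$.  Summing this over $\alpha\in P_t\cap\Lambda_k^*$ via Euler--Maclaurin (Theorem~\ref{EMthrm}) produces the bulk integrals over $P_t$ together with a full boundary term $\tfrac12\int_{\partial P_t}f\,\rd\sigma$ at order $k^{n-1}$.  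The distribution $\hat a_t$ then drops out of pure integration by parts: one applies the divergence theorem to $\int_{P_t}\del_i\del_j(G^{ij}f)$, uses the exact boundary behaviour of $G$ (Lemma~\ref{l1.28.8.12}(ii)) to see that the ``old'' facets of $\partial P_t$ contribute $2\int_{\partial P_t^+}f\,\rd\sigma$ which cancels against the Euler--Maclaurin boundary term there, and finally applies the moving--hyperplane divergence identity (Lemma~\ref{l11.28.8.12}) on $N_t$ to produce the $\tfrac{\rd}{\rd t}\int_{N_t}f|\rd\Phi|_g^2\,\rd\sigma$ term.  No boundary layer, no error--function profile, and all constants are forced by Stokes.

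Your approach---split off $\check\rho_{tk}$, use the uniform expansion of $\rho_k$ for the bulk, and analyse the transition profile in a $k^{-1/2+\varepsilon}$ tube---is in principle viable and is closer in spirit to Shiffman--Zelditch.  But everything you flag as ``the main obstacle'' is precisely what the paper's reorganisation avoids: you would have to make rigorous the error--function shape of the profile (which depends on the local metric through $G$), extract its zeroth and first transverse moments with the correct constants, separate the $1/(2k)$ lattice offset from the $O(k^{-1/2})$ profile width, and check that the strata $S_t\cap\partial P$ contribute only at $O(k^{n-2})$.  The paper buys all of this for free by pairing with $f$ \emph{before} summing, so that the lattice sum is of a smooth function and $\hat a_t$ is produced by two applications of the divergence theorem rather than by a moment computation.
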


\begin{rmk}
In fact, our methods give a complete distributional asymptotic
expansion of $\hat{\rho}_{tk}$, see Theorem~\ref{t1.5.10.12}.
\end{rmk}

\begin{rmk}
It is very natural to ask if Theorem~\ref{t1.5.9.12} can be extended
to the general case, i.e. when the data are not toric. By work of
Berman \cite{Berman_2009}, it is known in general that there will be open subsets $U_t$
and $D_t$ of $X$ (depending upon the K\"ahler structure) for which
\eqref{e4.21.9.12} and \eqref{e5.21.9.12} hold.  However, very little
is known about these sets: in particular, there is no reason to
believe $\partial U_t$ is smooth.  In the absence of some information
about the regularity of $\partial U_t$ it is hard to make a
conjecture about the asymptotic behaviour of
$\hat{\rho}_{tk}$ near $\partial U_t$ in the general case.
\end{rmk}

\subsection{Application to slope stability}
\label{s_slope}
The notion of slope stability of a polarized variety $(X,L)$ with
respect to a closed subscheme $Z$ is due to Ross and Thomas
\cite{Thomas_cscK, RT_07}.    It was introduced as part of
their study of K-stability of polarized varieties.  Its advantage is
that it may be relatively easy to show that $(X,L)$ is slope-unstable
with respect to a particular subscheme (or subvariety) implying
that $(X,L)$ cannot be K-stable.  K-stability is important
since it is conjectured to be the algebraic-geometric condition
which is necessary and sufficient for the existence of a K\"ahler
metric of constant scalar curvature in the K\"ahler class $c_1(L)$ -- at
least if $\Aut(X,L)$ is discrete 
\cite{MR1312688, MR1471884, Don_ToricScal}.  The necessity is known
\cite{SKD_Calabi, Stoppa_09} and a proof of the sufficiency has been
announced in the Fano case $L = - K_X$ \cite{CDS1,CDS2,CDS3,CDS4} and \cite{Tian_fano}. However, the
sufficiency for general cscK metrics (i.e., for general polarizations)
remains a major open problem.

One of our motivations for the study of the partial density function
$\hat{\rho}_{tk}$ was its application in the study of slope stability
proposed by Richard Thomas and his coworkers \cite{FKPST}. 
Theorem~\ref{thm_3} realizes this proposal by giving a formula for the
slope of a toric subvariety $Y$ in terms of geometric data defined by the
choice of a toric K\"ahler metric on $X$. From this formula, it is obvious that
if $g$ can be chosen to have constant scalar curvature, then $X$ is
slope (semi-)stable with respect to $Y$.

To describe these results more precisely, let us begin by recalling
the relevant definitions.
First of all, the slope $\mu(X) = \mu(X,L)$ of a polarized smooth
projective variety is defined 
in terms of the coefficients of the
Hilbert polynomial
\begin{equation}\label{e1.26.8.11}
\chi(X,L^k) = \sum_i (-1)^i \dim H^i(X,\cO(L^k)) = \dim V_k =  A_0k^n + A_1k^{n-1} + O(k^{n-2})
\end{equation}
as the quotient
\begin{equation}\label{e2.8.8.11}
\mu(X) = \mu(X,L) = \frac{A_1}{A_0}.
\end{equation}
Note that by \eqref{e53.2.10.12}, we have
\begin{equation}\label{e2a.8.8.11}
\mu(X,L) = \frac{1}{2}\Av(s),
\end{equation}
where $\Av(s)$ denotes the average scalar curvature on $X$.
Alternatively, we can define $\mu(X,L)$ by the formula
\begin{equation}\label{e2.26.8.11}
\dim H^0(X, \cO(L^k)) = A_0k^n( 1 + \mu(X,L)k^{-1} + O(k^{-2}))\mbox{
  for }k\gg 0.
\end{equation}

Now let $Z$ be a closed subscheme of $X$ with ideal sheaf $\cI_Z$. Then, if
$t \geq 0$ is such that $tk \in \bZ$, we may consider the holomorphic
Euler characteristic $\chi(X, L^k\otimes \cI_Z^{tk})$.  This will be a
polynomial of total degree $n$ in the two variables $k$ and $tk$ and
can therefore be written in the form
\begin{equation}\label{e3.26.8.11}
\chi(X,L^k\otimes \cI_Z^{tk}) = A_0(t)k^n + A_1(t)k^{n-1} + O(k^{n-2}),
\end{equation}
where $A_i(t)$ is a polynomial of degree at most $n-i$ (and so is defined for
all $t\in \bR$). As previously,
if $t$ is fixed and $k\gg 0$, the higher cohomology groups
vanish.  So we also have
\begin{equation}\label{e4.26.8.11}
\dim \hat{V}_{tk} = A_0(t)k^n + A_1(t)k^{n-1} +
O(k^{n-2})\mbox{ for fixed $t$, and  }k\gg 0,
\end{equation}
where we have written
\begin{equation}\label{e51.26.8.11}
\hat{V}_{tk} = H^0(X,\cO(L^k)\otimes \cI_Z^{tk}).
\end{equation}
Since $A_0(t)$ and $A_1(t)$ are defined for all real $t$, we may
consider the quantity
\begin{equation}\label{e5.26.8.11}
\mu_c(\cI_Z) = \mu_c(\cI_Z,L) = 
	\frac{\int_0^c A_1(t) + \frac{A_0'(t)}{2}\,\rd
        t}{\int_0^c A_0(t)\, \rd t},
\end{equation}
provided the denominator is non-zero: this is called the {\em slope} of $Z$
with respect to $c$.

We can guarantee that $A_0(t)>0$ for $t\in [0, \ve(Z))$, where
$\ve(Z)$ denotes the {\em Seshadri constant} of $Z$. Recall that one of the
equivalent definitions of this quantity is
\begin{equation}\label{e6.26.8.11}
\ve(Z) = \sup \left\{t:\pi^*L\otimes\cO(-t E) \text{ is ample}
	\right\},
\end{equation}
where $\pi:\widehat{X}\to X$ is the blow-up of $X$ along $Z$ and
$E=\pi^{-1}(Z)$ the exceptional divisor.

The {\em slope inequality} for $Z$ with respect to $c$ is:
\begin{equation}\label{e7.26.8.11}
\mu_c(\cI_Z,L) \leq \mu(X,L).
\end{equation}

Following \cite{Thomas_cscK}, one makes the following
\begin{dfn}\label{d1.17.11.11}
\begin{enumerate}

    \item[(i)] $(X,L)$ is said to be \emph{slope semi-stable with respect to $Z$}
  if the slope inequality \eqref{e7.26.8.11} holds for all $c \in
  (0,\ve(Z)]$. 

\item[(ii)] $(X,L)$ is said to be \emph{slope stable with respect to $Z$} if
  we have strict inequality in \eqref{e7.26.8.11} for all $c\in
  (0,\ve(Z))$, and for $c=\ve(Z)$ if $\ve(Z)$ is rational and 
  $H^0(X,\cO(L^k)\otimes \cI_Z^{\ve(Z)k})$ saturates $\cI_Z^{\ve(Z)k}$
  for $k\gg 0$.
\end{enumerate}
\end{dfn}

It is shown in \cite{RT_07} that, if $(X,L)$ is K-semistable, then
$(X,L)$ is slope-semistable with respect to every closed subscheme $Z$ and
that, if $(X,L)$ is (analytically) K-stable, then $(X,L)$ is
slope-stable with respect to every closed subscheme $Z$.  In the light of the
above-mentioned results (cscK implies K-stable, at least if $\Aut(X)$
is discrete) it follows that if there is a cscK metric in $c_1(L)$,
then every closed subscheme $Z$ of $X$ is slope stable. 

Theorem~\ref{t1.5.9.12} can be used to give a formula for
$\mu_c$. Note further that,
with this result, it is now easy to follow the approach of \cite{FKPST} to 
compute the difference $\mu_c(\cI_Y,L) - \mu(X,L)$:

\begin{thm}\label{thm_3}  Let the data be as in
Theorem~\ref{t1.5.9.12}.  Then we have, for $0 < c < \ve(Y)$:
\begin{equation}\label{e1.17.11.11}
\mu_c(\cI_Y,L) - \mu(X,L) = \left(2\int_0^c \Vol(P_t)\,\rd t\right)^{-1}
\left\{\int_0^c
  \left(\int_{U_t}(s-\Av(s))\frac{\omega^n}{n!}\right)\,\rd t
  -\frac{1}{2}\int_{S_c} |\rd \Phi|_g^2\,\rd\sigma_{c} \right\}.
\end{equation}
In particular, if the scalar curvature is constant ($s= \Av(s)$),
we have
\begin{equation}\label{e4.17.11.11}
\mu_c(\cI_Y,L) - \mu(X,L) <0, \mbox{ for }c\in (0,\ve(Y)),
\end{equation}
and $(X,L)$ is slope semi-stable with respect to $Y$.
\end{thm}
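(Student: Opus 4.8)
The plan is to feed the distributional expansion of Theorem~\ref{t1.5.9.12} into the defining formula \eqref{e5.26.8.11} for $\mu_c(\cI_Y,L)$, relating the polynomials $A_0(t), A_1(t)$ to the integrals over $D_t$ that appear in \eqref{e54.5.9.12}. The key observation is that integrating $\hat\rho_{tk}$ over $X$ recovers $\dim \hat V_{tk}$, since $\hat\rho_{tk}$ is the density function of the subspace $\hat V_{tk}\subset V_k$; hence taking $f\equiv 1$ in \eqref{e54.5.9.12} and comparing with \eqref{e4.26.8.11} yields
\begin{equation*}
A_0(t) = \left(\tfrac{1}{2\pi}\right)^n \Vol_g(D_t), \qquad
A_1(t) = \left(\tfrac{1}{2\pi}\right)^n \left( \tfrac12\int_{D_t} s\,\tfrac{\omega^n}{n!} + \tfrac12\langle \hat a_t, 1\rangle \right).
\end{equation*}
First I would make these identifications precise, taking care that $D_t = \Phi^{-1}(t,\infty)$ has smooth boundary $S_t$ for generic $t$ (and all $t$ in the toric setting, by the structure of $\Phi$ recalled before \eqref{three-subsets}), so that the Leray form $\rd\sigma_t$ and the distribution $\hat a_t$ in \eqref{e3.13.9.12} are well-defined. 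I would also record the elementary fact that $\Vol_g(D_t) = \Vol(P)/(2\pi)^n - \Vol(P_t)/(2\pi)^n$ up to the normalization implicit in the momentum map, identifying $\Vol(P_t)$ (the volume of the slice of the polytope where $\Phi < t$, i.e. the image of $U_t$) with the relevant polytope volume, so that $A_0'(t) = -(2\pi)^{-n}\,\tfrac{\rd}{\rd t}\Vol_g(D_t)$.

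Next I would substitute into the numerator and denominator of \eqref{e5.26.8.11}. The denominator becomes $\int_0^c A_0(t)\,\rd t$, which after the identification above is (a constant times) $\int_0^c \Vol_g(D_t)\,\rd t$; subtracting the $t$-independent piece and using $\int_0^c\Vol(P)\,\rd t - \int_0^c\Vol_g(D_t)\,\rd t = \int_0^c\Vol(P_t)\,\rd t$ produces the factor $2\int_0^c\Vol(P_t)\,\rd t$ in \eqref{e1.17.11.11} once the overall $\frac12$ from $\mu$ is accounted for. For the numerator $\int_0^c (A_1(t) + \tfrac12 A_0'(t))\,\rd t$, I would plug in the formula for $A_1(t)$, write $\int_{D_t} s\,\tfrac{\omega^n}{n!} = \int_X s\,\tfrac{\omega^n}{n!} - \int_{U_t} s\,\tfrac{\omega^n}{n!}$, and use \eqref{e3.13.9.12} with $f\equiv 1$ to evaluate $\langle\hat a_t,1\rangle = \Area_{\sigma_t}(S_t) - \tfrac12\tfrac{\rd}{\rd t}\int_{S_t}|\rd\Phi|_g^2\,\rd\sigma_t$. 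The terms $\tfrac12 A_0'(t)$ and the $\Area(S_t)$ contribution from $\hat a_t$ should cancel against the $t$-derivative of the volume (this is the Leray-form identity $\rd\sigma_t\,\rd\Phi = \omega^n/n!$ differentiated in $t$), and after subtracting $\mu(X,L) = \tfrac12\Av(s)$ times the denominator — which converts the bare $\int_X s$ into $-\int_{U_t}\Av(s)$ via $\int_X s\,\tfrac{\omega^n}{n!} - \Av(s)\Vol = 0$ on $D_t$ but not on $U_t$ — one is left with $\int_0^c(\int_{U_t}(s-\Av(s))\tfrac{\omega^n}{n!})\,\rd t$ plus a boundary term $-\tfrac12\int_{S_c}|\rd\Phi|_g^2\,\rd\sigma_c$ coming from integrating $\tfrac{\rd}{\rd t}\int_{S_t}|\rd\Phi|_g^2\,\rd\sigma_t$ from $0$ to $c$ (the contribution at $t=0$ vanishing since $S_0 = Y$ has real codimension $\geq 2$ — here one uses that $\Phi$ vanishes quadratically along $Y$, Lemma~\ref{l1.17.11.11}). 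This reorganization gives exactly \eqref{e1.17.11.11}.

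For the semistability conclusion \eqref{e4.17.11.11}: when $s = \Av(s)$ the first bracketed integral vanishes identically, so
\begin{equation*}
\mu_c(\cI_Y,L) - \mu(X,L) = -\left(2\int_0^c\Vol(P_t)\,\rd t\right)^{-1}\cdot \tfrac12\int_{S_c}|\rd\Phi|_g^2\,\rd\sigma_c,
\end{equation*}
and I would argue this is strictly negative: the denominator is positive because $P_t$ has nonempty interior for $t>0$ small (as $\Phi$ is a nonconstant nonnegative affine function on $P$), and the integrand $|\rd\Phi|_g^2$ is nonnegative and positive on a dense subset of $S_c$ — in fact strictly positive wherever $S_c$ is a smooth hypersurface, since $\rd\Phi$ cannot vanish identically there — so $\int_{S_c}|\rd\Phi|_g^2\,\rd\sigma_c > 0$. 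Hence \eqref{e4.17.11.11} holds for all $c\in(0,\ve(Y))$, which is precisely slope semistability with respect to $Y$ in the sense of Definition~\ref{d1.17.11.11}(i) (the inequality being strict, it extends to the closed endpoint by continuity if desired). The main obstacle I anticipate is bookkeeping: matching the several normalization constants — the $(k/2\pi)^n$ and $(1/2\pi)^n$ factors, the $1/n!$ in the Leray form, the factor $\tfrac12$ in $\mu(X,L) = \tfrac12\Av(s)$, and the convention relating $\Vol_g$ to polytope volume — and verifying that the boundary term at $t=0$ genuinely vanishes, which requires the quadratic vanishing of $\Phi$ along $Y$ to control $|\rd\Phi|_g^2\,\rd\sigma_t$ as $t\to 0^+$. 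The analytic content is already packaged in Theorem~\ref{t1.5.9.12}; what remains is careful calculus with the Leray coarea formula.
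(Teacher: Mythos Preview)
Your proposal is correct and follows essentially the same route as the paper: take $f\equiv 1$ in Theorem~\ref{t1.5.9.12} to read off $A_0(t)$ and $A_1(t)$, split $s = (s-\Av(s)) + \Av(s)$, integrate $\langle \hat a_t,1\rangle$ using the Leray coarea identity, and kill the boundary contribution at $t\to 0^+$ via the quadratic vanishing of $\Phi$ along $Y$ (Lemma~\ref{l1.17.11.11}); the paper also invokes continuity of $c\mapsto \mu_c(\cI_Y,L)$ on $(0,\ve(Y)]$ to pass from the strict inequality on the open interval to semi-stability at the endpoint, exactly as you suggest. One bookkeeping correction: in this paper $P_t = P\cap\{\Phi\ge t\}$ is the image of $D_t$, \emph{not} of $U_t$, so your parenthetical identification of $\Vol(P_t)$ with the region $\{\Phi<t\}$ is reversed---but you have already (rightly) flagged the normalization tracking as the main hazard, and once that convention is fixed the rest of your argument goes through verbatim.
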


\begin{proof}
The function $1$ on $X$ is a test function and so we may pair the
distributional asymptotic expansion with it.  The integrals of the
local terms then give the coefficients $A_0(t)$ and $A_1(t)$:
\begin{equation}\label{e3.6.11.11}
A_0(t) = \left(\frac{1}{2\pi}\right)^n \Vol(U_t),\;\;
A_1(t) = \frac{1}{2}\left(\frac{1}{2\pi}\right)^n\left(\int_{U_t}s\, \frac{\omega^n}{n!} +
  \hat{A}_t\right),
\end{equation}
where we have written $\hat{A}_t$ for $\langle
\hat{a}_t,1\rangle$. Writing $s = (s-\Av(s)) + \Av(s)$ and
recalling \eqref{e2.8.8.11}, we obtain
\begin{equation}
A_1(t) = \left( \frac{1}{2\pi}\right)^n
\left( \Vol(U_t)\mu(X,L) + \frac{1}{2}\int_{U_t} (s-\Av(s))\frac{\omega^n}{n!} + \frac{1}{2}\hat{A}_t\right).
\end{equation}
Now
\begin{equation}
\hat{A}_t = \int_{S_t}\rd \sigma_{t} - \frac{1}{2}\frac{\rd}{\rd t} \int_{S_t}
|\rd \Phi|^2_g\,\rd \sigma_{t}
\end{equation}
and by definition of $\rd\sigma_t$, 
$$
\int_0^c\int_{S_t} \rd \sigma_{t} \rd t = \Vol(X) - \Vol(U_c).
$$
For $\epsilon>0$,
$$
\int_{\epsilon}^c \left( \frac{\rd}{\rd t}\int_{S_t}|\rd
  \Phi|^2\,\rd\sigma_{S_t}\right)\rd t = \int_{S_c} |\rd\Phi|_g^2\,\rd\sigma_{c} -
  \int_{S_{\epsilon}}     |\rd\Phi|_g^2\,\rd\sigma_{S_\epsilon}.
$$
Now $|\rd\Phi|^2\rd\sigma_t$ is smooth and tends to zero near
$Y$---its length with respect to $g$ is $|\rd \Phi|_g$ and $\Phi$ is
quadratic in the distance to $Y$.  It follows that
the integral over $S_{\epsilon}$  tends to zero with $\epsilon$ since $S_0 = Y$.  
The result follows by substitution of these formulae into
\eqref{e5.26.8.11}.  Indeed, the numerator of \eqref{e5.26.8.11} is
\begin{align}
\int_0^cA_1(t)\,\rd t + \frac{1}{2}(A_0(c) - A_0(0)) &=
\left( \frac{1}{2\pi}\right)^n\mu(X,L)\int_0^c \Vol(U_t)\,\rd t
\nonumber \\
&+ \frac{1}{2}\left( \frac{1}{2\pi}\right)^n\left(
\int_0^c\left(\int_{U_t} (s-\Av(s))\frac{\omega^n}{n!}
\right)\,\rd t - \frac{1}{2}\int_{S_c} |\rd\Phi|^2\,\rd \sigma_{c}\right).
\end{align}
Dividing by $\int_0^c A_0(t)\,\rd t$ as given in \eqref{e3.6.11.11}
and rearranging, we obtain the formula stated in the the theorem.
We obtain the slope semi-stability of $(X, L)$ with respect to $Y$ 
from the strict inequality \eqref{e4.17.11.11} by noting that
$c\mapsto\mu_c(\cI_Y, L)$ is continuous for $c\in (0, \ve(Y)]$.
\end{proof}

\subsection{More general partial density functions and K-stability}
\label{s_general}
There is a generalization of Theorem~\ref{t1.5.9.12} to partial density
functions associated with more general subspaces $\hat{V}_{tk}$ of
$V_k$.  We shall describe the set-up we have in mind in the toric
case.

Let $X$ correspond to the polytope $P$ as above, and suppose that we
have 1-parameter family of subpolytopes $P_t$ of $P$.  More
precisely, suppose that
\begin{equation}\label{e7.13.9.12}
P_t = P \cap \bigcap_{a\in A} \{\Phi_a(x) \geq t\}
\end{equation}
where the $\Phi_a$ are affine-linear functions on $\bR^n$ with
rational coefficients and $A$ is some finite index set.

We assume that $P_0 = P$ and are
interested in values of $t$ for which $P_t$ is an
$n$-dimensional polytope strictly smaller than $P$.

Because the coefficients of the $\Phi_a$ are rational, there is a
positive integer $N$ such that $kP_t$ is an integral polytope for all
positive integers $k$ divisible by $N$.  For such $k$, it therefore
makes sense to consider the subspace $\hat{V}_{tk}$ of holomorphic
sections of $L^k$ corresponding to the points of $(k^{-1}\bZ)^n \cap P_t$.
This subspace defines a partial density function $\hat{\rho}_{tk}$ as
before. Theorem~\ref{t1.9.9.12} below extends
Theorem~\ref{t1.5.9.12} to this more general setting.  We defer the
precise statement, but note here that the picture is similar to the
one we see in
Theorem~\ref{t1.5.9.12}: there is a tubular neighbourhood $D_t$ of a
reducible subvariety of $X$, with boundary $S_t$, such that
$\hat{\rho}_{tk}(x)$ is rapidly decreasing in $k$ if $x\in D_t$,
$\hat{\rho}_{tk}(x)$ is essentially equal to $\rho_k(x)$ if $x\not\in
\overline{D_t}$, and there is a distributional expansion of
$\hat{\rho}_{tk}$ with an explicit contribution $\hat{a}_t$ supported on $S_t$.
The set $S_t$ is now no longer smooth, however (it has singularities
at points mapping to intersection points of two or more of the
hyperplanes $\Phi_a(x)=t$) and there is an additional term in
$\hat{a}_t$ which is supported on the singular locus of $S_t$.

Just as Theorem~\ref{t1.5.9.12} gives information about
slope-stability if the metric is of constant scalar curvature, so
Theorem~\ref{t1.9.9.12} gives information about K-stability.  Indeed,
an analogue of Theorem~\ref{thm_3} is Theorem~\ref{t1.25.2.12} which
gives a formula for the Donaldson--Futaki invariant of a toric test
configuration in terms of the distributional expansion of
$\hat{\rho}_{tk}$.   This formula immediately implies that if the
metric has constant scalar curvature, then $(X,L)$ is K-polystable
with respect to every toric test configuration.  
 This result was previously
proved in \cite{MR2407096} without the use of density functions.

\begin{rmk}
Theorem~\ref{t1.9.9.12} can also be used to give a formula for
the slope $\mu_c(\cI_Z,L)$ for more general ideals $\cI_Z$.  We have
omitted an explicit treatment of this, however, because the
application to K-stability seems to be more interesting.
\end{rmk}
\subsection{Relation to previous work}
\label{relation}
Asymptotic
expansions of what we are calling partial density functions were
studied in detail by Shiffman and Zelditch \cite{MR2015330}.
Their point of view was that of random polynomials with prescribed
Newton polytope, and the partial density functions then appear as
`conditional expectations'.  Our results on the distributional
expansion of $\hat{\rho}_{tk}$ go beyond those of \cite{MR2015330}, 
by giving information about $\hat{\rho}_{tk}$ at
the interface $S_t$ between $U_t$ and $D_t$.  On the other hand, the
results of Shiffman and Zelditch in the interior of these regions are
much more precise than ours.   We mention also that these authors deal
only with the case that $X = \bC P^n$ with the Fubini-Study metric
(though the extension to general toric metrics is probably
straightforward) and
that their methods are completely different from ours, the starting
point being the description of the Szeg\"o kernel as a Fourier integral
operator with complex phase.  By contrast, our methods are
elementary and explicit.

Moving away from the toric case,  Berman \cite{Berman_2009}
announced that in general, given a complex submanifold $Y \subset X$
and with $\hat{V}_{tk}$ defined as in \eqref{e11.12.9.12}, there
exist open subsets $D_t$ and $U_t$ of $X$ satisfying the conditions
\eqref{e3.21.9.12} and \eqref{e4.21.9.12} of Theorem~\ref{t1.5.9.12}.
However, in this generality, there is no 
information about the smoothness of $\del D_t$ nor about the
`transition behaviour' of $\hat{\rho}_{tk}$ near $\del D_t$.

Finally we note that this work grew out of the first author's
Edinburgh PhD thesis \cite{Thesis_Florian} which contains further pointwise information
about the asymptotic expansion of toric partial density functions. 

\subsection{Outline}

The remainder of this paper is organised as follows. In
\S\ref{s_back}, we collect some standard notions from toric geometry.
The key to our subsequent analysis is the formula \eqref{e1.10.8.11},
which expresses the mass-density of a unit-norm basis element
$e_{\alpha,k}$ of $L^k$ in terms of a function $\phi(\alpha,y)$
derived in simple fashion from the symplectic potential $u$ which
defines our K\"ahler structure.

In \S\ref{setup}, we use Laplace's method to compute a distributional
asymptotic expansion of $|e_{\alpha,k}|^2$, following closely the
approach of \cite{Burns}. The key results here are
Propositions~\ref{p2.24.8.11}, \ref{p3.24.8.11}, and
\ref{p4.24.8.11}.  In \S\ref{sec_asymp}, these results are combined
with the Euler--Maclaurin formula for (lattice) polytopes to obtain
Theorem~\ref{t1.5.9.12}.

In \S\ref{Kstab} we shift attention to the more general partial
density functions mentioned in \S\ref{s_general}.  The method used to
obtain the distributional asymptotic expansion in this case is the
same as that followed in \S\ref{sec_asymp}:  it is, however, technically more complicated to
obtain a nice formula for the distributional term $\hat{a}_t$ in this more general case. The remainder of the
paper is devoted to the application of Theorem~\ref{t1.9.9.12} to
obtain a formula for the Donaldson--Futaki invariant of a toric test
configuration and to deduce that cscK implies K-polystable with
respect to such toric test configurations.

\subsection{Acknowledgement}
We thank  Julius Ross, Richard Thomas
and Steve Zelditch for useful conversations.   The second author was supported by a Leverhulme
Research Fellowship while this work was being completed.

\section{Background}
\label{s_back}
\label{sec_toric}
We review very briefly the elements we shall need of toric geometry,
referring the reader to \cite{Fulton_1993, Guillemin_1994,Abreu_1998,Abreu_2003,Burns} for more details.

\subsection{Combinatorial description of toric varieties}
\label{S_comb_toric}
First of all, we recall the correspondence between smooth projective
polarized toric varieties $(X,L)$ on the one hand and integral Delzant
polytopes on the other.  Thus we suppose that $X$ is a smooth (connected)
projective variety of complex dimension $n$ and that $X$
contains a dense open subset $X^o$ isomorphic to a complex $n$-torus
$T^n_c$.  We suppose further that the standard action of $T^n_c$ on
itself extends to a holomorphic action of $T^n_c$ on $X$.

Now suppose that $T^n \subset T^n_c$ is a compact real torus.  We are
interested in K\"ahler structures on $X$ that are invariant under
$T^n$, so that $T^n$ acts by isometries of the K\"ahler metric $g$.
As the action is holomorphic, the K\"ahler form $\omega$ is then
automatically $T^n$-invariant.

In this setting there is a moment map $\mu : X \to \frt^*$ (where $\frt^*$ is the dual
of the Lie algebra of $T^n$ which is isomorphic to $\bR^n$), and the image $P =
\mu(X)$ is a convex compact polytope, the convex hull of the images of
the fixed-points of the $T^n$-action. The restriction of $\mu$ to $X^o$ is
a fibration with image the interior $\Int(P)$ of $P$, with fibre
$T^n$.

The Lie algebra $\frt$ of $T$ contains the weight lattice $\Lambda =
\Ker(\exp)$ and correspondingly $\frt^*$ contains the coweight lattice
$\Lambda^*$. 

The condition that $X$ is smooth and has a given polarization $L$ translates into the condition that
$P$ is an {\em integral Delzant polytope}:
\begin{dfn}
    A convex polytope $P\subset \mathfrak{t}^*$ is \emph{Delzant} if
    \begin{enumerate}
        \item There are $n$ edges meeting in each vertex $v$.
        \item The edges meeting in the vertex $v$ are rational; i.e., each edge
          is of the form $v+te_i$, with $t\ge0$, $t\in \bR$ and $e_i\in \Lambda^*$.
        \item The $e_1,\dots,e_n$ in (2) can be chosen to form a basis of
          $\Lambda^*$.
    \end{enumerate}
    An \emph{integral Delzant polytope} in $\frt^*$ is a Delzant polytope whose
    vertices lie in $\Lambda^*$.
\end{dfn}

If $P$ is a Delzant polytope, we may write $P$ as the intersection $H_1
\cap \cdots \cap H_d$ of a finite number of affine half-spaces and we
may assume that for each $a$,
$$
H_a = \{ x \in \frt^* : \ell_a(x):= \langle x,\nu_a\rangle - \lambda_a \geq 0\},
$$
where $\nu_a\in \Lambda$ is primitive.  The intersection of the
boundary of $H_a$ with $P$ defines a codimension-1 face or {\em facet}
of $P$, which will be denoted $Q_a$:
\begin{equation}\label{e22.18.8.11}
Q_a = \{  x \in \frt^* : \ell_a(x) = 0\}\cap P.
\end{equation}
$P$ is integral if and only if all the $\lambda_a$ are integers.

More generally, if $Q$ is any face of codimension $q$ of $P$, there will be a
subset $\{a_1,\ldots, a_q\} \subset \{1,\ldots,d\}$ such that
\begin{equation}\label{e3.24.8.11}
Q = Q_{a_1}\cap \cdots \cap Q_{a_q}.
\end{equation}
Note that the conormal space $N^*Q$ (that is, the annihilator in $T^*\bR^n$
of $TQ$) is just the span of  $\{\nu_{a_1},\ldots,\nu_{a_q}\}$ (or
equivalently of the $\{\rd\ell_{a_1},\ldots \rd\ell_{a_q}\}$.

\subsubsection{Leray forms}
\label{s_leray}
\begin{dfn} Let $Q_a$ be a facet of $P$. The {\em Leray form} $\rd
  \sigma_a$ of $Q_a$ is the $(n-1)$-form on $Q_a$ with the property
  that $\rd \sigma_a\rd \ell_a = \rd x$ (Lebesgue measure) on $Q_a$. 
\label{e1.14.9.11}
Let $Q_{ab} = Q_a\cap Q_b$ be a codimension-2 face of $P$. The Leray
form $\rd \tau_{ab}$ of $Q_{ab}$ is similarly defined to be the
$(n-2)$-form on $Q_{ab}$ such that $\rd \tau_{ab}\rd \ell_a \rd \ell_b
= \rd x$.
\end{dfn}

In order to keep our notation short, we shall denote by $\rd \sigma$
the measure on $\partial P$ whose restriction to the relative interior of $Q_a$ is
$\rd\sigma_a$ and by $\nu$ the almost-everywhere defined section of
$T^*\bR^n$ such that $\nu$ is the conormal to $Q_a$ on its relative interior.
The measure $\rd \tau$ with support on the $(n-2)$-skeleton of
$\partial P$ is defined in the analogous way.

In \S\ref{Kstab}, we shall need to consider polytopes which are
not simple (so that more than $n$ facets can come together in a
vertex). Note that for any convex polytope, however, every codimension-2 face
is always the intersection of just 2 facets and so the Leray form
$\rd\tau$ is still well-defined in this case.  If the polytope is {\em
  simple}, then every face of codimension $q$ is the intersection of
precisely $q$ facets, and so has a well-defined Leray form.

\subsubsection{Adapted coordinates}

\begin{dfn}
If $p \in P$, there is a unique face $F$ of $P$ which contains $p$ in
its relative interior.  This {\em relative interior} will be denoted
by $F_p$.
\end{dfn}

In particular, $p\in F_p$.  The two extreme cases are $F_p = \Int(P)$
if $p$ is an interior point of $P$, and $F_p = \{p\}$ if $p$ is a
vertex of $P$.

If $p \in P$,  {\em adapted coordinates centred at $p$} will mean a
choice of affine coordinates $x$ on $\bR^n$ such that 
\begin{itemize}
\item $F_p$ is an open subset of $\{x_1 = x_2 = \cdots = x_q = 0\}$;
\item $x_j\geq 0$ on $P$ if $j=1,\ldots, q$;
\item the point $p$ corresponds to $x=0$;
\item Lebesgue measure on $\bR^n$ is given by $\rd x_1\cdots \rd x_n$.
\end{itemize}
It is clear that such coordinates always exist: if $F_p$ is the
relative interior of the face $Q$ in \eqref{e3.24.8.11}, then we take $x_j = \ell_{a_j}$ for $j=1,\ldots q$ and choose
the remaining coordinates so as to satisfy the remaining conditions.
Then the Leray form of $F_p$ is just $\rd x_{q+1}\cdots \rd x_{n}$.
In the case of a Delzant polytope, these coordinates can be chosen so
that $\Lambda^*$ is identified with the standard lattice $\bZ^n$, (in
other words so that the change of coordinates lies in
$SL(n,\bZ)$). The fact that $P$ can be covered by a finite system of adapted
coordinate charts will be useful in the next section.

Note that an adapted coordinate chart gives rise to a smooth system of
(local) coordinates on $X$ in the following way.  Letting
$(\theta_1,\ldots,\theta_n)$ be angular variables dual to the
coordinates $(x_1,\ldots, x_n)$ (i.e. the $\theta_j$ give coordinates
on $\frt$), then the real and imaginary parts of
$\sqrt{x_j}e^{i\theta_j}$, for $j=1,\ldots, q$ extend to be smooth
near $\mu^{-1}(F_p)$; and $(x_j,\theta_j)$ for $j=q+1,\ldots,n$ also
lift to be smooth functions near $\mu^{-1}(F_p)$.

Hence we have the following result:

\begin{lem}\label{l1.17.11.11}
Let $P$ be an integral Delzant polytope as before and let $\ell$ be
the defining function of a facet $Q$.  Then $\mu^*(\ell)$ is smooth on
$X$ and
vanishes quadratically on $Y= \mu^{-1}(Q)$ and is positive elsewhere
on $X$.

More generally, if a face $Q$ of $P$ is defined by the subspace $x_1=
\cdots = x_q=0$, the $x_j$ being $\geq 0$ elsewhere on $P$, then 
$\mu^*(x_1+\cdots + x_q)$ is smooth on $X$, vanishes quadratically on $\mu^{-1}(Q)$ and
is positive elsewhere on $X$.
\end{lem}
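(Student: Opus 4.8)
The plan is to reduce everything to a local computation in the adapted coordinate charts described just above the statement, using the explicit relation between the affine coordinates $x_j$ on the polytope and the smooth local holomorphic coordinates on $X$. First I would fix a point $p \in Y = \mu^{-1}(Q)$ and choose adapted coordinates $x = (x_1,\ldots,x_n)$ centred at $p$, so that $Q$ is (the closure of) $\{x_1 = \cdots = x_q = 0\}$, with $x_j \geq 0$ on $P$ for $j = 1,\ldots,q$ and $p \leftrightarrow x = 0$. By the Delzant condition we may take the change of coordinates to lie in $\mathrm{SL}(n,\bZ)$, identifying $\Lambda^*$ with $\bZ^n$. The point of the adapted-coordinate discussion is precisely that, introducing angular variables $\theta_j$ dual to $x_j$, the functions $u_j := \sqrt{x_j}\,e^{i\theta_j}$ (real and imaginary parts) for $j = 1,\ldots,q$, together with $(x_j,\theta_j)$ for $j = q+1,\ldots,n$, give a smooth coordinate system on $X$ near $\mu^{-1}(F_p)$ — and $Y \cap (\text{chart}) = \{u_1 = \cdots = u_q = 0\}$.

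The key step is then the simple observation that $x_j = |u_j|^2$ in these coordinates. Hence for a single facet $Q = Q_a$ with defining function $\ell = \ell_a$: in the chart where $x_1 = \ell_a$, we have $\mu^*(\ell_a) = |u_1|^2$, which is manifestly smooth and vanishes to exactly second order along $\{u_1 = 0\} = Y$, and is strictly positive off $Y$ (on this chart). For a general face $Q$ with $x_1 = \cdots = x_q = 0$ the pullback of $\Phi := x_1 + \cdots + x_q$ is $|u_1|^2 + \cdots + |u_q|^2$, again smooth, vanishing quadratically on $\mu^{-1}(Q) = \{u_1 = \cdots = u_q = 0\}$ and positive elsewhere on the chart. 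To globalize, I would note that $X$ is covered by finitely many such adapted charts: away from $\mu^{-1}(F_p)$-type strata the function $\Phi$ is a sum of the $\mu^*(x_j)$, each of which is a nonnegative smooth function (being, up to an affine constant absorbed in the choice of coordinates, the pullback of a facet-defining function or a linear combination thereof, hence a component of the moment map, which is smooth on all of $X$). Since $\Phi$ is everywhere $\geq 0$, its zero set is exactly $\bigcap_j \mu^{-1}(\{x_j = 0\}) = \mu^{-1}(Q)$, and positivity off $\mu^{-1}(Q)$ follows. Smoothness is a local statement, already checked in each chart.

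The one point requiring a little care — and the only real obstacle — is the quadratic vanishing claim at points of $\mu^{-1}(Q)$ that are \emph{not} in the open stratum $\mu^{-1}(F_p)$, i.e. where $p$ lies in a proper subface of $Q$ and additional coordinates $x_{q+1}, x_{q+2}, \ldots$ also vanish. There the correct adapted chart has complex coordinates $u_j = \sqrt{x_j}e^{i\theta_j}$ for a larger index range, say $j = 1,\ldots,q'$ with $q' > q$; but the relation $x_j = |u_j|^2$ still holds for each such $j$, so $\mu^*(\Phi) = \sum_{j=1}^{q} |u_j|^2$ is still a sum of squares of a subset of the coordinate functions, hence still vanishes to exactly second order along $\{u_1 = \cdots = u_q = 0\} \supseteq \{u_1 = \cdots = u_{q'} = 0\}$. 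One should check that $\{u_1 = \cdots = u_q = 0\}$ in this chart is indeed $\mu^{-1}(Q)$ and not something smaller, which follows because $\mu$ in these coordinates sends $(u,\theta,x_{\text{rest}})$ to $(|u_1|^2,\ldots,|u_{q'}|^2,x_{\text{rest}})$ and $Q = \{x_1 = \cdots = x_q = 0\} \cap P$. With this verified, the Hessian of $\mu^*(\Phi)$ transverse to $Y$ is (a positive multiple of) the identity in the $u_1,\ldots,u_q$ directions, giving the asserted quadratic vanishing, and the proof is complete. $\hfill\square$
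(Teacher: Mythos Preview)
Your proposal is correct and follows exactly the approach the paper intends: the paper does not give a separate proof, but simply writes ``Hence we have the following result'' immediately after introducing the adapted coordinates $u_j = \sqrt{x_j}e^{i\theta_j}$, so the lemma is meant to follow directly from the identity $x_j = |u_j|^2$ in these charts. Your write-up is in fact more careful than the paper's, particularly in treating points of $\mu^{-1}(Q)$ lying in deeper strata.
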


\subsubsection{Lattice points and holomorphic sections}

Two Delzant polytopes $P$ and $P'$ 
determine isomorphic toric varieties if they are combinatorially the
same and 
the set of normals to the facets of $P$ is the same as the set of
normals to the facets of $P'$. The polytope itself fixes in addition a
K\"ahler cohomology class $[\omega_P] \in H^2(X,\bR)$ which is integral if and only if the
polytope is integral\footnote{To be more precise, we should say
  that $[\omega_P]$ is integral iff $\mu$ (which is only determined up
  to an additive constant) can be chosen to make $P$ integral.}.  In this case, there is a $T^n_c$-invariant
holomorphic line bundle $L=L_P$ on $X$ such that $c_1(L) = [\omega_P]$ and
it is well known that the $T^n_c$-equivariant sections of $L$ are in
one-one correspondence with the points of $P\cap \Lambda^*$ and form a
basis of $H^0(X,\cO(L))$.  Replacing $L$ by $L^k$ corresponds to
replacing $\Lambda^*$ by the rescaled lattice
\begin{equation}
\Lambda^*_k = \{ y \in \frt^* : ky \in \Lambda^*\}
\end{equation}
so that there is a basis of sections $s_{\alpha,k}$ of
$H^0(X,\cO(L^k))$ indexed by $\alpha \in P\cap\Lambda^*_k$.
(Alternatively, we can think of the lattice as fixed and replace the
polytope $P$ by the dilated polytope $kP$ to get this basis of
sections.)

It is worth recalling that
\begin{equation}\label{e1.7.9.12}
s_{\alpha,k}(y) \neq 0 \Leftrightarrow \alpha \in \overline{F_{y}}
\end{equation}
and that the morphism $X \to \bP H^0(X,\cO(L^k))$ defined by this
basis of holomorphic sections is an embedding.

Let $F$ be a face of $P$, of codimension $q$. Then $Y=\mu^{-1}(F)$
is a toric subvariety of $X$, and conversely any irreducible toric subvariety of $X$ is equal to
$\mu^{-1}(F)$ for some face $Q$.  If $F$ is written as in
\eqref{e3.24.8.11}, then the normals $\nu_{a_1},\ldots, \nu_{a_q} \in
\Lambda$ generate a subtorus $T_F$ of $T^n$.  This is the stabilizer
of $Y$ which is toric with respect to the quotient torus $T^n/T_Q$.

\subsubsection{Seshadri constant}  If $Y$ is the subvariety
corresponding to the face $F$ of $P$, with $F$ defined as usual by the condition
$\Phi(x):=x_1+ \cdots + x_q=0$ in adapted coordinates, then from the definition \eqref{e6.26.8.11},
the Seshadri constant $\varepsilon(Y)$ of $Y$ is given by
\begin{equation}\label{e1.3.10.12}
\varepsilon(Y) = \sup\{t>0 : \Phi(x) = t\mbox{ contains no vertex of }P\}.
\end{equation}

\subsection{Toric K\"ahler metrics}
\label{s_met}
A choice of toric K\"ahler structure on $X$ adapted to the given
polarization $L$ corresponds to choosing a
{\em symplectic potential } $u$ on $P$ (see \cite{Abreu_2003}). Thus $u: P \to \bR$ is a
strictly convex function, smooth in the interior and satisfying the
boundary condition
\begin{equation}\label{e21.18.8.11}
u(x) - \frac{1}{2}\sum_{a=1}^d \ell_a(x)\log \ell_a(x) \in C^\infty(P)
\end{equation}
(i.e. this difference is smooth up to the boundary of $P$).
Given such a symplectic potential, set
\begin{equation}\label{e1.30.9.12}
H = \Hess(u), \mbox{ in other words } H_{ij} = \del_i\del_j u,
\end{equation}
and
\begin{equation}\label{e2.30.9.12}
G = H^{-1}.
\end{equation}
In terms of these matrices, the K\"ahler structure over
$X^o = \mu^{-1}(\Int(P))$ is given by
\begin{equation}\label{e2.7.9.12}
\omega = \rd x_j \wedge \rd \theta_j,\;\;
g = H_{ij}\rd x_i \rd x_j + G^{ij}\rd \theta_i \rd \theta_j,
\end{equation}
and the boundary condition \eqref{e21.18.8.11} ensures that this
extends smoothly to $X$.

Although $u$ is not smooth up to the boundary of $P$, the restriction
$u_F$ of $u$ to any face $F$ of $P$ is well-defined by
\eqref{e21.18.8.11}.  As part of the condition of `strict convexity', $u_F$ 
is required to be strictly convex and smooth in the interior of $F$
and to satisfy the analogous boundary conditions.  In fact, $u_F$ is
the symplectic potential for the restriction of the K\"ahler structure
to the toric submanifold $\mu^{-1}(F)$ of $X$.

The function 
\begin{equation}\label{e23.18.8.11}
u_0(x) = \frac{1}{2}\sum_{a=1}^d \ell_a(x)\log \ell_a(x)
\end{equation}
is strictly convex in $P$ and clearly satisfies
\eqref{e21.18.8.11}. This symplectic potential 
gives a special choice of toric K\"ahler structure on $X$ called the
Guillemin metric on $X$ \cite{Guillemin_1994}.

Note that the addition of an affine-linear function of $x$ to $u$ does
not affect the metric.  It does, however, affect the metric on the
line bundle whose curvature is the K\"ahler structure
\eqref{e2.7.9.12}.

\begin{dfn} \label{d1.13.9.12}
Denote by $s_{\alpha,k}$ a choice of section corresponding to the lattice
point $\alpha$, normalized so that the maximum value of
$|s_{\alpha,k}(y)|^2$ is equal to $1$.  
\end{dfn}
For each $\alpha$ and $k$, $s_{\alpha,k}$ is thus defined up to
multiplication by a unit complex number.

Following \cite{Burns}, define
\begin{equation}\label{e11.9.9.12}
\phi : P\times \Int(P) \longrightarrow \bR,\;\;
\phi(x,y) = 2(u(x) - u(y) - \langle \rd u(y),x-y\rangle).
\end{equation}

Then we have the key formula \cite{Burns, MR2394541}
\begin{equation}\label{e5.7.9.12}
|s_{\alpha,k}(y)|^2 = e^{-k\phi(\alpha,y)}
\end{equation}
for a section $s_{\alpha,k}$ normalized according to
Definition~\ref{d1.13.9.12}.

We note that for fixed $y$, $x\mapsto \phi(x,y)$ differs from $u(x)$
by an affine function of $x$.  In particular, it is strictly convex.
We also have
\begin{equation}\label{e1.13.9.12}
\phi(x,x) = 0,\; \nabla_x\phi(x,y)=0\mbox{ if }x=y\mbox{ and
}\nabla_y\phi(x,y) =0\mbox{ if }y=x.
\end{equation}
It follows from the convexity in $x$ that $\phi(x,y)\geq 0$ with
equality if and only if $x=y$, at least for $y\in \Int(P)$.

The $s_{\alpha,k}$ are automatically mutually orthogonal with respect
to the $L^2$ inner product, and so rescaling by the length of
$s_{\alpha,k}$ we obtain 
an $L^2$-orthonormal basis of sections $e_{\alpha,k}$, satisfying
\begin{equation}\label{e1.10.8.11}
|e_{\alpha,k}(y)|^2 = \frac{e^{-k\phi(\alpha,y)}}{(2\pi)^n\int_P
 e^{-k\phi(\alpha,z)}\,\rd z}.
\end{equation}
We shall refer to $|e_{\alpha,k}|^2$ as the {\em mass-density} of $e_{\alpha,k}$.

\begin{rmk}
The formula \eqref{e5.7.9.12} continues to be valid, with a suitable
extension of the definition of $\phi$, when $y\in \partial P$. This requires
some care: indeed, we see that for \eqref{e1.7.9.12} and 
\eqref{e5.7.9.12} to remain consistent,  we need
to define  $\phi(\alpha,y) = +\infty$ if $\alpha\not\in \overline{F_y}$.
\end{rmk}

\section{Asymptotic expansion of the mass-density function}
\label{setup}

The goal of this section is to obtain the large-$k$ asymptotic expansion of the quantity
\begin{equation}\label{e11.7.9.12}
\langle |e_{\alpha,k}|^2, f\rangle =
\frac{\int_P e^{-k\phi(\alpha,y)}f(y)\,\rd y}
{\int_P e^{-k\phi(\alpha,y)}\,\rd y},
\end{equation}
where $e_{\alpha,k}$ and $\phi$ are as in \eqref{e11.7.9.12} and \eqref{e1.10.8.11} and
$f$ is any smooth function on $P$.

We shall use Laplace's method for this, but this entails an
understanding of the critical points and some other global properties
of the function $y\mapsto \phi(\alpha,y)$ for fixed $\alpha$.  The
analysis is straightforward if $\alpha$ is an interior point of $P$
but a bit more complicated if $\alpha$ lies on the boundary.

We follow the argument of \cite{Burns} closely here.   We have
nonetheless provided the details, 
because their discussion applies only to the
Guillemin metric on $X$; and on the other hand Sena-Dias \cite{Dias}
provided the extension to general toric metrics but did not fully
analyze the situation at the boundary.

The following will be used in this section (and the rest of the paper):
\begin{itemize}
\item $H$  is the Hessian $\del_i\del_ju$ of $u$, and $G = H^{-1}$;
\item a Euclidean structure is fixed on $\bR^n$, the length of a
  vector $v$ being denoted by $|v|$;
\item we denote by $\|f\|_{r}$ the $C^r$-norm defined by our given
  Euclidean structure. 
\end{itemize}

\subsection{Properties of $u$ and $\phi$}

We begin with a statement of the properties of $H$ and $G$ that will
be needed later.

\begin{lem}\label{l1.28.8.12}
\begin{enumerate}
\item[(i)] There is a constant $c>0$ such that
\begin{equation}\label{e21.7.9.12}
\langle H(x)v,v\rangle \geq c|v|^2\mbox{ for all }x\in P, v\in \bR^n,
\end{equation}
where the LHS has to be interpreted as $+\infty$ if $v\not\in TF_x$;
\item[(ii)] $G = H^{-1}$ is smooth on $P$, $G(x)$ is
  positive-semidefinite for all $x\in P$ and
\begin{equation}\label{e22.7.9.12}
G(x)\xi = 0 \mbox{ if and only if }\xi \in N^*F_x
\end{equation}
(i.e. $\xi$ is conormal to $F_x$ at $x$.)
\end{enumerate}
\end{lem}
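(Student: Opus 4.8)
The plan is to work locally in adapted coordinates centred at a point $p$ of $P$, reducing everything to a uniform statement about the symplectic potential near the various types of face, and then to patch the resulting estimates together using the compactness of $P$ and a finite cover by adapted coordinate charts. First I would fix $p\in P$ and choose adapted coordinates $x$ centred at $p$, so that $F_p = \{x_1 = \dots = x_q = 0\}$ locally and $x_j \geq 0$ on $P$ for $j \leq q$. By the boundary condition \eqref{e21.18.8.11}, after subtracting an affine function (which changes neither $H$ nor $G$) we may write $u = u_0 + v$ where $v \in C^\infty(P)$ and $u_0 = \tfrac12 \sum_a \ell_a \log \ell_a$; near $p$ the only $\ell_a$ that vanish are $x_1, \dots, x_q$ (up to smooth positive factors and relabelling), so $\Hess(u) = \tfrac12 \operatorname{diag}(1/x_1, \dots, 1/x_q, 0, \dots, 0) + (\text{smooth bounded symmetric matrix}) + (\text{the contribution of the nonvanishing } \ell_a, \text{ which is smooth})$. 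This is the model computation carried out in \cite{Burns,Guillemin_1994} for the Guillemin metric, and adding the smooth term $v$ does not disturb the leading singular behaviour.

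For part (i): on a neighbourhood of $p$ in $P$, the singular part $\tfrac12 \sum_{j=1}^q x_j^{-1}\,\rd x_j^2$ is a positive-semidefinite form that blows up as any $x_j \to 0$, and on the subspace $TF_p = \{v_1 = \dots = v_q = 0\}$ it vanishes, leaving the restriction of $\Hess(u_{F_p})$, which is strictly convex in the interior of $F_p$ by the strict-convexity hypothesis on the symplectic potential. So on a small enough chart around $p$ there is a local lower bound $\langle H(x)v, v\rangle \geq c_p |v|^2$ for $v \in TF_x$, with the convention that the left side is $+\infty$ when $v \notin TF_x$ (coming from the $x_j^{-1}$ terms). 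Covering $P$ by finitely many such charts and taking $c = \min_p c_p$ gives \eqref{e21.7.9.12}. The one point needing care is uniformity as $x$ ranges over a whole chart rather than sitting at $p$: here I would shrink the chart if necessary so that the smooth correction $\Hess(v)$ plus the regular part of $\Hess(u_0)$ stays within $\tfrac12 c_p$ of its value at $p$ in operator norm, and use that the singular diagonal terms only help.

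For part (ii): invert the block structure. In the adapted coordinates, write $H$ in the block form with the first $q$ coordinates (the "conormal" directions) and the last $n-q$ ("tangential") directions; the $(1,1)$ block is $\sim \tfrac12\operatorname{diag}(1/x_1,\dots,1/x_q) + O(1)$, hence invertible with inverse $O(x_j)$, while the Schur complement governing the $(2,2)$ block converges to $\Hess(u_{F_p})$, which is invertible. A direct Schur-complement computation then shows $G = H^{-1}$ extends smoothly up to $\{x_1 = \dots = x_q = 0\}$ with $G(p)$ having its $(1,1)$ block equal to zero and its $(2,2)$ block equal to $(\Hess u_{F_p})^{-1}$; in particular $G(p)$ is positive-semidefinite with kernel exactly $N^*F_p = \operatorname{span}\{\rd x_1, \dots, \rd x_q\}$, which is \eqref{e22.7.9.12}. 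Smoothness of $G$ on all of $P$ follows by patching these local statements. I expect the main obstacle to be precisely this local analysis of $H^{-1}$ near a face: one must check that the off-diagonal blocks of $H$ are $O(1)$ (not worse) so that the Schur complement is well-behaved, and that the resulting expansion of $G$ is genuinely smooth (not merely continuous) up to the boundary — this is where the precise form of the boundary condition \eqref{e21.18.8.11}, together with the fact that $u_F$ inherits the analogous boundary condition on each face, does the real work. Everything else is bookkeeping with a finite atlas of adapted charts and the compactness of $P$.
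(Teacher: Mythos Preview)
Your proposal is correct and follows essentially the same strategy as the paper: work in adapted coordinates centred at $p$, write $H$ in block form with singular diagonal part $(2D)^{-1}$ plus a smooth remainder, use the positive-definiteness of $H_2 = \Hess(u_{F_p})$ together with compactness for part~(i), and invert the block structure for part~(ii). The only cosmetic difference is that the paper carries out the inversion via the conjugation $\Lambda H \Lambda = I + R(D)$ with $\Lambda = \diag(\sqrt{2}D^{1/2}, H_2^{-1/2})$ and a Neumann series, whereas you propose the Schur-complement route; both give the same conclusion that $G$ extends smoothly to $\partial P$ with kernel exactly $N^*F_x$.
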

\begin{proof}
Let $p\in P$ and choose adapted coordinates such that $F_p$ is defined
by the vanishing of $x_1,\ldots, x_q$.  In particular these functions
are $\geq 0$ on $P$. It is convenient to write $x' = (x_1,\ldots,
x_q)$ and $x'' = (x_{q+1},\ldots, x_n)$.  

Set
\begin{equation}\label{e5.24.8.11}
D = \diag (x_1,\ldots, x_q).
\end{equation}
Then corresponding to the splitting of variables $x = (x',x'')$, we have the block decomposition
\begin{equation}\label{e3.13.9.11}
H = (u_{ij}) = \begin{pmatrix} (2D)^{-1} + H_0 & H_1 \cr H_1^t &
 H_2 \end{pmatrix}
\end{equation}
of the Hessian of $u$, where 
\begin{equation}
\begin{pmatrix} H_0 & H_1 \cr H_1^t & H_2 \end{pmatrix}
\end{equation}
is smooth.  At the boundary, $H_2$ is the Hessian of $u_{F_p}$, $H_2$
is positive-definite near $F_p$ (cf.\ \S\ref{s_met}). Hence $H$
is positive-definite and $\langle H(x)v,v\rangle = +\infty$ if and only if $v$ has
a non-zero component in the subspace spanned by $e_1,\ldots,e_q$, i.e if
$v$ is {\em not} tangent to $F_p$. Covering $P$ by a
finite number of open sets of this kind, a simple compactness argument
establishes part (i) of the lemma.

For part (ii), let 
$$
\Lambda = \begin{pmatrix} \sqrt{2}D^{1/2} & 0 \\ 0 &
  H_2^{-1/2} \end{pmatrix}
$$
Then
\begin{equation}\label{e12.4.9.12}
\Lambda H \Lambda = 1 +  R(D)
\end{equation}
where
\begin{equation} 
R(D) = 
\begin{pmatrix}
2D^{1/2}H_0D^{1/2} & \sqrt{2}D^{1/2} H_1 H_2^{-1/2} \\
\sqrt{2}H_2^{-1/2}H_1^tD^{1/2} & 0 \end{pmatrix}.
\end{equation}
Now certainly $\|R(D)\| = O( | x'|^{1/2})$ for small $x'$ and so
sufficiently close to $F_p$, we have
\begin{equation}
(1 + R(D))^{-1} =  \sum_{j=0}^\infty (-R(D))^j = 1 + \tilde{S}(D),
\end{equation}
say.  It is easy to see, moreover, that
\begin{equation}\label{e21.4.9.12}
\Lambda^{-1}\tilde{S}(D)\Lambda^{-1} = 
\begin{pmatrix} D & 0 \\ 0 & 1 \end{pmatrix}S(D)
\begin{pmatrix} D & 0 \\ 0 & 1 \end{pmatrix}
\end{equation}
where $S(D)$ is now a smooth function of $x'$.  It follows that the
inverse $G$ of $H$ has the form
\begin{equation}\label{e29.4.9.12}
\begin{pmatrix} 2D &  0 \\ 0 & H_2^{-1} \end{pmatrix} +
\begin{pmatrix} D & 0 \\ 0 & 1 \end{pmatrix}S(D)
\begin{pmatrix} D & 0 \\ 0 & 1 \end{pmatrix}.
\end{equation}
In particular $G$ is smooth up to
the boundary and everywhere positive-semidefinite.

For the last part, suppose first that $Q$ is a facet of $P$ and
suppose also that coordinates are chosen so that $Q = \{x_1=0\}$. Now
let $p\in Q$.  Then $F_p\subset Q$ and so $x_1$ will be among the
coordinates adapted to $F_p$ and centred at $p$.  With these choices,
if $\xi$ annihilates $TQ$ then it must be a multiple of $e_1$, and by \eqref{e29.4.9.12},
\begin{equation}\label{e31.4.9.12}
Ge_1 = 2x_1e_1 + x_1\begin{pmatrix} D & 0 \\ 0 & 1\end{pmatrix}
S(D)e_1,
\end{equation}
which shows that $Ge_1 = 0$ at $p$.  It follows that $Ge_1 = 0$ on the
whole of $Q$ (since $p\in Q$ was arbitrary).

If now $F = Q_1\cap\cdots \cap  Q_q$ is an arbitrary face of $P$, then choosing
adapted coordinates, we know that $Ge_j=0$ along $Q_j$, and so $G
e_j=0$ for all $j=1,\ldots q$ on $F$. Since $N^*F$ is the span of
$\{e_1,\ldots,e_q\}$, the `if' part of \eqref{e22.7.9.12} follows.

The `only if' part of \eqref{e22.7.9.12} is proved similarly.
\end{proof}

We now give some key properties of $\phi$.

\begin{lem}\label{l3.28.8.12}
\begin{enumerate}
\item[(i)]
The function $\phi(x,y)$ is smooth on $P \times \Int(P)$ and
there is a constant $c>0$ such that
\begin{equation}\label{e21a.7.9.12}
\phi(x,y) \geq c|x-y|^2 \mbox{ for all }x\in P,\; y \in \Int(P).
\end{equation}

\item[(ii)]  The function $\phi$ extends naturally to a function on
  $P\times P$ with   values in $[0,\infty]$ such that $\phi(x,y)= +
  \infty$ if and only if $x\not\in \overline{F}_y$ and satisfying
\begin{equation}\label{e21b.7.9.12}
\phi(x,y) \geq c|x-y|^2 \mbox{ for all }x,y\in P.
\end{equation}

\item[(iii)]  Let $p$ be a point of $P$ and let $x = (x',x'')$ be
  adapted coordinates centred at $p$.  Then there is a constant $C$ such that
  for all sufficiently small $y$,
\begin{equation}\label{e22a.7.9.12}
\phi(0,y) \leq \sum_{j=1}^q y_j + C|y|^2
\end{equation}
in these coordinates.
\end{enumerate}
\end{lem}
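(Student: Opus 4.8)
The plan is to derive all three parts from the explicit formula \eqref{e11.9.9.12}, $\phi(x,y)=2\bigl(u(x)-u(y)-\langle\rd u(y),x-y\rangle\bigr)$, together with the uniform convexity estimate \eqref{e21.7.9.12} of Lemma~\ref{l1.28.8.12} and the precise description \eqref{e21.18.8.11}, \eqref{e23.18.8.11} of the boundary singularities of $u$. For part (i) the regularity statement is inherited directly from $u$ (smooth on $\Int(P)$), since for fixed $y\in\Int(P)$ the map $x\mapsto\phi(x,y)$ differs from $2u$ by an affine function with coefficients depending smoothly on $y$. For the quadratic lower bound I would fix $y\in\Int(P)$ and $x\in P$ and apply the integral form of Taylor's theorem to $g(s)=u(y+s(x-y))$: since $y+s(x-y)\in\Int(P)$ for $s\in[0,1)$, the Hessian $H$ is finite along the open segment and \eqref{e21.7.9.12} gives $g''(s)=\langle H(y+s(x-y))(x-y),x-y\rangle\geq c|x-y|^2$; integrating $g(1)-g(0)-g'(0)=\int_0^1(1-s)g''(s)\,\rd s$ gives $\phi(x,y)=2\bigl(g(1)-g(0)-g'(0)\bigr)\geq c|x-y|^2$. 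To avoid differentiating $u$ at a boundary endpoint $x$ one first proves this for $x\in\Int(P)$ and passes to the limit using continuity of $u$, $u(y)$ and $\rd u(y)$.

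For part (ii) I would \emph{define} the extension to $\overline{F}_y\times F_y$ for $y\in\partial P$ by the formula \eqref{e11.9.9.12} with $u$ replaced by its restriction $u_{F_y}$ to the relative interior of the face whose interior contains $y$ (well defined and strictly convex by \S\ref{s_met}), and set $\phi(x,y)=+\infty$ when $x\notin\overline{F}_y$; the work is to check that this is the \emph{natural} extension, i.e.\ that it equals $\lim_{y'\to y,\,y'\in\Int(P)}\phi(x,y')$. Writing $u=u_0+w$ with $u_0$ as in \eqref{e23.18.8.11} and $w\in C^\infty(P)$, and using $\ell_a(z)=\langle z,\nu_a\rangle-\lambda_a$, one has $\langle\rd u_0(y'),x-y'\rangle=\tfrac12\sum_a(\log\ell_a(y')+1)(\ell_a(x)-\ell_a(y'))$. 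Terms with $\ell_a(y)>0$ are continuous at $y$; for a facet with $\ell_a(y)=0$ (these cut out $\overline{F}_y$) one has $\log\ell_a(y')\to-\infty$ and $\ell_a(x)-\ell_a(y')\to\ell_a(x)\geq0$, so either $\ell_a(x)>0$ for some such $a$ — equivalently $x\notin\overline{F}_y$ — and the term tends to $-\infty$, forcing $\phi(x,y')\to+\infty$, consistent with the Remark following \eqref{e1.10.8.11} and with \eqref{e1.7.9.12}, \eqref{e5.7.9.12}; or $\ell_a(x)=0$ for all such $a$, i.e.\ $x\in\overline{F}_y$, in which case $(\log\ell_a(y')+1)(\ell_a(x)-\ell_a(y'))=-\ell_a(y')\log\ell_a(y')-\ell_a(y')\to0$ and the limit is exactly the value built from $u_{F_y}$. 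The bound \eqref{e21b.7.9.12} is then trivial for $x\notin\overline{F}_y$, and for $x\in\overline{F}_y$ follows by repeating the Taylor argument of part (i) inside the affine span of $F_y$: for $s\in[0,1)$ the point $y+s(x-y)$ lies in the relative interior of $F_y$, so $x-y\in TF_y$ and \eqref{e21.7.9.12} (with $v=x-y$ tangent, so $\langle H(\cdot)v,v\rangle$ is the Hessian of $u_{F_y}$) again yields $c|x-y|^2$.

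For part (iii) the plan is a direct computation. In adapted coordinates centred at $p$ one has $p=0$ and $F_p=\{x_1=\cdots=x_q=0\}$ near $p$ with $x_j=\ell_{a_j}(x)$ for $j\leq q$, while every other facet function is positive near $p$; so near $p$ we may write $u(x)=\tfrac12\sum_{j=1}^q x_j\log x_j+w(x)$ with $w$ smooth near $p$. Substituting into $\phi(0,y)=2\bigl(u(0)-u(y)+\langle\rd u(y),y\rangle\bigr)$, the singular contributions collapse since $-y_j\log y_j+(\log y_j+1)y_j=y_j$, leaving
\begin{equation*}
\phi(0,y)=\sum_{j=1}^q y_j+2\bigl(w(0)-w(y)+\langle\rd w(y),y\rangle\bigr).
\end{equation*}
The bracket is the second-order Taylor remainder at $y$ of the smooth function $w$, equal to $\int_0^1(1-s)\langle\Hess(w)((1-s)y)\,y,y\rangle\,\rd s$, hence bounded in absolute value by $\tfrac12\bigl(\sup_B\|\Hess(w)\|\bigr)|y|^2$ for $y$ in a small ball $B$ about $p$ in $P$; this gives \eqref{e22a.7.9.12} for interior $y$ and extends to boundary $y$ near $p$ by the continuity from part (ii).

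The hard part will be part (ii): keeping careful track of exactly which logarithmic singularities of $\rd u$ survive in the limit $y'\to y\in\partial P$, matching the dichotomy with the convention $\phi(\alpha,y)=+\infty$ for $\alpha\notin\overline{F}_y$ forced by \eqref{e1.7.9.12} and \eqref{e5.7.9.12}, and being a little careful about applying Taylor's formula up to the boundary in the $x$-variable, where $u$ is only continuous (handled by a density argument). Parts (i) and (iii) are then routine.
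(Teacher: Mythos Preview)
Your proposal is correct and follows essentially the same route as the paper: part (i) via the uniform Hessian bound of Lemma~\ref{l1.28.8.12} integrated along the segment from $y$ to $x$; part (ii) via the decomposition $u=u_0+w$ and analysis of the logarithmic terms, then reduction to part (i) applied on the face $F_y$; part (iii) via the local adapted-coordinate computation collapsing the singular part to $\sum_{j\leq q} y_j$. The only differences are cosmetic---you use the integral remainder form of Taylor's theorem where the paper integrates $f''\geq c$ twice, and you are somewhat more explicit than the paper in verifying that the limit $y'\to y\in\partial P$ agrees with the formula built from $u_{F_y}$.
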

\begin{proof}
Suppose first that $y\in \Int(P)$.  Let $v\in \bR^n$ be any unit (with
respect to our arbitrary Euclidean structure) vector, and define
\begin{equation}\label{e11.13.9.12}
f(t) = \phi(y + tv,y).
\end{equation}
The domain of $f$ is the interval $I$ such that $y+tv \in P$. In
particular, $0\in I$ and by \eqref{e1.13.9.12}
\begin{equation}\label{e12.13.9.12}
f(0)=0,\;f'(0) = 0,
\end{equation}
and
\begin{equation}\label{e13.13.9.12}
f''(t) = \langle H(y+tv)v,v\rangle \geq c
\end{equation}
by Lemma~\ref{l1.28.8.12}.  Integrating this from $0$ to $t$ and using
\eqref{e12.13.9.12}, we obtain
$f(t) \geq ct^2/2.$  Since $|x-y| = |t|$, part (i) follows.

To understand the behaviour of $\phi$ near the boundary, let us write
\begin{equation}\label{e41.7.9.12}
u = u_0 + w,
\end{equation}
where $u_0$ is the Guillemin potential \eqref{e23.18.8.11} and $w$ is
smooth on $P$. A simple computation gives
\begin{equation}\label{e42.7.9.12}
\phi(x,y) = 
\sum \left(\ell_a(x)(\log\ell_a(x) - \log \ell_a(y)) - \ell_a(x-y)\right) +
\psi(x,y),
\end{equation}
where 
\begin{equation}
\psi(x,y) = 2[w(x) - w(y) - \langle \nabla w(y),x-y\rangle].
\end{equation}
is smooth, hence bounded, on $P\times P$.
If we fix $x$ and let $y \to y_0\in \partial P$, then it is clear that
$\phi(x,y) \to +\infty$ if there is an index $a$ with $\ell_a(y_0)= 0$ but $\ell_a(x) > 0$.  This is
precisely the condition $x \not\in \overline{F}_{y_0}$ which is consistent
with \eqref{e1.7.9.12} and \eqref{e5.7.9.12}.

It remains only to consider the situation that $x\in \overline{F_y}$
where $F_y$ is the interior of a proper face of $P$.  Now the
restriction $\phi_F$, say, of $\phi$ to $F$, is given by
\begin{equation}\label{e3.30.9.12}
\phi_F(x,y) = 2\left( u_F(x) - u_F(y) - \langle\nabla
  u_F(y),x-y\rangle\right),\;\; (x \in F, y \in \Int(F)).
\end{equation}
where $u_F$, the restriction of $u$ to $F$, is the symplectic potential
for the restriction of the K\"ahler structure to $\mu^{-1}(F)$.

Thus we can replace $P$ by $F$, $u$ by $u_F$ and $\phi$ by $\phi_F$ in
the argument at the beginning of this proof to obtain
\eqref{e21b.7.9.12} for $x\in \overline{F}_y$.  This completes the
proof of part (ii).

The last part is a local computation.  In adapted coordinates,
\begin{equation}
\phi(0,y) = 2[u(0) - u(y) + \langle \nabla u(y),y\rangle]
= \sum_{j=1}^q y_j - 2\{v(y) - v(0) - \langle \nabla v(y), y\rangle\}.
\end{equation}
Now the part in curly brackets is a smooth function of $y$ which vanishes
and has gradient $0$ at $y=0$.  Hence for sufficiently small $y$, we
can bound this by a multiple of $|y|^2$, giving
$$
\phi(0,y) \leq \sum_{j=1}^q y_j + C|y|^2
$$
as required.
\end{proof}

\subsection{Distributional asymptotic expansion of $|e_{\alpha,k}|^2$}

The main goal of this section is the following:

\begin{prop}
Let $f$ be a smooth $T^n$-invariant function on $X$ and denote by the
same letter the corresponding function on $P$.  Denote by $s$ the
scalar curvature of the metric with symplectic potential $u$.  For
each $\alpha \in P\cap \Lambda^*_k$, recall that $e_{\alpha,k}$ is the
unit-length holomorphic section of $L^k$ corresponding to the point
$\alpha$. Then we have
\begin{equation}\label{ep2.24.8.11}
\langle |e_{\alpha,k}|^2,f\rangle = f(\alpha)+
\frac{1}{2k}\left(s(\alpha)f(\alpha) + \frac{1}{2}\del_i\del_j(G^{ij}f)(\alpha)\right) +
\frac{1}{k^2}\langle R_k(\alpha),f\rangle
\end{equation}
where $R_k(\alpha)$ is a distribution which satisfies
\begin{itemize}
\item for fixed $\alpha\in P$, $\langle R_k(\alpha),f\rangle \leq C\|f\|_{C^4}$;
\item for each fixed test-function $f$, $\langle R_k(\alpha),f\rangle$
  is smooth in $\alpha$ and bounded for $k\gg 0$.
\end{itemize}
\label{p2.24.8.11}\end{prop}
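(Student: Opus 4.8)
**The plan is to apply Laplace's method to the ratio $\langle |e_{\alpha,k}|^2, f\rangle = \int_P e^{-k\phi(\alpha,y)}f(y)\,\rd y \big/ \int_P e^{-k\phi(\alpha,y)}\,\rd y$, carefully tracking the behaviour as $\alpha$ ranges over all of $P$, including the boundary.** The starting point is Lemma~\ref{l3.28.8.12}: for fixed $\alpha$, the function $y\mapsto\phi(\alpha,y)$ is nonnegative, vanishes exactly at $y=\alpha$ to second order with Hessian $H(\alpha)$ in the directions tangent to $F_\alpha$, and grows at least quadratically in $|y-\alpha|^2$. When $\alpha\in\Int(P)$, the critical point is nondegenerate in all $n$ directions and the classical Laplace expansion gives, for any smooth amplitude $a$,
\[
\int_P e^{-k\phi(\alpha,y)}a(y)\,\rd y = \left(\frac{2\pi}{k}\right)^{n/2}\frac{1}{\sqrt{\det H(\alpha)}}\left(a(\alpha) + \frac{1}{k}L_1 a(\alpha) + \frac{1}{k^2}L_2 a(\alpha) + \cdots\right),
\]
where $L_1, L_2$ are the usual second-order (resp. fourth-order) differential operators in $a$ whose coefficients are built from the Taylor coefficients of $\phi(\alpha,\cdot)$ at $y=\alpha$; these involve $G=H^{-1}$ and the third and fourth $y$-derivatives of $\phi$. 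Taking the ratio, the prefactors $(2\pi/k)^{n/2}(\det H)^{-1/2}$ cancel, giving $\langle|e_{\alpha,k}|^2,f\rangle = f(\alpha) + k^{-1}(\text{correction}) + k^{-2}(\cdots)$, and the $k^{-1}$ term is exactly $L_1 f(\alpha) - f(\alpha) L_1(1)(\alpha)$, which one computes to be $\tfrac{1}{2}\del_i\del_j(G^{ij}f)$ plus a curvature term. Here I would invoke the standard identity from toric Kähler geometry (Abreu's formula) expressing the scalar curvature as $s = -\tfrac{1}{2}\del_i\del_j G^{ij}$, so that regrouping $\tfrac12\del_i\del_j(G^{ij}f)$ produces precisely $\tfrac{1}{2k}\big(s(\alpha)f(\alpha) + \tfrac12\del_i\del_j(G^{ij}f)(\alpha)\big)$ as stated — the algebra of this matching is the computation I would delegate to the reference \cite{Burns, Dias}.

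**The main obstacle is the boundary behaviour of $\alpha$.** When $\alpha$ lies in the relative interior of a proper face $F$ of codimension $q$, the function $y\mapsto\phi(\alpha,y)$ is $+\infty$ off $\overline{F_\alpha}$ and degenerates: in adapted coordinates $y=(y',y'')$ centred at $\alpha$, part (iii) of Lemma~\ref{l3.28.8.12} shows $\phi(0,y) \le \sum_{j=1}^q y_j + C|y|^2$, and in fact $\phi(0,y) = \sum_{j=1}^q y_j + O(|y|^2)$ with the quadratic part nondegenerate only in the $y''$-directions. Thus the integral splits: in the $y''$ (tangential) directions one has an honest nondegenerate Laplace expansion with Hessian $H_2(\alpha) = \Hess(u_F)(\alpha)$; in the $y'$ (normal) directions the leading behaviour is exponential decay $e^{-k y_j}$, a Watson-type integral over the half-space $y_j\ge 0$. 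The key point is that the block structure of $G$ from \eqref{e29.4.9.12} — namely $G = \begin{pmatrix} 2D & 0 \\ 0 & H_2^{-1}\end{pmatrix} + (\text{smooth vanishing on }F)$ — makes the two contributions assemble into the same formula \eqref{ep2.24.8.11}: the normal-direction integrals $\int_0^\infty e^{-ky_j}(\cdots)\,\rd y_j$ contribute powers of $k^{-1}$ that precisely reproduce the terms involving the $2D$-block of $G$. One must check that the expansion is uniform as $\alpha$ approaches the boundary (i.e. that the remainder $R_k(\alpha)$ stays bounded), which is where the uniform lower bound $\phi(x,y)\ge c|x-y|^2$ from \eqref{e21b.7.9.12} and the smoothness of $G$ up to $\partial P$ do the work.

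**For the remainder estimate** $\langle R_k(\alpha),f\rangle \le C\|f\|_{C^4}$, I would use the standard quantitative form of Laplace's method: after the substitution $y = \alpha + k^{-1/2}\eta$ (in tangential directions) and $y' = k^{-1}\zeta'$ (in normal directions), Taylor-expand the amplitude $f$ to order $3$ with integral remainder; the explicit terms up to $k^{-1}$ give \eqref{ep2.24.8.11}, and what is left over is bounded by $\sup|D^4 f|$ times an absolutely convergent Gaussian-type integral (using $\phi\ge c|x-y|^2$ to control the tails and to justify interchanging limits). Since only derivatives up to order $4$ of $f$ appear, the $C^4$ bound follows; smoothness of $\langle R_k(\alpha),f\rangle$ in $\alpha$ and its boundedness for $k\gg 0$ come from differentiating under the integral sign, the smoothness of $G$ and $s$ on all of $P$, and the uniform nondegeneracy in the relevant (face-tangential) directions supplied by Lemma~\ref{l1.28.8.12}(i). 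The delicate bookkeeping — combining the finitely many adapted-coordinate charts of Lemma~\ref{l1.17.11.11} and checking the chart-overlap consistency of the expansion — is exactly the portion of the argument that \cite{Burns} carries out for the Guillemin metric and that this section re-derives in general; I would follow that template, the only genuinely new input being the boundary analysis above.
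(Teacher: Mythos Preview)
Your approach is the ``direct'' Laplace expansion: compute numerator and denominator separately via stationary phase, treat interior $\alpha$ and boundary $\alpha$ by different model integrals (Gaussian versus Watson-type), and then argue that the answers patch together.  The paper takes a genuinely different route.  Rather than expanding numerator and denominator separately, it exploits the identity
\[
\tfrac{1}{2}G^{ij}(y)\,\del_j e^{-k\phi(\alpha,y)} = -k(y_i-\alpha_i)\,e^{-k\phi(\alpha,y)}
\]
together with the Hadamard decomposition $f(y)-f(\alpha)=(y_j-\alpha_j)\delta_j f(y)$ to integrate by parts once and produce an operator $\cD_\alpha f = \tfrac12\del_i(\rho G^{ij}\delta_j f)$ with the property
\[
\int_P e^{-k\phi}f\,\rd y = f(\alpha)\int_P e^{-k\phi}\rho\,\rd y + k^{-1}\int_P e^{-k\phi}\cD_\alpha f\,\rd y + (\mbox{exponentially small}).
\]
Iterating this gives the expansion directly as a ratio, with no prefactor $(\det H)^{-1/2}$ ever appearing.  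The boundary term in the integration by parts vanishes \emph{for every} $\alpha\in P$ because $G^{ij}\nu_j=0$ on each facet (Lemma~\ref{l1.28.8.12}(ii)); this is what makes the argument uniform in $\alpha$ without any case distinction.  Smoothness in $\alpha$ is then automatic, since $G$ is smooth on $\overline{P}$ and the cut-off $\rho$ can be chosen to vary smoothly with $\alpha$.  Finally, $\cD_\alpha f(\alpha)$ is computed by Taylor-expanding $\delta_j f$ to first order, giving $\tfrac14\del_i\del_j(fG^{ij}) - \tfrac14 f\,\del_i\del_j G^{ij}$, and Abreu's formula yields \eqref{ep2.24.8.11}.

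Your proposal, by contrast, has a real gap at exactly the point you flag as ``the main obstacle'': uniformity as $\alpha$ approaches $\partial P$.  Your rescalings $y=\alpha+k^{-1/2}\eta$ (tangential) and $y'=k^{-1}\zeta'$ (normal) are chosen according to the face $F_\alpha$ containing $\alpha$; but when $\alpha$ is at distance $\epsilon$ from a lower-dimensional face with $\epsilon$ comparable to $k^{-1/2}$ or $k^{-1}$, neither model integral is uniformly valid, and your case-by-case expansions do not obviously glue.  Relatedly, the cancelled prefactor $(\det H(\alpha))^{-1/2}$ blows up as $\alpha\to\partial P$, so the remainder estimates for numerator and denominator separately degenerate and one must control their \emph{ratio} directly---which is precisely what the paper's integration-by-parts device accomplishes.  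Your appeal to \cite{Burns} for the ``template'' is also misplaced: that reference already uses the $\cD$-operator method, not the stratified Laplace analysis you sketch.
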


Here we recall that $G = (G^{ij})$ is the inverse of the Hessian of the symplectic
potential and that $s$ is the scalar curvature of the metric $g$.  We
recall also Abreu's famous formula for the scalar curvature of the
metric \eqref{e2.7.9.12}
\begin{equation}\label{e1.4.9.13}
s = - \frac{1}{2}\del_i\del_j G^{ij} \mbox{ (summation convention)}.
\end{equation}

We begin with a stronger result covering the case that $\supp(f)$ does
not contain $\alpha$.

\begin{prop}
Suppose that $\alpha\in P$ and $f\in C^\infty(P)$ with $\alpha\not\in
\supp(f)$. Then
\begin{equation}\label{e3.10.8.11}
\langle |e_{\alpha,k}|^2,f \rangle = O(k^{-\infty})
\end{equation}
for large $k$. 
\label{p3.24.8.11}\end{prop}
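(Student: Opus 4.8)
The plan is to estimate the ratio \eqref{e11.7.9.12} directly from Laplace's method, using the convexity lower bound on $\phi$ from Lemma~\ref{l3.28.8.12}. Fix $\alpha\in P$ and $f\in C^\infty(P)$ with $\alpha\notin\supp(f)$. By compactness of $\supp(f)$ and continuity of $y\mapsto|\alpha-y|$, there is $\delta>0$ such that $|\alpha-y|\geq\delta$ for all $y\in\supp(f)$. First I would bound the numerator
\[
\left|\int_P e^{-k\phi(\alpha,y)}f(y)\,\rd y\right| \leq \|f\|_{C^0}\int_{\supp(f)} e^{-k\phi(\alpha,y)}\,\rd y \leq \|f\|_{C^0}\,e^{-kc\delta^2}\Vol(P),
\]
using \eqref{e21b.7.9.12}, namely $\phi(\alpha,y)\geq c|\alpha-y|^2\geq c\delta^2$ on $\supp(f)$. (If $\alpha\notin\overline{F}_y$ for some $y$ the integrand is zero there, so this only helps.)

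Next I would bound the denominator $\int_P e^{-k\phi(\alpha,z)}\,\rd z$ from below by a negative power of $k$. If $\alpha\in\Int(P)$, then $\phi(\alpha,z)\leq C|\alpha-z|^2$ near $z=\alpha$ (Taylor expansion, using \eqref{e1.13.9.12} and smoothness of $\phi$ on $P\times\Int(P)$), so integrating over a small ball gives $\int_P e^{-k\phi(\alpha,z)}\,\rd z\geq c' k^{-n/2}$. If $\alpha$ lies on a face $F$ of codimension $q$, then part~(iii) of Lemma~\ref{l3.28.8.12} gives, in adapted coordinates centred at $\alpha$, the bound $\phi(\alpha,y)\leq\sum_{j=1}^q y_j+C|y|^2$ for small $y\geq0$; integrating $e^{-k\phi}$ over a small box $\{0\leq y_j\leq k^{-1/2}\}$ then produces a lower bound of the shape $c''k^{-q}k^{-(n-q)/2}$. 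In either case the denominator is bounded below by $c_\alpha k^{-N_\alpha}$ for some $N_\alpha$ depending only on the combinatorial position of $\alpha$.

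Dividing, $\langle|e_{\alpha,k}|^2,f\rangle = O(k^{N_\alpha}e^{-kc\delta^2})\|f\|_{C^0} = O(k^{-\infty})$, which is the claim. I expect the main obstacle to be a clean treatment of the boundary case for the denominator lower bound: one must make sure the adapted-coordinate box on which \eqref{e22a.7.9.12} applies is genuinely contained in $P$ (the constraint $y_j\geq0$ for $j\leq q$ is exactly the adapted-coordinate condition, so this is fine, but the remaining coordinates $y''$ must also stay in $P$, which forces the box to be small but still of fixed positive size). The uniformity in $\alpha$ — i.e. that the constants $c_\alpha$, $N_\alpha$ can be taken from a finite list — follows from covering $P$ by finitely many adapted charts, and is exactly what is needed so that, when $\alpha$ is later allowed to vary and one sums over lattice points, the estimate is uniform on compact subsets of $U_t$ as asserted in Theorem~\ref{t1.5.9.12}.
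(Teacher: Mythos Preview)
Your proposal is correct and follows essentially the same two-step strategy as the paper: bound the numerator above by $Ce^{-kc\delta^2}\|f\|_{C^0}$ using the quadratic lower bound $\phi(\alpha,y)\geq c|\alpha-y|^2$ from Lemma~\ref{l3.28.8.12}, and bound the denominator below polynomially in $k$ using part~(iii) of that lemma in adapted coordinates centred at $\alpha$.

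One small slip: integrating over a box of side $k^{-1/2}$ in \emph{all} coordinates does not produce the lower bound $c''k^{-q}k^{-(n-q)/2}$ you state, because the linear term then contributes $k\sum_{j\leq q}y_j = O(k^{1/2})$ to the exponent and $e^{-k\phi}$ is not bounded below by a constant on that box. The paper instead integrates over a \emph{fixed} box $V$, absorbs the quadratic term $C|y'|^2$ into the linear one to get $\phi(0,y)\leq 2\sum_{j\leq q}y_j + C|y''|^2$, and then compares with the explicit integrals $\int_0^\infty e^{-2ky_j}\,\rd y_j = (2k)^{-1}$ and $\int_{\bR} e^{-kCy_j^2}\,\rd y_j \sim k^{-1/2}$. (Equivalently, a shrinking box works if you take side $k^{-1}$ in the $y'$-directions and $k^{-1/2}$ in the $y''$-directions.) This does not affect the validity of your argument, since even the cruder sub-exponential lower bound your box actually gives is enough against the genuinely exponential decay $e^{-kc\delta^2}$ of the numerator.
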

\begin{proof}  Since
\begin{equation}
\langle |e_{\alpha,k}|^2,f \rangle = 
\frac{\int_P   e^{-k\phi(\alpha,y)}
f(y)\,\rd y}{\int_P
  e^{-k\phi(\alpha,y)}\,\rd y},
\end{equation}
we need an $O(k^{-\infty})$ upper bound for the numerator and an
$O(k^N)$ lower bound for the denominator.   In fact we shall obtain an
exponentially small upper bound for the denominator. 

By Lemma~\ref{l3.28.8.12},
\begin{equation}
|e^{-k\phi(\alpha,z)}f(z)| \leq e^{-ck|\alpha-z|^2}\supp |f|
\end{equation}
and so if the distance from $\alpha$ to $\supp(f)$ is $d$,
\begin{equation}
\left|\int e^{-k\phi(\alpha,y)}f(y)\,\rd y\right | \leq  \|f\|_0\int_{|x-y|\geq d}
  e^{-ck|x-y|^2}\,\rd y
\end{equation}
Now 
\begin{equation}
\int_{|z| \geq d} e^{-kc|z|^2} \,\rd z \leq Ce^{-kd^2}
\end{equation}
for some constant $C$ independent of $k$.  Hence
\begin{equation}\label{e41.5.9.12}
\left|\int e^{-k\phi(\alpha,z)}f(z)\,\rd z\right| \leq Ce^{-kcd^2}\|f\|_0.
\end{equation}
We complete the proof by obtaining a suitable lower bound on the total
mass of $s_{\alpha,k}$. Suppose that $y = (y',y'')$ are adapted
coordinates centred at $\alpha$ so that we are in the situation of
part (iii) of Lemma~\ref{l3.28.8.12}.  Suppose further that the subset
\begin{equation}\label{e31.13.9.12}
V = \{0 \leq y_j \leq \epsilon\mbox{ for }j=1,\ldots,q\} \times \{  |y''| \leq \epsilon\}
\end{equation}
is contained in $P$.  Now by \eqref{e22a.7.9.12},
\begin{equation}
\phi(0,y) \leq \sum_{j=1}^q y_j + C|y'|^2 + C|y''|^2
\end{equation}
and by shrinking $\epsilon$ if necessary, we may absorb the
$|y'|^2$-term into the linear term, getting
\begin{equation}\label{e1.28.8.12}
\phi(0,y) \leq  2\sum_{j=1}^q y_j + C|y''|^2 
\end{equation} 
for $y \in V$.
Then
\begin{equation}\label{e2.28.8.12}
\int_P e^{-k\phi(\alpha,y)}\,\rd y \geq \int_V
e^{-k\phi(\alpha,y)}\,\rd y 
\geq
\int_V \exp\left( - k\left(2\sum_{j=1}^q y_j +
    C|y''|^2\right)\right)\,\rd y.
\end{equation}
Now the difference between this integral and the integral over
$\bR^q_+\times \bR^{n-q}$ is 
exponentially small in $k$, with a constant depending upon
$\epsilon$).  Since
\begin{equation}\label{e3.28.8.12}
\int_0^\infty  e^{-2ky} = \frac{1}{2k}\mbox{ and }
\int_{-\infty}^\infty e^{-kCy^2} = \frac{\sqrt{\pi}}{\sqrt{kC}},
\end{equation}
it follows that
\begin{equation}\label{e5.28.8.12}
\int_P e^{-k\phi(\alpha,y)}\,\rd y \geq Ck^{-q - (n-q)/2} = Ck^{-(n+q)/2}.
\end{equation}
Dividing \eqref{e41.5.9.12} by \eqref{e5.28.8.12} completes the proof.
\end{proof}

The effect of this Proposition is to localize this
$\langle |e_{\alpha,k}|^2,f\rangle$ to an integral over an arbitrarily
small neighbourhood of $\alpha$ in $P$, up to exponentially small terms.
We now calculate this contribution recursively.

With $\alpha$ fixed as before, choose adapted coordinates $y = (y',
y'')$ as in the previous proof, and let $V$ be as in \eqref{e31.13.9.12}.
Choose a cut-off function $\rho\in C^{\infty}_0(V)$, $0\leq \rho\leq
1$, where $\rho=1$ in a smaller neighbourhood $W = \frac{1}{2} V$ of
$\alpha$ in $P$.

Define the operator $\delta : C_0^\infty(V) \to C^\infty(V,\bR^n)$
\begin{equation}\label{e6.28.8.12}
\delta_j f (y) = \int_0^1 \del_j f(ty)\,\rd t
\end{equation}
so that
\begin{equation}\label{e11.15.8.11}
f(y) - f(0) = y_j\delta_jf(y)
\end{equation}
(summation convention) for all $y\in V$. For any function $f\in C^\infty_0(V)$ 
define the linear operator $\cD: C^\infty_0(V) \to C^\infty_0(V)$ by
\begin{equation}
\cD f = \frac{1}{2}\del_i(\rho G^{ij} \delta_j f).
\end{equation}
Note that the operator $\cD$ depends also on the point $\alpha$.  When
we need to draw attention to this fact, we shall denote it also by
$\cD_\alpha$. 

The significance of this operator is as follows
\begin{prop}
With the notation as above, we have, for any $N \geq 1$,
\begin{equation}\label{e21.15.8.11}
\langle |e_{\alpha,k}|^2,f\rangle 
= \sum_{m=0}^N k^{-m}\cD_\alpha^mf(\alpha) +
  k^{-N-1}\cR_{N+1,k,\alpha}(f),
\end{equation}
where the remainder term $\cR_{N+1,k,\alpha}(f)$ is smooth in $\alpha$
for fixed $f$ and satisfies
\begin{equation}\label{e32.13.9.12}
\cR_{N+1,k,\alpha}(f) \leq C_N\|f\|_{C^{2N+2}}.
\end{equation}
uniformly in $\alpha$ and $k$.
\label{p4.24.8.11}\end{prop}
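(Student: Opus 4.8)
The plan is to prove a single ``one--step'' reduction,
\begin{equation*}
\langle |e_{\alpha,k}|^2,g\rangle = g(\alpha) + \frac{1}{k}\,\langle |e_{\alpha,k}|^2,\cD_\alpha g\rangle + O(k^{-\infty})\qquad(g\in C^\infty(P)),
\end{equation*}
uniformly in $\alpha$ and with the $O(k^{-\infty})$ smooth in $\alpha$ for fixed $g$, and then to iterate it $N+1$ times. The mechanism behind the reduction is the exact identity obtained by differentiating the definition \eqref{e11.9.9.12} of $\phi$ in the $y$ variable: one has $\nabla_y\phi(\alpha,y)=2H(y)(y-\alpha)$ on $\Int(P)$, hence $2(y-\alpha)=G(y)\nabla_y\phi(\alpha,y)$ there; since the left--hand side is globally smooth on $P$ while $G$ is smooth up to $\partial P$ by Lemma~\ref{l1.28.8.12}, the right--hand side extends smoothly to $P$ and the identity holds on all of $P$. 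In adapted coordinates centred at $\alpha$ (so $\alpha=0$) this reads $2y_j=\sum_iG^{ij}(y)\,\del_{y_i}\phi(0,y)$, which converts the linear weight $y_j$ attached to $\delta_j g$ into a gradient of $e^{-k\phi}$ upon using $e^{-k\phi}\del_{y_i}\phi=-k^{-1}\del_{y_i}(e^{-k\phi})$.

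To prove the one--step identity I would first localise: by Proposition~\ref{p3.24.8.11}, $\langle |e_{\alpha,k}|^2,g\rangle=\langle |e_{\alpha,k}|^2,\rho g\rangle+O(k^{-\infty})$, so it suffices to take $g\in C^\infty_0(V)$. Using $g(y)-g(0)=y_j\delta_jg(y)$ on $V$ from \eqref{e11.15.8.11} and substituting the identity above, the integral of $g-g(0)$ over $V$ becomes $\tfrac12\sum_{i,j}\int_V e^{-k\phi}(\del_{y_i}\phi)G^{ij}\delta_jg\,\rd y$; inserting the cut--off $\rho$ changes this by only $O(k^{-\infty})$ (on $\{\rho\ne1\}$, $\phi$ is bounded below and $e^{-k\phi}$ beats the at most logarithmic growth of $\del\phi$ near $\partial P$), and $\rho G^{ij}\delta_jg$ now extends by zero to a smooth compactly supported function on $P$. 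Writing $e^{-k\phi}\del_{y_i}\phi=-k^{-1}\del_{y_i}(e^{-k\phi})$ and integrating by parts over $P$ yields $\tfrac1k\int_Pe^{-k\phi}\cD_\alpha g\,\rd y$ --- the boundary terms over $\partial P$ vanish because $Ge_j=0$ on the facets $\{y_j=0\}$ through $\alpha$, which is precisely \eqref{e31.4.9.12}, while $\rho$ is supported away from every other facet. Together with $g(0)=g(\alpha)$, and noting that the integral of $g-g(0)$ over $P\setminus V$ (where $g\equiv0$) and the passage from $\rho g$ back to $g$ are both $O(k^{-\infty})$ after division by $\int_Pe^{-k\phi}\,\rd y\ge Ck^{-(n+q)/2}$ from \eqref{e5.28.8.12}, the identity follows.

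Since $\cD_\alpha$ maps $C^\infty(P)$ into itself (into functions supported in $V$), I would then apply the one--step identity successively to $f,\ \cD_\alpha f,\ \dots,\ \cD_\alpha^N f$ and collect the finitely many $O(k^{-\infty})$ errors, obtaining \eqref{e21.15.8.11} with $\cR_{N+1,k,\alpha}(f)=\langle |e_{\alpha,k}|^2,\cD_\alpha^{N+1}f\rangle+O(k^{-\infty})$. For the bound \eqref{e32.13.9.12} the key point is that $\langle |e_{\alpha,k}|^2,\cdot\rangle$ is integration against a probability measure --- indeed $\langle |e_{\alpha,k}|^2,1\rangle=\|e_{\alpha,k}\|_{L^2}^2=1$ --- so $|\langle |e_{\alpha,k}|^2,\cD_\alpha^{N+1}f\rangle|\le\|\cD_\alpha^{N+1}f\|_{C^0}$; and $\cD_\alpha$ is a second--order differential operator ($\delta_j$ costs one derivative via \eqref{e6.28.8.12}, $\del_i$ another) whose coefficients $G$ and $\rho$ are bounded in every $C^r$ uniformly in $\alpha$, whence $\|\cD_\alpha^{N+1}f\|_{C^0}\le C_N\|f\|_{C^{2N+2}}$. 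Smoothness of $\cR_{N+1,k,\alpha}(f)$ in $\alpha$ comes from the smooth dependence of $\phi(\alpha,\cdot)$ --- hence of $\langle |e_{\alpha,k}|^2,\cdot\rangle$ --- and of the operators $\cD_\alpha$ on $\alpha$ within a fixed adapted chart, together with the fact that each accumulated $O(k^{-\infty})$ term is a pairing of $|e_{\alpha,k}|^2$ against a function supported away from $\alpha$.

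The step I expect to be the main obstacle is the boundary analysis of the integration by parts and the attendant uniformity. One must check that no boundary term survives even when $\alpha$ itself lies on $\partial P$ --- where $\phi$ is only defined with the $+\infty$ convention and adapted coordinates are built from the singular substitution $\sqrt{x_j}\,e^{i\theta_j}$ --- and that the constants in both $\int_Pe^{-k\phi}\,\rd y\ge Ck^{-(n+q)/2}$ and the localisation estimates of Proposition~\ref{p3.24.8.11} can be taken independent of $\alpha$, which is where one uses that $P$ is covered by finitely many adapted charts. The structural fact that actually makes the boundary contribution disappear is the degeneracy $Ge_j|_{\{y_j=0\}}=0$ of Lemma~\ref{l1.28.8.12}, so the real content is verifying that this, together with $\phi=+\infty$ off $\overline{F}_y$, accounts for every facet of $P$ meeting the support of the integrand.
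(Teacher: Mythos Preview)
Your proof is correct and follows essentially the same route as the paper: the key identity $y_j\,e^{-k\phi}=-\tfrac{1}{2k}G^{ij}\partial_i(e^{-k\phi})$, integration by parts with the boundary term killed by $G\nu=0$ on each facet (Lemma~\ref{l1.28.8.12}(ii)), iteration, and the $C^{2N+2}$ bound on $\cD_\alpha^{N+1}$ are all as in the paper. The only organisational difference is that the paper works with the un-normalised integrals and keeps an \emph{exact} identity
\[
\int_P e^{-k\phi}f\,\rd y \;=\; f(\alpha)\int_P e^{-k\phi}\rho\,\rd y \;+\; k^{-1}\int_P e^{-k\phi}\cD_\alpha f\,\rd y \;+\; \int_P e^{-k\phi}(1-\rho)f\,\rd y
\]
at each step, dividing by $\int_P e^{-k\phi}$ only at the end; you instead divide at each step and carry along accumulated $O(k^{-\infty})$ errors. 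Both are fine, though the paper's bookkeeping makes the smoothness-in-$\alpha$ of the remainder slightly more transparent via the explicit formula for $\cR_{N+1,k,\alpha}$.

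Two small points: your claim that $\partial\phi$ has ``at most logarithmic growth near $\partial P$'' is unnecessary (and not quite accurate) --- in adapted coordinates centred at $\alpha$, $\phi(0,\cdot)$ is in fact smooth throughout $V$, as one sees from the explicit formula \eqref{e42.7.9.12}, so $\partial_y\phi$ is simply bounded on $V$. And your separate treatment of facets through $\alpha$ versus other facets is redundant: $G\nu=0$ on \emph{every} facet by Lemma~\ref{l1.28.8.12}(ii), which is exactly the paper's justification.
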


\begin{proof}
Given the test-function $f$, write
\begin{equation}\label{e11.30.9.12}
f(y) =  \rho(y)f(0) + \rho(y)(f(y)-f(0)) + (1- \rho(y))f(y),
\end{equation}
and substitute this into $\int e^{-k\phi(\alpha,y)}f(y)\,\rd y$,
getting
\begin{align}
\int_P e^{-k\phi(y)}f(y)\rd y &=
f(0)\int_P
e^{-k\phi(y)}\rho(y)\,\rd y + \int_P
e^{-k\phi(y)}\rho(y)(f(y)-f(0))\,\rd y \\
&+ \int_P e^{-k\phi(y)}(1-\rho(y))f(y)\,\rd y.
\end{align}
In the second term, use \eqref{e11.15.8.11} and note also that
\begin{equation}
\del_j e^{-k\phi(y)} = -2kH_{ij}y_ie^{-k\phi(y)},\mbox{ so that }
\frac{1}{2}G^{ij}\del_j e^{-k\phi(y)} = -ky_ie^{-k\phi(y)}.
\end{equation}
Hence
\begin{equation}
\int_P e^{-k\phi(y)}\rho(y)(f(y)-f(0))\,\rd y = -k^{-1}\int_P \frac{1}{2}G^{ij}\del_i
e^{-k\phi}\rho\delta_j f\rd y = k^{-1}\int_P e^{-k\phi} \cD f\rd y,
\end{equation}
where we have neglected the boundary term
\begin{equation}
\int_{\partial P} G^{ij}\nu_j e^{-k\phi}\rho\delta_if\,\rd \sigma.
\end{equation}
This is justified because $G^{ij}\nu_j = 0$ on the interior of each
facet of $P$ (see part (ii) of Lemma~\ref{l1.28.8.12}).

In summary, then, we have the formula
\begin{align}\label{e12.30.9.12}
\int_P e^{-k\phi(\alpha,y)}f(y)\rd y &= f(\alpha) \int
e^{-k\phi(\alpha,y)}\rho(y)\,\rd y + k^{-1} \int
  e^{-k\phi(\alpha,y)}\cD_\alpha f(y)\,\rd y  \nonumber \\
&+ \int e^{-k\phi(\alpha,y)}(1-\rho(y))\,\rd y.
\end{align}
We can now iterate: we apply \eqref{e12.30.9.12} to the second term on
the right-hand side, (i.e. with $f(y)$ replaced by $\cD_\alpha
f(y)$).  After $N$ steps, we obtain the formula
\begin{align}\label{e13.30.9.12}
\int_P e^{-k\phi(\alpha,y)}f(y)\,\rd y &=
\cA_{N,\alpha}(f)(\alpha)\int e^{-k\phi(\alpha,y)}\rho(y)\,\rd y
+
k^{-N-1}\int_P e^{-k\phi(\alpha,y)}\cD^{N+1}_{\alpha}f(y)\,\rd y \nonumber \\
&+
\int_P e^{-k\phi(\alpha,y)}(1-\rho(y))\cA_{N,\alpha}f(y)\,\rd y.
\end{align}
From the proof of Proposition~\ref{p3.24.8.11}, we have
\begin{equation}\label{e1.4.10.12}
\int_P e^{-k\phi(\alpha,y)}(1-\rho(y))\,\rd y =
e^{-ck\epsilon}\eta_k(\alpha)
\end{equation}
for some $c>0$ where $\eta_k(\alpha)$ is smooth in $\alpha$ and
uniformly bounded in $k$ provided that $\alpha$ moves in some smaller
subset $\frac{1}{4}V$, say. Moreover, \eqref{e2.28.8.12} and
\eqref{e3.28.8.12} imply that
\begin{equation}\label{e2.4.10.12}
\left(\int_p e^{-k\phi(\alpha,y)}\,\rd y\right)^{-1}\int_P
e^{-k\phi(\alpha,y)}\,\rd y = 1 + e^{-ck\epsilon}\eta_k'(\alpha),
\end{equation}
where $\eta'_k$ has the same properties as $\eta_k$.

Hence, dividing by $\int e^{-k\phi}$, we get \eqref{e21.15.8.11}, where
\begin{align}\label{e15.30.9.12}
\cR_{N+1,k,\alpha}(f)  &=
k^{N+1}\eta_k(\alpha)e^{-ck\epsilon}\cA_{N,\alpha}f(\alpha) \nonumber
\\
&+ 
\left(\int_p e^{-k\phi(\alpha,y)}\right)^{-1}
\int e^{-k\phi(\alpha,y)}\left\{\cD^{N+1}_\alpha f(y)
+ k^{N+1}(1-\rho(y))\cA_{N,\alpha}f(y)
\right\}
\,\rd y.
\end{align}
It is clear from this formula that for fixed $f$ and $k$,
$\cR_{N+1,k,\alpha}$ is a distribution in $f$ depending smoothly on
$\alpha$. 

On the other hand, by \eqref{e5.28.8.12} and \eqref{e1.4.10.12} we
have 
\begin{equation}\label{e16.30.9.12}
|\cR_{N+1,k,\alpha}(f)| \leq C(\sup|\cD^{N+1}_\alpha f| + e^{-ck\epsilon} \sup|\cA_{N,\alpha}f|)
\end{equation}
directly from \eqref{e15.30.9.12}.
Since the operator $\delta$ has the same boundedness properties as a differential operator,
\begin{equation}
    \|\delta f \|_r \leq A\| f \|_{C^{r+1}}
\end{equation}
for $r\geq 0$, where $A=A_r$ is some constant, it follows that the operator $\cD$ behaves like a
second-order operator in the sense that we have an estimate:
\begin{equation}
    \|\cD f\|_r \leq A \|f \|_{C^{r+2}}
\end{equation}
(for some different constant $A = A_r$).  It follows by induction that
$\sup| \cD_\alpha^m f|$ is bounded by a multiple of
$\|f\|_{C^{2m+2}}$. The estimate
\begin{equation}\label{e17.30.9.12}
\|\cR_{N+1,k,\alpha}f| \leq C\|f\|_{C^{2N+2}}
\end{equation}
now follows by combining these observations with \eqref{e16.30.9.12}.
\end{proof}

To obtain Proposition~\ref{p2.24.8.11} from this expansion, we take
$N=1$, getting
\begin{equation}
\langle |e_{\alpha,k}|^2,f \rangle = 
f(\alpha)  + k^{-1}\cD f(\alpha) + k^{-2}\cR_{2}(f),
\end{equation}
and it follows from the formula for $\cR_2$ that this error term has
the stated properties.  It remains to compute $\cD f(\alpha)$.   In
local coordinates, with $\alpha$ corresponding to $0$ as before, 
\begin{equation}
\delta_jf(y) = \del_j f(0) + \frac{1}{2}\del_i\del_j f(0)y^i +
O(|y|^2)
\end{equation}
from the Taylor expansion of $f(y) - f(0)$ and, after a little
manipulation, we obtain
\begin{equation}
\cD f(0) = \frac{1}{4}\del_i\del_j(f G^{ij})(0) -
\frac{1}{4}f(0)\del_i\del_j G^{ij}(0).
\end{equation}
The formula \eqref{ep2.24.8.11} now follows from Abreu's formula \eqref{e1.4.9.13}.

\section{Proof of Theorem~\ref{t1.5.9.12}}

\label{sec_asymp}
We now bring the ideas of the previous sections together to prove
Theorem~\ref{t1.5.9.12}.  Recall that the setting for that Theorem was
as follows:
\begin{itemize}
\item A toric variety $X$ with moment polytope $P$;
\item A face $F = Q_1\cap  \cdots Q_q$ with $Q_j$ defined by $x_j=0$
  for $j=1,\ldots,q$.
\item The subpolytope $P_t = P \cap \{\Phi(x)\geq t\}$, where $\Phi(x)
  = x_1+\cdots+x_q$.
\end{itemize}
Then our partial density function is given by
\begin{equation} \label{e31.26.8.11}
\hat{\rho}_{tk}(y) = \sum_{\alpha\in P_t\cap\Lambda^*_k}
|e_{\alpha,k}(y)|^2
\end{equation}
(regarded, by abuse of notation, as a function of $y\in
P$), where the terms in the sum are given by \eqref{e1.10.8.11}.

Define 
\begin{equation}
C_t = \Phi^{-1}[0,t), N_t = \Phi^{-1}(t), P_t = \Phi^{-1}(t,\infty).
\end{equation}
These are the subsets of $P$ corresponding respectively to the three
subsets $U_t$, $S_t$ and $D_t$ in \eqref{three-subsets}.  By
torus-invariance, it is clearly enough to prove the `pushed-down'
version of Theorem~\ref{t1.5.9.12}, i.e. to work entirely on $P$.

We begin by establishing the first part of the
Theorem~\ref{t1.5.9.12}, namely the equations \eqref{e4.21.9.12} and
\eqref{e5.21.9.12} restated as follows:

\begin{prop}\label{p1.1.10.12}
Let $K$ be any compact subset
of $C_t$. Then
\begin{equation}\label{e41.4.9.12}
\hat{\rho}_{tk}(x) = O(k^{-\infty})\mbox{ uniformly for }x\in K
\end{equation}
and if $K'$ is a compact subset of $P_t$, then
\begin{equation}\label{e41a.4.9.12}
\hat{\rho}_{tk}(x) = \rho(x) + O(k^{-\infty})\mbox{ uniformly for }x\in K'
\end{equation}
\end{prop}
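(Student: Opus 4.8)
The plan is to deduce both estimates from a single Gaussian bound, using only that $\Phi$ is affine-linear on $P$, the lower bound \eqref{e21b.7.9.12} for $\phi$, and the mass lower bound \eqref{e5.28.8.12}. First I would reduce each statement to a bound on a sum of mass-densities. For \eqref{e41.4.9.12} one works with $\hat{\rho}_{tk}(x)=\sum_{\alpha\in P_t\cap\Lambda^*_k}|e_{\alpha,k}(x)|^2$ directly. For \eqref{e41a.4.9.12} one notes that the lattice points of $P$ that do \emph{not} contribute to $\hat{\rho}_{tk}$ are exactly those $\alpha$ with $\Phi(\alpha)<t$, so that $\rho_k(x)-\hat{\rho}_{tk}(x)=\sum_{\alpha\in C_t\cap\Lambda^*_k}|e_{\alpha,k}(x)|^2$. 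Thus in both cases it is enough to show that $\sum_{\alpha\in S}|e_{\alpha,k}(x)|^2=O(k^{-\infty})$, uniformly for $x$ in the given compact set, where the lattice points in $S$ are uniformly separated in $\Phi$-value from that set.

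Next I would make the separation quantitative. Since $\Phi(x)=x_1+\cdots+x_q$ is affine-linear on $P$ it is Lipschitz, say $|\Phi(p)-\Phi(p')|\le L|p-p'|$. In the first case the compact set $K$ sits inside the open set $C_t=\{\Phi<t\}$, so $\Phi\le t-\delta$ on $K$ for some $\delta>0$, while $\Phi(\alpha)\ge t$ for each $\alpha\in S=P_t\cap\Lambda^*_k$; in the second case $K'\subset P_t=\{\Phi>t\}$, so $\Phi\ge t+\delta$ on $K'$, while $\Phi(\alpha)<t$ for each $\alpha\in S=C_t\cap\Lambda^*_k$. In either case $|\alpha-x|\ge \delta/L$ for all relevant $\alpha$ and all $x$ in the compact set.

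Then I would combine this with the available estimates. By \eqref{e21b.7.9.12} (valid also for the extended, possibly infinite, values of $\phi$), $\phi(\alpha,x)\ge c|\alpha-x|^2\ge c\delta^2/L^2$, so the numerator of $|e_{\alpha,k}(x)|^2$ is at most $e^{-ck\delta^2/L^2}$; by \eqref{e5.28.8.12}, applied uniformly in $\alpha$ and using $q\le n$, the denominator $(2\pi)^n\int_P e^{-k\phi(\alpha,z)}\,\rd z$ is at least $c'k^{-n}$. Hence $|e_{\alpha,k}(x)|^2\le Ck^{n}e^{-ck\delta^2/L^2}$, and since $S$ has $O(k^{n})$ elements, summing gives $\sum_{\alpha\in S}|e_{\alpha,k}(x)|^2\le C'k^{2n}e^{-ck\delta^2/L^2}=O(k^{-\infty})$, with all constants independent of $x$ in the compact set.

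I expect the only delicate point to be the uniformity in $\alpha$ of the mass lower bound \eqref{e5.28.8.12}: in its derivation both the constant and the box $V$ depend on the adapted coordinate chart chosen near $\alpha$, so one must use that $P$ is covered by finitely many adapted charts to obtain a constant $c'$ valid for all $\alpha\in P$ at once. The remaining steps are routine substitutions.
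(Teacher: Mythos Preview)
Your proof is correct and follows essentially the same route as the paper's: separate the lattice points from the compact set by a positive Euclidean distance, use the quadratic lower bound on $\phi$ together with the mass lower bound \eqref{e5.28.8.12} to get an exponentially small estimate on each $|e_{\alpha,k}(x)|^2$, and then sum over $O(k^n)$ lattice points. The paper's proof is terser (it simply quotes $|e_{\alpha,k}|^2\le Ce^{-kd(K,P_t)}$ without spelling out the numerator/denominator split), but your more explicit treatment---including the remark on uniformity in $\alpha$ via a finite cover by adapted charts---is exactly the content being invoked.
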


\begin{proof}
If $x\in K$ and $\alpha \in P_t$, we have
\begin{equation}
|e_{\alpha,k}|^2 \leq  Ce^{-kd(K,P_t)},
\end{equation}
where $d$ denotes Euclidean distance.  Summing over lattice points of
$D_t$ gives the result, since the number of lattice points is
$O(k^n)$.  The proof of the other part is the same, the roles of $C_t$
and $P_t$ being interchanged.
\end{proof}
\begin{rmk}
  If $X = \bC  P^n$ with the Fubini--Study metric, then more precise
  pointwise estimates of this kind are given in \cite{MR2015330}, at
  least for points $x$ in the interior of $P$.  There, $D_l$ is called
  the `forbidden region'.
\end{rmk}

\subsection{The Euler--Maclaurin formula}

In order to obtain an expansion in powers of $k$ from
\eqref{e31.26.8.11}, we use the Euler--Maclaurin formula to replace
the sum over lattice points by an integral, up to a controlled error
term.  The version we use is as follows:
\begin{thm}\label{EMthrm}
Let $P$ be a convex integral polytope of dimension $n$, with integral conormals.  Let Lebesgue measure $\rd x$
be normalized so that the integral of the unit cube in $\bZ^n$ has
volume $1$ and let $\rd\sigma$ stand for the Leray form of $\del P$.
Then we have
\begin{equation}\label{e51.5.9.12}
\sum_{P\cap\Lambda_k} f(\alpha) = k^n\int_{P} f(x)\,\rd x +
\frac{k^{n-1}}{2}\int_{\partial P} f\,\rd \sigma + k^{n-2}E_k(P,f)
\end{equation}
where $E_k(P,f)$ is bounded by a multiple of $\Vol(P)\|f\|_{2n}$ (the
$C^{2n}$-norm of $f$ again).
\end{thm}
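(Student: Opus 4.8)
The plan is to prove the Euler--Maclaurin formula \eqref{e51.5.9.12} by reducing it, via a partition of unity on $P$ subordinate to a finite atlas of adapted coordinate charts, to a purely local statement in a model corner, and then to prove that local statement by separation of variables. First I would fix a finite cover of $P$ by adapted coordinate charts $\{V_\iota\}$ of the type constructed in \S\ref{S_comb_toric} (so that in each chart a neighbourhood of the relevant face looks like $\bR^q_{\geq 0}\times\bR^{n-q}$ with $\Lambda^*$ identified with $\bZ^n$), together with a smooth partition of unity $\{\chi_\iota\}$ subordinate to it. Writing $f=\sum_\iota \chi_\iota f$ and using linearity of both sides of \eqref{e51.5.9.12} in $f$, it suffices to prove the formula for each $\chi_\iota f$, i.e. for a function supported in a single chart near a face. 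One must check that the $C^{2n}$-norm estimate is stable under this reduction: $\|\chi_\iota f\|_{2n}\leq C\|f\|_{2n}$ with $C$ depending only on the fixed partition of unity, and the volume factor only makes the bound more generous, so summing over the finitely many charts reproduces the claimed bound.

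Next I would establish the one-dimensional prototype and its corner analogue. For the half-line, the classical Euler--Maclaurin summation formula with remainder gives, for $g\in C^{2}_c([0,\infty))$,
\begin{equation}
\frac{1}{k}\sum_{m\in\bZ_{\geq0}} g(m/k) = \int_0^\infty g(x)\,\rd x + \frac{1}{2k}g(0) + \frac{1}{k^2}e(g),
\end{equation}
where $|e(g)|\leq C\|g''\|_{L^1}$; for the full line the boundary term is absent. For the half-line alone one actually only needs two derivatives, but to get a uniform $k^{n-2}$ remainder after taking products of $n$ such factors one is led to control roughly two derivatives per coordinate direction, hence the $C^{2n}$-norm in the statement. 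The key point is that the model region $\bR^q_{\geq0}\times\bR^{n-q}$ is a product, the lattice $\bZ^n$ is a product, and for a function of product type the sum and the integral both factorize; for a general $f$ on the model region one writes $f$ as its Taylor-type expansion in the transverse directions plus a controlled error, or more simply applies the one-dimensional formula one coordinate at a time (Fubini for both the sum and the integral), at each stage producing the volume term, a half-weighted boundary term supported on the coordinate hyperplane just ``peeled off'', and a remainder of one higher order in $k^{-1}$. Collecting the $n$ steps, the $k^{n-1}$ terms assemble exactly into $\tfrac12\int_{\partial P}f\,\rd\sigma$ once one observes that in adapted coordinates the Leray form $\rd\sigma_a$ of the facet $x_j=0$ is precisely $\prod_{i\neq j}\rd x_i$, so the chart-by-chart boundary contributions glue to the intrinsically defined $\rd\sigma$.

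The main obstacle, and the place where care is genuinely needed, is the bookkeeping at the overlaps and at higher-codimension faces. When a chart meets a vertex (or any codimension-$\geq 2$ face), a lattice point may lie on several coordinate hyperplanes at once, and naively iterating the one-dimensional formula would overcount: the codimension-two corrections (and the measure $\rd\tau$ on the $(n-2)$-skeleton) appear at order $k^{n-2}$ and must be absorbed into $E_k(P,f)$ rather than written out — this is fine for the present statement, which only claims the first two terms, but one must verify that all such contributions are indeed $O(k^{n-2}\Vol(P)\|f\|_{2n})$ and not larger. Concretely I would do the iteration in a fixed order of the coordinates and simply bound every term generated after the first two; the half-line remainder estimate $|e(g)|\leq C\|g''\|_{L^1}$ is uniform in $k$, and since each coordinate contributes at most two derivatives and the supports are contained in $P$, the accumulated error is bounded by $C\,\Vol(P)\,\|f\|_{C^{2n}}$ with $C$ absolute. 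Summing the finitely many chart contributions and using that the partition of unity is fixed then yields \eqref{e51.5.9.12} with the asserted remainder bound; alternatively one may simply cite the polytope Euler--Maclaurin formula in the form given by, e.g., the references on toric geometry collected in \S\ref{s_back}, the above being a self-contained derivation in the Delzant case which is all that is needed here.
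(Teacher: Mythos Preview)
Your approach---partition of unity plus iterated one-dimensional Euler--Maclaurin in adapted charts---is genuinely different from the paper's, and for Delzant (hence simple) polytopes it is a perfectly sound argument.  The paper instead introduces the ``half-weighted'' lattice sum
\[
S_k(f,P) = \sum_{P\cap\Lambda_k} f(\alpha) - \tfrac12\sum_{\partial P\cap\Lambda_k} f(\alpha),
\]
observes that $S_k(f,\cdot)$ is additive under subdivision into lattice polytopes with disjoint interiors up to an $O(k^{n-2})$ error (the discrepancy lives on the $(n-2)$-skeleton of the common boundary), triangulates $P$ into lattice simplices, and then quotes the Euler--Maclaurin formula for simplices from Karshon--Sternberg--Weitsman.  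Finally the boundary lattice sum is converted to $\int_{\partial P}f\,\rd\sigma$ by the same trick applied one dimension down.

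The comparison matters because the theorem is stated for an \emph{arbitrary} convex integral polytope, and the paper's remark immediately following it stresses that the non-simple case is needed in \S\ref{Kstab}.  Your adapted-chart argument relies on the local model $\bR^q_{\geq 0}\times\bR^{n-q}$ with $\Lambda^*\cong\bZ^n$, which is exactly the Delzant condition; near a non-simple vertex there is no such model, so the partition-of-unity reduction breaks down there.  Your closing remark that ``the Delzant case is all that is needed here'' is therefore not correct: in \S\ref{Kstab} the polytope $P(t)$ can be non-simple (this is noted explicitly in \S\ref{s_leray}), and the references you propose to cite (Guillemin, Karshon--Sternberg--Weitsman) also assume simplicity.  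The simplex decomposition buys exactly this extra generality, since simplices are always Delzant and the additivity-up-to-$(n-2)$-skeleton works regardless of whether $P$ is simple.

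In summary: your argument is correct and self-contained for Delzant $P$, and could replace the paper's proof for the application in \S\ref{sec_asymp}; but it does not establish the theorem as stated, and the missing non-simple case is genuinely used later.
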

\begin{rmk}  Since many results of this kind are available in the
  recent literature, we shall be content to sketch a proof. Following
  the method used by Donaldson in the appendix of 
  \cite{Don_ToricScal}, we reduce to the case that $P$ is a lattice
  simplex. Then we are content to quote the Euler--Maclaurin formula
with remainder from \cite{Karshon_2003} to complete the proof.

We note references such as \cite{Guillemin_Polytopes,
  Karshon_2003}) give complete asymptotic expansions of lattice sums
at least if $P$ is a simple polytope.  The theorem stated here applies
to any lattice polytope, and this will be important in \S\ref{Kstab}.
This simple statement \eqref{e51.5.9.12} should be viewed as an
extension of the `trapezium rule' (with remainder) for approximate
integration of functions of one variable.
\end{rmk}

\begin{proof}
Given $f\in C^\infty(P)$, consider on the one hand
$$
S_k(f,P) = \sum_{\alpha \in P\cap \Lambda_k} f(\alpha) - \frac{1}{2}\sum_{\alpha
  \in \del P\cap \Lambda_k} f(\alpha).
$$
and 
$$
I(f,P) = \int_P f(x)\,\rd x.
$$
We aim to show first that
\begin{equation}\label{e21.30.9.12}
S_k(f,P) = k^nI(f,P) + O(k^{n-2}),
\end{equation}
where the $O(k^{n-2})$ error term stands for a distribution supported
on $P$ and bounded by a multiple of $\|f\|_{C^{2n}}$.  For this, note
first that if $P$ is decomposed as a union of polytopes $P_1$ and $P_2$
with disjoint interiors, then 
$$
S_k(f,P) = S_k(f,P_1) + S_k(f,P_2) + O(k^{n-2})
$$
because the number of points of $\Lambda_k$ where there is a discrepancy is contained
in the $(n-2)$-skeleton of $P_1\cap P_2$ and hence bounded by a
multiple of $k^{n-2}$.  (The $O(k^{n-2})$ error is also bounded by a
multiple of $\sup_P(f)$.) In this situation we also have
$$
I(f,P) = I(f,P_1) + I(f,P_2).
$$
From these considerations, since we can decompose our polytope into
integral simplices with disjoint interiors, it is enough to establish
\eqref{e21.30.9.12} for integral simplices.  Although it is not hard to
prove this by induction, we may simply invoke, for example, Theorem~1
of \cite{Karshon_2003} which, after rescaling, gives
\begin{equation}
S_k(f,\Sigma) = k^n I(f,\Sigma) + k^{n-2}E_k(\Sigma,f)
\end{equation}
for any integer simplex $\Sigma$, where $E_k$ is a distribution on
$\Sigma$ which is bounded by a multiple of $\|f\|_{2n}$.

This is not quite the result we need, but if $F$ is any facet of $P$,
then we have
\begin{equation}
S_k(f,F) = \sum_{\alpha\in F\cap \Lambda_k} f(\alpha) + O(k^{n-2})
\end{equation}
(again because there are only $O(k^{n-2})$ points in the
$(n-2)$-skeleton of $F$) and by what we've just proved,
\begin{equation}
S_k(f,F) = k^{n-1}I(f,F) + O(k^{n-3}).
\end{equation}
Combining these observations with\eqref{e21.30.9.12},
we see that we can replace the sum over lattice points of the boundary
by the corresponding integral, up to an allowable error term.

This completes our sketch proof.
\end{proof}

\subsection{Divergence theorem}

Apart from the Euler--Maclaurin formula, we also need a formula for
the integral of the divergence of a vector field over the intersection
of a hyperplane with $P$.  In fact it is natural to consider a
one-parameter family of parallel hyperplanes
\begin{equation}\label{e1.21.9.12}
W(t) = \{ \Phi(x) = t\},
\end{equation}
where $\Phi$ is an affine-linear function on $\bR^n$.  In this
situation we make the following definition:
\begin{dfn} The number $c\in \bR$ is called a critical value of the
  one-parameter family $P\cap W(t)$ if $W(c)$ contains a vertex of $P$.  If
  $c$ is not a critical value of $P\cap W(t)$, we call it a regular
  value of $P\cap W(t)$.
\label{d1.28.9.12}
\end{dfn}

We note that if $t_0$ is not a critical value of $P\cap W(t)$, then for
sufficiently small $\delta>0$, $P\cap W(t)$ and $P\cap W(s)$ are
combinatorially identical and have the same conormals for $s,t \in
(t_0-\delta,t_0+\delta)$. It follows that if $f$ is a smooth function
on $P$, then
\begin{equation}
t \mapsto \int_{P\cap W(t)} f\,\rd \sigma_t
\end{equation}
(which is continuous for all $t$) is smooth for $t \in
(t_0-\delta,t_0+\delta)$.  Here $\rd \sigma_t$ is the Leray form of
$P\cap W(t)$, i.e.
$$
\rd \sigma_t \,\rd \Phi = \rd x
$$
along $W(t)$, where $\rd x$ is short-hand for the standard euclidean
measure on $\bR^n$.

\begin{lem}\label{l11.28.8.12}
Let $P$ be a convex polytope in $\bR^n$ and $W(t)$ be as
above, and suppose that $t_0$ is a regular value of this one-parameter
family. Let $\xi$ be a smooth vector field on $P$.  Let $P(t)$ be the
part of $P$ cut off by the half-space $\{\Phi(x)\geq t\}$.

Denote by $\rd \sigma$ the Leray form of the codimension-1 part of the
boundary of $P(t)$ and by $\rd \tau$ the Leray form of the codimension-2 part of
the boundary.
Then for all $t$ in a sufficiently small neighbourhood of $t_0$,
\begin{align}\label{e8.2.11.11}
	\int_{W(t)} \div(\xi)\,\rd \sigma = 
\frac{\rd}{\rd t} \int_{W(t)} \langle \xi,\rd\Phi\rangle \,\rd
  \sigma  - \int_{\partial W(t)} \langle
  \xi,\nu\rangle \rd\tau.
\end{align}
\end{lem}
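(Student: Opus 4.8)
The plan is to compute both sides of \eqref{e8.2.11.11} by reducing to the ordinary divergence theorem applied to the region $P(t)$, and to track carefully the contributions of the cut face $W(t)\cap P$ as $t$ varies. First I would write the usual divergence theorem on the $n$-dimensional region $P(t)=P\cap\{\Phi\ge t\}$: for the vector field $\xi$,
\[
\int_{P(t)}\Div(\xi)\,\rd x = \int_{\partial P(t)}\langle\xi,\nu\rangle\,\rd\sigma,
\]
where $\nu$ is the outward conormal. The boundary $\partial P(t)$ splits into two pieces: the part $\partial P\cap\{\Phi\ge t\}$ coming from the original facets of $P$, and the ``new'' facet $W(t)\cap P$, on which the outward conormal is $-\rd\Phi/|\rd\Phi|$ and the Leray form is $\rd\sigma_t$ (normalized by $\rd\sigma_t\,\rd\Phi=\rd x$). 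Since $t_0$ is a regular value, for $t$ near $t_0$ the combinatorics of $P(t)$ are constant, so all these integrals depend smoothly on $t$.

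Next I would differentiate the divergence-theorem identity in $t$. Differentiating $\int_{P(t)}\Div(\xi)\,\rd x$ in $t$ produces $-\int_{W(t)\cap P}\Div(\xi)\,\rd\sigma_t$ (the boundary term from the moving face, with a sign because $P(t)$ shrinks as $t$ increases — here I would use the coarea formula $\rd x = \rd\sigma_t\,\rd\Phi$ to write $\int_{P(t)}g\,\rd x = \int_t^\infty\bigl(\int_{W(s)\cap P}g\,\rd\sigma_s\bigr)\rd s$ and apply the fundamental theorem of calculus). On the other side, the contribution of the fixed facets $\int_{\partial P\cap\{\Phi\ge t\}}\langle\xi,\nu\rangle\,\rd\sigma$ differentiates, again by coarea along those facets, into $-\int_{\partial W(t)}\langle\xi,\nu\rangle\,\rd\tau$, where $\rd\tau$ is the Leray form of the codimension-$2$ intersection $\partial P\cap W(t)$; and the contribution of the moving face $-\int_{W(t)\cap P}\langle\xi,\rd\Phi\rangle\,\rd\sigma_t$ (taking $|\rd\Phi|$ into the normalization so that the measure is $\rd\sigma_t$ rather than the metric surface measure) differentiates into $-\frac{\rd}{\rd t}\int_{W(t)}\langle\xi,\rd\Phi\rangle\,\rd\sigma_t$. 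Collecting terms and cancelling the common sign gives exactly \eqref{e8.2.11.11}.

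The main obstacle is bookkeeping rather than analysis: one must be scrupulous about (a) the two distinct notions of boundary measure — the Leray form $\rd\sigma_t$ adapted to $\Phi$ versus the ambient Euclidean surface measure, which differ by the factor $|\rd\Phi|$ — and (b) the orientation/sign conventions for the outward conormal on the two types of facets of $P(t)$, so that the pairing $\langle\xi,\rd\Phi\rangle$ (not $\langle\xi,\nu\rangle$ with a unit normal) appears on the right. A clean way to handle both at once is to fix affine coordinates with $\Phi=x_1$, so that $W(t)=\{x_1=t\}$, $\rd\sigma_t = \rd x_2\cdots\rd x_n$, and the identity \eqref{e8.2.11.11} becomes the statement that differentiating $\int_{\{x_1\ge t\}\cap P}\partial_1\xi_1\,\rd x + \sum_{j\ge 2}\int \partial_j\xi_j\,\rd x$ in $t$ reproduces the two right-hand terms; in these coordinates each step is an elementary application of the fundamental theorem of calculus in $x_1$ together with the $(n-1)$-dimensional divergence theorem on the slices. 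Regularity of $t_0$ is used precisely to guarantee that $W(t)\cap P$ and $\partial P\cap W(t)$ vary smoothly, so that the differentiations under the integral sign are legitimate.
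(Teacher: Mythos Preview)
Your argument is correct and uses the same ingredients as the paper---the divergence theorem together with the coarea formula $\rd x = \rd\sigma_t\,\rd\Phi$---but organized slightly differently. The paper applies the divergence theorem directly to the thin slab $C(h)=\overline{P(t)\setminus P(t+h)}$, obtaining boundary contributions from the two parallel faces $W(t)$, $W(t+h)$ and from the ``side'' facets $Z(h)\subset\partial P$; it then expands each term to first order in $h$ via coarea, divides by $h$, and lets $h\to 0$. You instead apply the divergence theorem once to the full region $P(t)$ and then differentiate the resulting identity in $t$. These are formally equivalent (your $t$-derivative is precisely the paper's limit of difference quotients over slabs), and your version is arguably tidier since the sign and normalization bookkeeping is done once rather than twice. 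The paper's slab approach has the minor advantage of making the smoothness in $t$ near a regular value $t_0$ completely explicit: one sees directly that all the $O(h^2)$ remainders are controlled because $P(t)$ and $P(t+h)$ are combinatorially identical, whereas in your approach this enters as the justification for differentiating under the integral sign.
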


\begin{proof} Fix $t$ and $t+h$ near $t_0$ so that the interval
  $[t,t+h]$ contains no critical value (we assume $h>0$ here). Define
  the polytope $C(h)$ to be the closure of $P(t)\setminus P(t+h)$.
  The facets of $C(h)$ are the two parallel facets $P\cap W(t)$ and
  $P\cap W(t+h)$ together with $\{ C(h)\cap G\}$, where $G$ is a facet
  of $P$.  Denote by $Z(h)$ the union of these `side' facets of
  $C(h)$.  Applying the divergence theorem to $C(h)$, we have
\begin{equation}\label{e9.2.11.11}
\int_{C(h)} \div(\xi)\,\rd x = 
- \int_{P\cap W(t)}\langle \xi,\rd\Phi\rangle \,\rd \sigma_t +
\int_{P\cap W(t+h)}\langle \xi,\rd\Phi\rangle \,\rd \sigma_{t+h}
- \int_{Z(h)} \langle \xi,\nu\rangle\,\rd \sigma.
\end{equation}
We will now calculate the limit as $h\to 0$ of this equation.

By definition of Leray form, for any smooth function on $P$,
\begin{equation}\label{e2.21.9.12}
\int_{C(h)} f\,\rd x = \int_{t}^{t+h} \left( \int_{P\cap W(s)} f\,\rd
    \sigma_s\right)\,\rd s,
\end{equation}

Thus if $h$ is
small, we have
\begin{equation}\label{e3.21.9.12}
\int_{C(h)} f\,\rd x = h \int_{P\cap W(t)} f\,\rd \sigma_t  + O(h^2).
\end{equation}
Similarly, for each facet $G$ of $P$ meeting $W(t)$, we have
\begin{equation}\label{e4a.21.9.12}
\int_{G\cap C(h)} f\,\rd \sigma = h \int_{G\cap W(t)} f\,\rd \tau_t  + O(h^2).
\end{equation}
so that
\begin{equation}\label{e4c.21.9.12}
\int_{Z(h)} f\,\rd \sigma = h \int_{\partial(P\cap W(t))} f\,\rd \tau_t  + O(h^2).
\end{equation}

Thus the LHS of
\eqref{e9.2.11.11} is equal to
\begin{equation}\label{e15.5.9.12}
h\int_{W}\div(\xi)\,\rd\sigma +O(h^2)
\end{equation}
while the first two terms on the RHS combine to give
\begin{equation}\label{e16.5.9.12}
h\,\frac{\rd}{\rd t}\int_{W(t)}\langle \xi,\nu\rangle \,\rd
  \sigma + O(h^2).
\end{equation}
Finally the integral over $Z(h)$ is
\begin{equation}\label{e17.5.9.12}
h\int_{\partial{W}}\langle \xi,\nu\rangle\,\rd \tau + O(h^2).
\end{equation}
Combining these three equations, dividing by $h$, and taking $h$ to $0$
thus gives \eqref{e8.2.11.11} as required.
\end{proof}

With these preliminaries we can now establish Theorem~\ref{t1.5.9.12}

\subsection{Completion of Proof of Theorem~\ref{t1.5.9.12}}

We now complete the proof of Theorem~\ref{t1.5.9.12} by deriving the
distributional formula \eqref{e54.5.9.12}.  Note first that that
formula is written on $X$ rather than downstairs on $P$. It is clear
that $\hat{\rho}_{tk}$ is $T^n$-invariant, so it is enough to obtain
\eqref{e54.5.9.12} for functions $f$ of the form
$\mu^{*}(\overline{f})$, where $f\in C^\infty(P)$.  Since the volume
of each fibre of $\mu$ is $(2\pi)^n$, 
\begin{equation}
\int_X \mu^*(f)\omega^n/n! = (2\pi)^n\int_P f\,\rd x.
\end{equation}
Thus, identifying $\hat{\rho}_{tk}$ and $\hat{a}_{tk}$  with their
respective push-downs to $P$, we see
that \eqref{e54.5.9.12} is equivalent to the formula
\begin{equation}\label{e1.5.10.12}
\langle \hat{\rho}_{tk},f\rangle =
k^n\left( \int_{P_t} f + \frac{1}{2k}\left(\int_{P_t} sf\,\rd x + \langle
  \hat{a}_t,f\rangle\right)  + \frac{1}{k^{2}}\langle R_k,
f\rangle\right)\;\; (f\in C^\infty(P))
\end{equation}
where $R_k$ is an appropriate error term and
\begin{equation}\label{e2.5.10.12}
\langle \hat{a}_t , f\rangle = \int_{N_t} f\,\rd \sigma  -
\frac{1}{2}\frac{\rd}{\rd t}\int_{N_t} f|\rd\Phi|^2_g\,\rd \sigma.
\end{equation}
In the remainder of this section we shall always think of
$\hat{\rho}_{tk}$ and $\hat{a}_{tk}$ as distributions on $P$ rather
than on $X$.

With these preliminaries understood, we just combine the
Euler--Maclaurin formula \eqref{e51.5.9.12} with the distributional
expansion \eqref{ep2.24.8.11}, getting
\begin{equation}\label{e35.5.9.12}
\langle \hat{\rho}_{tk},f\rangle = k^n\int_{D_t} f
+ \frac{1}{2}k^{n-1}\left(\int_{\partial P_t} f\,\rd\sigma  +
  \int_{P_t} (s - \frac{1}{2}\partial_i\partial_j(G^{ij}f))\right) + O(k^{n-2}).
\end{equation}

Now, by the divergence theorem,
\begin{equation}\label{e25.4.9.12}
\int_{P_t} \partial_i\partial_j(G^{ij}f) = - \int_{\partial
 P_t} \partial_j(G^{ij}f)\nu_i\,\rd \sigma = 
- \int_{N_t}
\partial_j(G^{ij}f)\nu_i\,\rd \sigma 
- \int_{\del
P^+_t} \partial_j(G^{ij}f)\nu_i\,\rd \sigma,
\end{equation}
where we have written $P_t^+ = P_t\setminus N_t$ for the `old' part of
the boundary of $P_t$.
Consider the second term on the RHS, and more specifically a facet $F$
of $P$.  We may suppose that $F$ is given by $x_1=0$, so the conormal
is $e_1$ and $\rd \sigma = \rd x_2\ldots \rd x_n$.

From \eqref{e31.4.9.12},
\begin{equation}\label{e31a.4.9.12}
Ge_1 = 2x_1e_1 + x_1\begin{pmatrix} D & 0 \\ 0 & 1\end{pmatrix}
S(D)e_1 = (2x_1 + O(x_1^2))e_1 + x_1\eta(x),
\end{equation}
say, where the vector $\eta(x)$ is orthogonal to $e_1$. Hence
$$
fG\nu = f(2x_1 + O(x_1^2))e_1 + x_1f\eta(x)
$$
and so
$$
\Div(fG\nu) = 2f + O(x_1).
$$
By a similar argument to that used to prove \eqref{e22.7.9.12}, this is true
uniformly up to the boundary of $F$ and so the contribution from this
facet to the integral over $\partial P_t^+$ is $2\int_F f\,\rd
\sigma$.  Hence \eqref{e25.4.9.12} simplifies to
\begin{equation}\label{e26.4.9.12}
\int_{P_t} \partial_i\partial_j(G^{ij}f) = - 
\int_{N_t} \partial_j(G^{ij}f)\nu_i\,\rd \sigma 
- 2\int_{\del
P^+_t} f\,\rd \sigma.
\end{equation}

Now use Lemma~\ref{l11.28.8.12} on the integral over $N_t$ to get
\begin{equation}\label{e2.4.9.12}
\int_{N_t} \partial_j(G^{ij}f)\nu_i\,\rd \sigma 
= \frac{\rd }{\rd t}
\int_{N_t} fG^{ij}\nu_i\nu_j\,\rd \sigma_t,
\end{equation}
the contribution from the boundary of $N_t$ being zero.  The reason
for this is as follows.  Each boundary facet of $N_t$ is of the form
$N_t \cap F$, where $F$ is a facet of $P$. By the lemma, the
integrand will be $fG^{ij}\nu_i\nu'_j$, where $\nu'$ is the conormal
to $F$. But we have seen that $G^{ij}\nu'_j=0$ on $F$ in the previous
part. Hence the boundary contribution is zero.  Combining these
calculations, we arrive at
\begin{equation}\label{e61.4.9.12}
\int_{P_t} \del_i\del_j(fG^{ij})\,\rd x =
-\frac{\rd}{\rd t}\int_{N_t} f|\rd\Phi|^2_g\,\rd \sigma_t -
2\int_{\partial P_t^+} f\,\rd \sigma.
\end{equation}

\label{s_complete}
Combining equations \eqref{e35.5.9.12} and \eqref{e61.4.9.12}, we
obtain
\begin{eqnarray}
\langle \hat{\rho}_{tk},f\rangle &=&  k^n\int_{P_t} f\,\rd x +
\frac{1}{2}k^{n-1}\int_{P_t} fs\,\rd x \nonumber \\
&+& \frac{1}{2}k^{n-1}\left(\int_{\partial P_t} f\,\rd \sigma -
 \int_{\partial P_t^+} f\,\rd \sigma - \frac{1}{2}\frac{\rd}{\rd
   t}\int_{N_t} f|\rd\Phi|^2_g\,\rd \sigma \right) +
O(k^{n-2}). \label{e36.5.9.12} 
\end{eqnarray}
Now the first two terms in the middle line combine to give
$\int_{N_t}f\,\rd \sigma$, completing the proof of
\eqref{e1.5.10.12}.  The bound 
$$
\langle R_k,f\rangle \leq C\|f\|_{C^{n+4}}
$$
follows from the bounds on the error terms in \eqref{ep2.24.8.11} and \eqref{e51.5.9.12}.

\subsection{Complete asymptotic expansion}

We note that these methods yield a complete distributional asymptotic
expansion for $\hat{\rho}_{tk}$:
\begin{thm}\label{t1.5.10.12}
There exists a sequence $\xi_j$ of distributions on $P$ such that for each
$N>0$, we have
\begin{equation}\label{e5.5.10.12}
\langle \hat{\rho}_{tk},f\rangle = k^n\left\{ \sum_{j=0}^N\langle \xi_j,
f\rangle k^{-j} + k^{-N-1}\langle R_{N+1,k},f\rangle\right\},
\end{equation}
where $R_{N+1,k}$ is a distribution on $P$ satisfying
$$
\langle R_{N+1,k},f\rangle \leq C\|f\|_{C^{N'}}
$$
for some $N'$ depending upon $N$.
\end{thm}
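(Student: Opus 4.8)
The plan is to reuse the two inputs that already powered the proof of Theorem~\ref{t1.5.9.12}, namely the recursive pointwise expansion of Proposition~\ref{p4.24.8.11} and the Euler--Maclaurin formula Theorem~\ref{EMthrm}, but now carried out to arbitrary order. First I would fix $N$ and apply Proposition~\ref{p4.24.8.11} with this $N$ to each summand $|e_{\alpha,k}|^2$ in $\hat{\rho}_{tk}=\sum_{\alpha\in P_t\cap\Lambda^*_k}|e_{\alpha,k}|^2$, obtaining, for every test function $f$,
\begin{equation}
\langle \hat{\rho}_{tk},f\rangle = \sum_{\alpha\in P_t\cap\Lambda^*_k}\left(\sum_{m=0}^N k^{-m}(\cD^m_\alpha f)(\alpha) + k^{-N-1}\cR_{N+1,k,\alpha}(f)\right).
\end{equation}
The point is that $\alpha\mapsto (\cD^m_\alpha f)(\alpha)$ is a smooth function on $P$ (this is asserted in the statement of Proposition~\ref{p4.24.8.11}, the operator $\cD_\alpha$ depending smoothly on $\alpha$), and we have the uniform bound $|(\cD^m_\alpha f)(\alpha)|\leq C_m\|f\|_{C^{2m+2}}$ together with $|\cR_{N+1,k,\alpha}(f)|\leq C_N\|f\|_{C^{2N+2}}$.

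Next I would apply the Euler--Maclaurin formula Theorem~\ref{EMthrm} to each of the $N+1$ smooth functions $g_m(x):=(\cD^m_x f)(x)$ on the lattice polytope $P_t$ (after rescaling so that $kP_t$ is integral, which is legitimate since $\Phi$ has rational coefficients, exactly as in \S\ref{sec_asymp}). Each application produces a term $k^n\int_{P_t}g_m\,\rd x$, a boundary term $\tfrac{k^{n-1}}{2}\int_{\partial P_t}g_m\,\rd\sigma$, and an error bounded by a multiple of $\Vol(P_t)\|g_m\|_{C^{2n}}\leq C\|f\|_{C^{2m+2n+2}}$. Collecting the contributions according to the total power of $k$ produces the coefficient distributions: $\xi_j$ will be, schematically,
\begin{equation}
\langle\xi_j,f\rangle = \int_{P_t}g_j\,\rd x + \tfrac{1}{2}\int_{\partial P_t}g_{j-1}\,\rd\sigma + (\text{lower-order Euler--Maclaurin corrections applied to }g_{j-2},\dots),
\end{equation}
where the ``lower-order corrections'' are the further terms in the complete Euler--Maclaurin expansion for simple polytopes (\cite{Guillemin_Polytopes, Karshon_2003}). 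Each such term is a sum of integrals over faces of $P_t$ of differential operators applied to the $g_m$, hence a distribution on $P$ with the required bound $\langle R_{N+1,k},f\rangle\leq C\|f\|_{C^{N'}}$ with $N' = N'(N)$ determined by how many derivatives the operators $\cD^m$ and the Euler--Maclaurin weights consume. Finally I would truncate both expansions consistently at order $k^{-N-1}$, absorbing the two separate remainders (the $\cR_{N+1,k,\alpha}$ summed over $O(k^n)$ lattice points, contributing $k^{n-N-1}$, and the Euler--Maclaurin errors) into a single $R_{N+1,k}$ with the stated bound.

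The main obstacle is bookkeeping rather than a genuine analytic difficulty: one must make sure that summing the pointwise remainder $\cR_{N+1,k,\alpha}(f)$ over the $O(k^n)$ lattice points in $P_t$, and then applying Euler--Maclaurin to the \emph{smooth} coefficient functions $g_m$ rather than to $|e_{\alpha,k}|^2$ directly, does not degrade the order of the error below $k^{n-N-1}$. This is handled by the uniformity in $\alpha$ of the bounds in Proposition~\ref{p4.24.8.11} together with the smoothness of $\alpha\mapsto(\cD^m_\alpha f)(\alpha)$, but keeping track of precisely which Sobolev/$C^r$-norm of $f$ controls the final error (so as to state a valid $N'$) requires care; one can afford to be generous, taking e.g. $N' = 2N+2n+2$. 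A secondary point worth a remark is that the coefficient distributions $\xi_j$ are genuinely distributional (supported partly on $N_t$ and on the lower-dimensional faces of $P_t$ where $\partial N_t$ meets $\partial P$), exactly as in the $j=1$ case already computed, and that one should verify the analogue of the cancellation between $\int_{\partial P_t}f\,\rd\sigma$ and the boundary contribution from $\int_{P_t}\partial_i\partial_j(G^{ij}f)$ observed in \eqref{e61.4.9.12} persists at all orders, which it does because $G^{ij}\nu_j$ vanishes on every old facet of $P$ by Lemma~\ref{l1.28.8.12}.
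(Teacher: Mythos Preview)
Your proposal is correct and follows essentially the same approach as the paper: expand each $\langle|e_{\alpha,k}|^2,f\rangle$ to order $N$ via Proposition~\ref{p4.24.8.11}, then apply the complete Euler--Maclaurin expansion for simple polytopes (from \cite{Karshon_2003, Guillemin_Polytopes}) to each smooth coefficient function $\alpha\mapsto(\cD_\alpha^m f)(\alpha)$ and collect terms. The paper's sketch is terser and omits your bookkeeping on $N'$ and the boundary cancellations (the latter are unnecessary here since only the existence of the $\xi_j$ is asserted, not their explicit form), but the strategy is identical.
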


\begin{proof}
We sketch the proof as we will not use the result in the rest of the
paper.  Fix a test-function $f$ and an integer $N>0$.
Proposition~\ref{p4.24.8.11} gave an asymptotic expansion of $\langle
|e_{\alpha,k}|^2,f\rangle$ to order $N$, and we took care to note that
all the coefficients as well as the error term depend smoothly upon
$\alpha$.  On the other hand, from the results of for example
\cite{Karshon_2003} or \cite{Guillemin_Polytopes}, for any smooth
function $u(\alpha)$ on $P_t$, we have an asymptotic expansion of the
lattice sum
\begin{equation}\label{e7.5.10.12}
\sum_{P_t\cap\Lambda_k^*} u(\alpha) = k^n\sum_{j=0}^N I_j(u)k^{-j} +
k^{-N-1}\langle S_{N+1,k},u\rangle,
\end{equation}
where the $I_j$ and
$S_{N+1,k}$ are certain distributions on $P$ with the error term
$S_{N+1,k}$ satisfying
$$
\langle S_{N+1,k},u\rangle \leq C\|f\|_{C^{N'}}
$$
for some integer $N'$ depending upon $N$.  Because all the
coefficients in our expansion of $\langle |e_{\alpha,k}|^2,f\rangle$
are smooth in $\alpha$ , we may substitute \eqref{e21.15.8.11}
into \eqref{e7.5.10.12}, getting an asymptotic expansion of the form \eqref{e5.5.10.12}
\end{proof}

\subsection{Combinatorial interpretation}

Finally, we give a combinatorial interpretation of the
sub-leading term if $f=1$:
\begin{prop}\label{p1.9.9.12}
If $t>0$ is a regular value of the family $P_t$, then we have
\begin{equation}\label{e1.9.9.12}
\Vol(\partial P_t^+)  = \int_{D_t}s\,\rd x - \frac{1}{2}\frac{\rd}{\rd
  t}\int_{N_t} f|\rd\Phi|^2_g\,\rd \sigma_t.
\end{equation}
\end{prop}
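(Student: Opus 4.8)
The plan is to read \eqref{e1.9.9.12} off the identity \eqref{e61.4.9.12}, which was already established in \S\ref{s_complete} on the way to Theorem~\ref{t1.5.9.12}, by specialising the test function to $f=1$ (which, from the statement, is what is intended throughout \eqref{e1.9.9.12}). Recall that
\[
\int_{P_t}\del_i\del_j(fG^{ij})\,\rd x = -\frac{\rd}{\rd t}\int_{N_t}f|\rd\Phi|_g^2\,\rd\sigma_t - 2\int_{\del P_t^+}f\,\rd\sigma
\]
was obtained via the divergence theorem together with Lemma~\ref{l11.28.8.12} and the vanishing $G^{ij}\nu_j=0$ along the facets of $P$ from Lemma~\ref{l1.28.8.12}(ii); the hypothesis that $t$ is a regular value of the family $P_t$ is exactly what makes the $\rd/\rd t$-term and the splitting of $\del P_t$ into the ``old'' part $\del P_t^+$ and the ``new'' facet $N_t$ legitimate for $t'$ near $t$. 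Setting $f=1$ and invoking Abreu's formula \eqref{e1.4.9.13}, namely $s=-\tfrac12\del_i\del_j G^{ij}$, the left-hand side becomes $-2\int_{P_t}s\,\rd x$ (as throughout this subsection, $D_t$ is identified with its push-down $P_t\subset P$). Rearranging then gives
\[
\Vol(\del P_t^+) = \int_{P_t}s\,\rd x - \frac12\frac{\rd}{\rd t}\int_{N_t}|\rd\Phi|_g^2\,\rd\sigma_t,
\]
which is precisely \eqref{e1.9.9.12}.

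An equivalent and more transparently combinatorial derivation is to compute the coefficient of $k^{n-1}$ in $\dim\hat{V}_{tk}$ in two ways. Since $\langle|e_{\alpha,k}|^2,1\rangle=1$ for every $\alpha$, we have $\langle\hat{\rho}_{tk},1\rangle=\dim\hat{V}_{tk}$; pairing the distributional expansion \eqref{e1.5.10.12} with $f=1$ therefore identifies the sub-leading coefficient as $\tfrac12\bigl(\int_{P_t}s\,\rd x + \langle\hat{a}_t,1\rangle\bigr)$, where $\langle\hat{a}_t,1\rangle = \int_{N_t}\rd\sigma - \tfrac12\frac{\rd}{\rd t}\int_{N_t}|\rd\Phi|_g^2\,\rd\sigma_t$ by \eqref{e2.5.10.12}. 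On the other hand, $\dim\hat{V}_{tk}$ is the number of lattice points of $P_t\cap\Lambda^*_k$, so applying the Euler--Maclaurin formula \eqref{e51.5.9.12} to a suitable integral dilate of $P_t$ with $f=1$ gives the same coefficient as $\tfrac12\Vol(\del P_t) = \tfrac12\bigl(\Vol(\del P_t^+) + \int_{N_t}\rd\sigma\bigr)$. Equating the two expressions and cancelling $\tfrac12\int_{N_t}\rd\sigma$ yields \eqref{e1.9.9.12} once more.

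There is no essential obstacle here: both routes merely recombine results proved earlier. The only point that needs a little care is, in the second derivation, that Theorem~\ref{EMthrm} really does apply to $P_t$ --- since $tk\in\bN$, a dilate $NP_t$ is an integral polytope whose facets have integral primitive conormals, the old ones being conormals of $P$ and the new one being $\pm\rd\Phi=\pm(1,\dots,1,0,\dots,0)$ in adapted coordinates, and whose boundary splits, up to a set of $\rd\sigma$-measure zero, as $\del P_t^+\cup N_t$; all of this uses precisely the hypothesis that $t$ is a regular value. In the first derivation the only subtlety, already handled in \S\ref{s_complete}, is the vanishing of the iterated-boundary contributions, which again rests on $G^{ij}\nu_j=0$ along the facets of $P$ (Lemma~\ref{l1.28.8.12}(ii)).
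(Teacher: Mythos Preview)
Your proposal is correct and in fact offers two valid derivations. The second one---comparing the coefficient of $k^{n-1}$ in $\dim\hat{V}_{tk}$ as computed from the distributional expansion \eqref{e1.5.10.12} with $f=1$ against the Euler--Maclaurin formula applied directly to the constant function on $P_t$---is precisely the paper's argument. The paper adds only the remark that this comparison is first carried out for rational $t$ (so that a dilate of $P_t$ is genuinely a lattice polytope and Theorem~\ref{EMthrm} applies) and then extended to all regular $t$ by continuity; you handle the rationality point yourself in your final paragraph.

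Your first derivation, however, is genuinely different and more direct: specialising \eqref{e61.4.9.12} to $f=1$ and invoking Abreu's formula \eqref{e1.4.9.13} gives the identity immediately, with no detour through lattice-point asymptotics and hence no need for the rational-$t$-then-continuity step. The price is that this route, while slicker, obscures the ``combinatorial interpretation'' that the paper's subsection title advertises: the paper's proof makes explicit that \eqref{e1.9.9.12} arises by matching the differential-geometric sub-leading term against the purely combinatorial boundary volume from Euler--Maclaurin.
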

\begin{proof}
If we plug $f=1$ into the above formula, we know that we get the
leading coefficients $A_0(t)$ and $A_1(t)$ as the leading
coefficients.  On the other hand, the dimension of the space of
sections is the number of lattice points in $P_t$ and this is
approximated by the Euler--Maclaurin formula with $f=1$.  Comparing coefficients now
gives the result, at least if $t$ is rational. Since both sides are
continuous in $t$, the result is true for all regular values $t$.
\end{proof}

\section{More general partial density functions and toric K-stability}
\label{Kstab}
In this section we want to generalize Theorem~\ref{t1.5.9.12} to more general
subspaces of $V_k$, defined by a general rational convex polytope
${P}_t$ of $P$, obtaining in particular a distributional asymptotic
expansion for the partial density function $\hat{\rho}_{tk}$ defined
in this situation.

We shall then use this distributional expansion to prove that toric
cscK implies toric K-stable in a sense to be explained below.  These
results appear as Theorems~\ref{t1.9.9.12} and \ref{t1.25.2.12}.

We begin with a careful discussion of what we shall call `polytopes
with moving facets'.  We have already seen the simplest example where
a 1-parameter family of polytopes ${P}_t$ is defined by
intersecting a given polytope $P$ with a variable half-space
$\{\Phi(x)\geq t\}$. This is a polytope with a single moving facet.
We must generalize this to allow for an intersection of $P$ with an
arbitrary finite collection of half-spaces $\{\Phi_a(x)\geq t\}$ where each
of the $\Phi_a$ is an affine function of $x$.  

\subsection{Polytopes with moving facets}
\label{s_mov}
Let $P\subset \bR^n$ be a convex integral polytope. Suppose
given a finite collection $\Phi_a(x)$, $(a\in A)$ of affine-linear functions, with
rational coefficients. For each $t\in \bR$, define
\begin{equation}\label{e1.21a.9.12}
P(t) = P \cap \bigcap_{a} \{\Phi_a(x)\geq t\}.
\end{equation}
We assume that ${P}(0) = P$. For any given $t$, define $A(t) \subset A$ to be
the subset of `effective constraints', i.e. $a\in A_t$ if $P(t)\cap
\{\Phi_a(x)=t\}$ is a facet of $P$.  With the assumption $P(0) = P$ in
force, it follows that $A(0) = \emptyset$. 

\vspace{11pt}

\noindent{\bf Notational Remark}: In this section we have started to
denote the $t$-dependent polytope as $P(t)$ rather $P_t$, and
similarly for other $t$-dependent quantities that in the previous
section were denoted by a subscript $t$. It is hoped that this will
make this section more readable.

\vspace{11pt}

Because the $\Phi_a$ are rational, given a positive rational number
$t$, there is a positive integer $N$ such that for all integers $k$
divisible by $N$, $P(t)$ is a lattice polytope for the rescaled lattice
$\Lambda^*_k$ (equivalently $kP(t)$ is a lattice polytope for
$\Lambda^*$ for such $k$).  

\begin{dfn}\label{d2.1.10.12}
For rational $t>0$ and integers $k>0$ as in the previous paragraph,
define $\hat{V}_{tk} \subset V_k$ to be the span of the sections of
$L^k$ corresponding to points in $P(t)\cap \Lambda_k^*$.  Similarly,
given a choice of toric metric $h$ on $L$, 
$\hat{\rho}_{tk}$ is defined to be the partial density function for
the subspace $\hat{V}_{tk}$.
\end{dfn}

In addition to the
combinatorial data $P$ and $P(t)$, we now choose a toric metric $g$ on $X$.
In order to state our generalization of Theorem~\ref{t1.5.9.12}, we
need the following notation and definitions.  
\begin{itemize}
\item Write $\partial P(t) = N(t) \cup \partial P^+(t)$, where
  $N(t)$---the `new part' of the boundary---is the union of those
  facets defined 
  by $\Phi_a(x)=t$ for $a\in A(t)$.  The Leray form is denoted, as
  usual, by $\rd \sigma$.
\item A positive measure $\rd p$ supported on the $(n-2)$-skeleton of
  $N(t)$ is defined as follows: for any pair of facets $N_a(t) =
  \{\Phi_a(x)=t\}$ and $N_b(t) = \{\Phi_b(x)=t\}$, define 
\begin{equation}\label{e11.5.10.12}
\rd p_{ab} = |\rd\Phi_a - \rd\Phi_b|^2_g\,\rd \tau_{ab}
\end{equation}
on $N_a(t)\cap N_b(t)$, and define $\rd p$ to be equal to $\rd p_{ab}$
on the relative interior of $N_a(t)\cap N_b(t)$.

\item The notion of a `regular value' and `critical value' of the
  family $P(t)$ defined below.
\end{itemize}
For the last of these, note that the set $A(t)$ is locally constant in
$t$ in general, but will jump at a finite number of values of
$t$. These will be called the critical values of the family and will
be denoted $c_j$:
\begin{equation}\label{e12.5.10.12}
0 = c_0 < c_1 < \cdots < c_m.
\end{equation}
By definition $P(t) = \emptyset$ for $t<0$ and $t> c_m$.  A value of
$t$ not equal to one of the $c_j$ will be called `regular'.   The
significance of this notion is that if $t,s\in (c_{j-1},c_j)$ for some
$j$, then the facets of $P(t)$ and $P(s)$ have the same conormals and
are combinatorially identical. In particular, the Leray forms 
depend smoothly upon $t$ for $t$ in any one of these intervals.

Given these preliminaries, we can state our generalization of
Theorem~\ref{t1.5.9.12} as follows.

\begin{thm}\label{t1.9.9.12}  Let $P$ and $P(t)$, and $\hat{\rho}_{tk}$
  be defined as above. Let
\begin{equation}\label{e1.1.10.12}
C(t) = P\setminus P(t),
\end{equation}
so that $P$ is decomposed into mutually disjoint subsets $C(t)$, $N(t)$
and $D(t) = P(t)\setminus N(t)$.

Then the partial density function $\hat{\rho}_{tk}$ associated to
$P(t)$ has the following properties:
\begin{equation} \label{e3.1.10.12}
\hat{\rho}_{tk}(x) = O(k^{-\infty})\mbox{ if }x\in C(t)
\end{equation}
and
\begin{equation}\label{e4.1.10.12}
\hat{\rho}_{tk}(x) = \rho_k(x) + O(k^{-\infty})\mbox{ if }x\in D(t).
\end{equation}
Moreover, the $O$'s are uniform if $x$ moves in a compact subset
respectively of $C(t)$ or $D(t)$.

Let $f \in C^{\infty}(P)$. Then, provided that $t$ is a regular value of
the family $P(t)$, 
\begin{equation}\label{e54a.5.9.12}
\langle \hat{\rho}_{tk},f\rangle =
k^n\left( \int_{D(t)} f + \frac{1}{2k}\left(\int_{D(t)} sf + \langle
  \hat{a}_t,f\rangle\right)  + O\left(\frac{1}{k^2}\right)\right),
\end{equation}
where
\begin{equation}
\langle \hat{a}_{tk},f\rangle =
\int_{N(t)} f\,\rd \sigma - \frac{1}{2}\frac{\rd}{\rd t}\int_{N(t)}
f|\rd \Phi|^2_g\,\rd \sigma - \int_{N(t)} f\,\rd p
\end{equation}
and where $O(1/k^2)$ denotes a distribution $R_k$ such that $\langle
R_k,f\rangle \leq Ck^{-2}\|f\|_{C^{n+4}}$.
\end{thm}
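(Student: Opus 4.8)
The strategy mirrors the proof of Theorem~\ref{t1.5.9.12} given in \S\ref{sec_asymp}, but with the combinatorial bookkeeping made more delicate by the presence of several moving facets. The starting point is unchanged: by torus-invariance we work downstairs on $P$, so that $\hat{\rho}_{tk} = \sum_{\alpha \in P(t)\cap\Lambda^*_k} |e_{\alpha,k}|^2$. The first two assertions \eqref{e3.1.10.12} and \eqref{e4.1.10.12} follow exactly as in Proposition~\ref{p1.1.10.12}: the mass-density $|e_{\alpha,k}|^2$ decays exponentially in $k\cdot d(\alpha, \mathrm{supp})^2$ by Lemma~\ref{l3.28.8.12}, so summing over the $O(k^n)$ lattice points of $P(t)$ (resp. $P \setminus P(t)$) kills the contributions from points of $C(t)$ (resp. $D(t)$), uniformly on compacta.

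For the distributional expansion \eqref{e54a.5.9.12}, I would combine the Euler--Maclaurin formula of Theorem~\ref{EMthrm} applied to the lattice polytope $P(t)$ (valid for $k$ divisible by the appropriate $N$, and then extended to all large $k$ since both sides are the restrictions of things polynomial in $k$) with the pointwise expansion \eqref{ep2.24.8.11} of Proposition~\ref{p2.24.8.11}. This yields, just as in \eqref{e35.5.9.12},
\begin{equation}
\langle \hat{\rho}_{tk},f\rangle = k^n\int_{D(t)} f + \tfrac{1}{2}k^{n-1}\left(\int_{\partial P(t)} f\,\rd\sigma + \int_{P(t)}\bigl(sf - \tfrac{1}{2}\partial_i\partial_j(G^{ij}f)\bigr)\right) + O(k^{n-2}),
\end{equation}
with the error controlled by $\|f\|_{C^{n+4}}$ by the same counting of error contributions. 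The work is then entirely in rewriting the $k^{n-1}$-coefficient. As before, apply the divergence theorem to $\int_{P(t)}\partial_i\partial_j(G^{ij}f)\,\rd x$, splitting $\partial P(t) = N(t)\cup\partial P^+(t)$. On each facet of $\partial P^+(t)$ (an old facet of $P$), the identity \eqref{e31.4.9.12} for $Ge_1$ shows $\Div(fG\nu) = 2f + O(x_1)$, contributing $-2\int_{\partial P^+(t)} f\,\rd\sigma$. The new contribution, absent in the single-facet case, comes from the boundary terms along the $(n-2)$-faces $N_a(t)\cap N_b(t)$ where two moving facets meet.

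\textbf{The main obstacle} is precisely the identification of the extra term $\int_{N(t)} f\,\rd p$. In \S\ref{sec_asymp} the analogue of Lemma~\ref{l11.28.8.12} was applied with the boundary integral over $\partial N_t$ vanishing, because $\partial N_t$ lay in old facets of $P$ where $G^{ij}\nu'_j = 0$. Now $N(t)$ itself has boundary pieces $N_a(t)\cap N_b(t)$ lying in \emph{two} new facets, and the relevant conormal quantity is $G^{ij}(\rd\Phi_a)_i(\rd\Phi_b)_j$ rather than something that vanishes. One must therefore run the divergence-theorem argument of Lemma~\ref{l11.28.8.12} facet-by-facet on $N(t)$: for each $a\in A(t)$ the term $\int_{N_a(t)}\partial_j(G^{ij}f)\nu_i\,\rd\sigma$ becomes $\tfrac{\rd}{\rd t}\int_{N_a(t)} fG^{ij}(\rd\Phi_a)_i(\rd\Phi_a)_j\,\rd\sigma$ plus a sum over $b\neq a$ of boundary integrals over $N_a(t)\cap N_b(t)$ of $fG^{ij}(\rd\Phi_a)_i(\rd\Phi_b)_j$, together with boundary integrals over $N_a(t)\cap G$ for old facets $G$ (which vanish, as before, since $G^{ij}\nu_j=0$ on $G$). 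Adding over $a\in A(t)$ and noting $|\rd\Phi_a - \rd\Phi_b|^2_g = G^{ij}(\rd\Phi_a)_i(\rd\Phi_a)_j + G^{ij}(\rd\Phi_b)_i(\rd\Phi_b)_j - 2G^{ij}(\rd\Phi_a)_i(\rd\Phi_b)_j$, one sees that the mismatch between the diagonal terms $\sum_a$ and the sum of $\tfrac{\rd}{\rd t}\int_{N(t)} f|\rd\Phi|^2_g\,\rd\sigma$ (interpreted facet-wise) is exactly accounted for by $-2\int_{N(t)} f\,\rd p$ with $\rd p$ as in \eqref{e11.5.10.12}. The regularity hypothesis on $t$ is used here precisely to ensure that the $\rd\sigma$ and $\rd\tau_{ab}$ vary smoothly and that $\tfrac{\rd}{\rd t}$ makes sense. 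Assembling the pieces, the $k^{n-1}$-coefficient collapses to $\int_{N(t)} f\,\rd\sigma + \int_{D(t)} sf + \langle\hat{a}_{tk},f\rangle$ minus the already-cancelled $\partial P^+(t)$ contributions, which is the claimed formula. The error bound $\langle R_k,f\rangle \leq Ck^{-2}\|f\|_{C^{n+4}}$ propagates from \eqref{ep2.24.8.11} and Theorem~\ref{EMthrm} verbatim.
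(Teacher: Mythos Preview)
Your overall strategy matches the paper's: reduce to $P$, establish \eqref{e3.1.10.12}--\eqref{e4.1.10.12} via Proposition~\ref{p1.1.10.12}, combine Proposition~\ref{p2.24.8.11} with Theorem~\ref{EMthrm} to reach the analogue of \eqref{e35.5.9.12}, and rewrite $\int_{P(t)}\partial_i\partial_j(G^{ij}f)$ using the divergence theorem and Lemma~\ref{l11.28.8.12} facet by facet on $N(t)$. The one genuinely new ingredient, however, is not handled correctly in your sketch.

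To apply Lemma~\ref{l11.28.8.12} to $N_a(t)$ one must exhibit it as the moving slice of some auxiliary $n$-polytope whose \emph{other} facets are independent of $t$. The paper constructs these explicitly: decompose $P(t)\setminus P(s)$ (for $s$ close to $t$) into cells $C_a(t,s)$, each the convex hull of $N_a(t)$ and $N_a(s)$. The side facet of $C_a$ along which it meets $C_b$ lies in the $t$-independent hyperplane $\{\Phi_a=\Phi_b\}$, so its inward conormal is $\rd\Phi_b-\rd\Phi_a$, not $\rd\Phi_b$ as you write. With $\xi = fG\rd\Phi_a$, the boundary integrand on $N_a\cap N_b$ is therefore $f\langle\rd\Phi_a,\rd\Phi_b-\rd\Phi_a\rangle_g$; symmetrizing over the pair $(a,b)$ yields directly $+f|\rd\Phi_a-\rd\Phi_b|^2_g\,\rd\tau_{ab}$, i.e.\ the measure $\rd p$. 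Your integrand $f\langle\rd\Phi_a,\rd\Phi_b\rangle_g$, after symmetrization and your polarization identity, would instead leave a spurious residual $-\sum_{a<b}\int_{N_a\cap N_b}f\bigl(|\rd\Phi_a|^2_g+|\rd\Phi_b|^2_g\bigr)\,\rd\tau_{ab}$, and the appeal to an unspecified ``mismatch between the diagonal terms'' does not dispose of it. (If instead you intend to vary only the $a$-th constraint while freezing the others at level $t$, then your conormal $\rd\Phi_b$ is correct, but the derivative appearing in Lemma~\ref{l11.28.8.12} is $\partial/\partial t_a$, not the full $\rd/\rd t$; the chain-rule correction passing from one to the other supplies exactly the missing $|\rd\Phi_a|^2_g$ terms on $N_a\cap N_b$---but this has to be said explicitly.) Either way, the identification of the correct conormal along the shared faces, which is the only step going beyond \S\ref{sec_asymp}, needs to be made precise.
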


\begin{proof}
Equations \eqref{e3.1.10.12} and \eqref{e4.1.10.12} are established
following exactly the same argument as for
their counterparts in Proposition~\ref{p1.1.10.12}.

Moreover, the strategy for obtaining \eqref{e54a.5.9.12} is exactly the same as
for \eqref{e1.5.10.12}: the only difference is that the calculation of
\begin{equation}
\label{71.4.9.12}
\int_{P_t} \del_i\del_j(G^{ij}f)\,\rd x
\end{equation}
is more complicated.  

Indeed, the first step is the same and the analogue of
\eqref{e26.4.9.12} here is
\begin{equation}
\label{e72.4.9.12}
\int_{P(t)} \del_i\del_j(G^{ij}f)\,\rd x
= - \int_{N(t)} \del_i(f G^{ij}\del_j\Phi)\,\rd \sigma - 2\int_{\del
  P^+_{t} } f\,\rd \sigma.
\end{equation}
On the right-hand side, we have used an obvious shorthand: the first
term should more properly be written as
\begin{equation}\label{e21.1.10.12}
- \sum_{a\in A_t} \int_{N_a(t)} \del_i(fG^{ij}\del_j\Phi_a)\,\rd
\sigma_a.
\end{equation}
Consider a typical term 
\begin{equation}\label{e1.7.10.12}
\int_{N_a(t)} \del_i(fG^{ij}\del_j\Phi_a)\,\rd\sigma_a
\end{equation}
in this sum. We want to use
Lemma~\ref{l11.28.8.12} to simplify this integral.  For this, let $s>t$
and consider $P(t)\setminus P(s)$, which we think of as a
neighbourhood of $N(t)$. Decompose this set as a union of polytopes
$C_a(t,s)$; $C_a(t,s)$ is defined to be the convex hull of $N_a(t)$
and $N_a(s)$.  Because $t$ is a regular point of the family $P_t$,
$N_a(t)$ and $N_a(s)$ are combinatorially identical and their boundary
facets have the same conormals for all $s$ sufficiently close to
$t$.  See Figure~1 for an illustration of this
construction: $N(t)$ is $BCDEF$, $N(s)$ is $HJILM$; and the
$C_a(t,s)$ here are $BCJHB$, $CDIJC$, $DELID$ and $EFMLE$.

\begin{figure}[!ht]
    \vspace{0.5cm}
{
    \includegraphics[width=75mm]{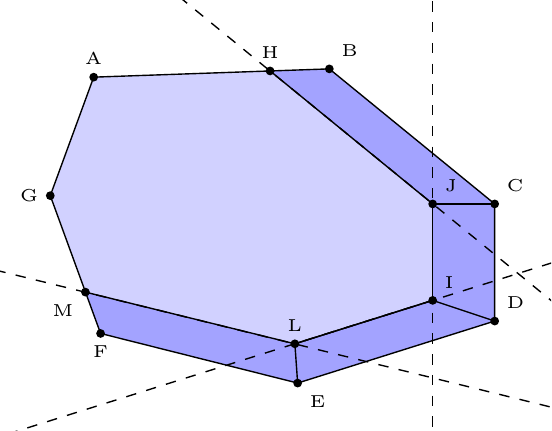} 
\label{figures0}
}
\caption{}
\end{figure}

 We claim that we can apply Lemma~\ref{l11.28.8.12} with $P$
replaced by $C_a(t,s)$ and $W(t)$ replaced by $N_a(t)$. For this to be
the case, we need to know that the `side faces' of $C_a(t,s)$ do not
vary as $t$ is varied. Now a typical side facet is either the
intersection of $C_a(t,s)$ with an old boundary facet $F$ of $\partial
P^+$ or else the intersection $Z_{ab}(t,s)$ with $C_b(t,s)$ for some
$b\neq a$. It is clear that $F$ does not move with $t$.  As for
$Z_{ab}(t,s)$, any point $x$ on it satisfies $\Phi_a(x) = \Phi_b(x) = t'$,
for $t\leq t'\leq s$. In particular, the hyperplane containing the facet $Z_{ab}(t, s)$ is given by $\Phi_b(x) -
\Phi_a(x)=0$ and its inward conormal is $\rd\Phi_b-\rd \Phi_a$.
So such a facet also does not move with $t$. 

Thus we can apply Lemma~\ref{l11.28.8.12} to obtain
\begin{align}\label{e1.6.9.12}
    \int_{N_a(t)} \div(\xi)\,\rd \sigma_a =&
\frac{\rd}{\rd t}\int_{N_a(t)} \langle \xi,\rd \Phi_a\rangle\,\rd
\sigma_a
- \int_{\partial N_a(t)\cap \partial P_t^+} \langle
\xi,\nu\rangle\,\rd \tau\\
&- \sum_b \int_{N_a(t)\cap N_b(t)} \langle
\xi,\rd(\Phi_b-\rd\Phi_a\rangle\,\rd \tau_{ab}
\end{align}
for any smooth vector field $\xi$ on $P$.

In the particular case that $\xi = fG\rd\Phi_a$, the
integral over $\del N_a(t)\cap \del P_t^+$ vanishes for the usual
reason that $G\nu=0$ on any facet of $\del P^+$ with conormal $\nu$
(cf.\ \eqref{e22.7.9.12}). 
Thus we obtain
\begin{equation}
\int_{N_a(t)} \div(fG\rd\Phi_a)\,\rd \sigma_a =
\frac{\rd}{\rd t}\int_{N_a(t)} |\rd\Phi_a|^2_g\,\rd \sigma_a
- \sum_b \int_{N_a(t)\cap N_b(t)} \langle f\langle \rd\Phi_a,
\rd(\Phi_b-\Phi_a)\rangle_g\,\rd \tau_{ab}.
\end{equation}
Summing over $a$, we get the formula
\begin{equation}
\int_{N(t)} \div(fG\rd\Phi)\,\rd \sigma =
\frac{\rd}{\rd t}\int_{N(t)} |\rd\Phi|^2_g\,\rd \sigma
+ \sum_{a<b} \int_{N_a(t)\cap N_b(t)}  f|\rd(\Phi_b-\Phi_a)|^2_g\,\rd \tau_{ab}.
\end{equation}
The last term on the RHS here is $\int_{N(t)}f\,\rd p$ by definition, so
by combining this equation with \eqref{e72.4.9.12}, we obtain
\begin{equation}\label{e1.30.8.12}
\int_{P(t)} \del_i\del_j(fG^{ij})\,\rd x
= - \frac{\rd}{\rd t} \int_{\partial P(t)} f|\rd\Phi|^2_g\,\rd \sigma
- 2\int_{\partial P_l} f\,\rd \sigma - \int_{N(t)} f\,\rd p.
\end{equation}
Substitution of this into \eqref{e35.5.9.12} gives
\begin{equation}\label{e36.6.9.12}
\langle \hat{\rho}_{tk},f\rangle =
k^n\int_{P(t)} f\,\rd x + \frac{1}{2}k^{n-1}\int_{P(t)} s
\,\rd x + \frac{1}{2}k^{n-1}\langle \hat{a}_t,f\rangle + k^{n-2}E_k[f],
\end{equation}
where
\begin{equation}\label{e37.9.9.12}
\langle \hat{a}_t,f\rangle = \int_{N(t)} f\,\rd \sigma - \frac{1}{2}\frac{\rd}{\rd t}\int_{N(t)}
f|\rd\Phi|^2_g\,\rd \sigma - \int_{N(t)} f\,\rd p
\end{equation}
as required.
\end{proof}

Setting $f=1$, we obtain the analogue of Proposition~\ref{p1.9.9.12}
in this case:

\begin{prop}\label{p2.9.9.12}
Let $g$ be any (smooth) toric K\"ahler metric on $X$ and let $s = s(g)$ be the scalar curvature. Denote by $\rd p_t$ the above measure.
If $t$ is not a critical value of the family $\{P(t)\}$,
\begin{equation}\label{e4.27.2.12}
\Vol(\partial P(t)^+) = \int_{P(t)} s - \frac{\rd}{\rd t}\int_{N(t)}
|\nu_t|^2\,\rd \sigma_t - \int_{N(t)} \rd p_t,
\end{equation}
where $s$ is the scalar curvature and $\nu_t$ is the conormal to $N_t$.
\end{prop}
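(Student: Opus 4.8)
The plan is to specialize Theorem~\ref{t1.9.9.12} to the test function $f\equiv 1$ and compare the resulting expansion with the Euler--Maclaurin expansion of $\dim\hat V_{tk}$, exactly as Proposition~\ref{p1.9.9.12} was deduced from Theorem~\ref{t1.5.9.12}. First I would fix a rational value $t$, and $k$ divisible by the relevant $N$ so that $kP(t)$ is a lattice polytope, and write down two expansions for the single quantity $\dim\hat V_{tk}$. On the one hand, integrating the partial density function over $X$ (equivalently, pairing its push-down with $1$ on $P$) returns $\dim\hat V_{tk}$, so \eqref{e54a.5.9.12} with $f=1$ gives
\[
\dim\hat V_{tk} = k^n\Vol(D(t)) + \tfrac12 k^{n-1}\Bigl(\int_{D(t)} s\,\rd x + \langle\hat a_t,1\rangle\Bigr) + O(k^{n-2}).
\]
On the other hand $\dim\hat V_{tk}$ is the number of points of $P(t)\cap\Lambda^*_k$, and applying Theorem~\ref{EMthrm} to the rescaled lattice polytope with $f=1$ gives
\[
\dim\hat V_{tk} = k^n\Vol(P(t)) + \tfrac12 k^{n-1}\Vol(\partial P(t)) + O(k^{n-2}).
\]

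Next I would compare coefficients. Since $N(t)$ has measure zero, $\Vol(D(t)) = \Vol(P(t))$ and the $k^n$-terms agree automatically; equating the $k^{n-1}$-terms yields $\Vol(\partial P(t)) = \int_{P(t)} s\,\rd x + \langle\hat a_t,1\rangle$. Now substitute the decomposition $\partial P(t) = N(t)\cup\partial P(t)^+$ (disjoint up to a null set, so $\Vol(\partial P(t)) = \Vol(N(t)) + \Vol(\partial P(t)^+)$) together with the explicit formula $\langle\hat a_t,1\rangle = \int_{N(t)}\rd\sigma - \tfrac12\tfrac{\rd}{\rd t}\int_{N(t)} |\rd\Phi|^2_g\,\rd\sigma - \int_{N(t)}\rd p$ provided by Theorem~\ref{t1.9.9.12}. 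The term $\Vol(N(t)) = \int_{N(t)}\rd\sigma$ cancels from both sides, and after rewriting the derivative term via the conormal $\nu_t$ to $N(t)$ one is left precisely with \eqref{e4.27.2.12}.

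This argument is a priori valid only at rational $t$, where $\hat V_{tk}$ and the Hilbert-polynomial comparison make sense, so the last step is to pass to all regular values by continuity: on each open interval between consecutive critical values the combinatorial type of $P(t)$ and all the Leray forms vary smoothly in $t$ (as recorded just before Theorem~\ref{t1.9.9.12}), hence both sides of \eqref{e4.27.2.12} are continuous there, and density of the rationals finishes the proof. I do not expect a genuine obstacle, since the analytic content is already carried by Theorems~\ref{t1.9.9.12} and \ref{EMthrm}; the only points needing a little care are that the Euler--Maclaurin comparison must be run on the rescaled lattice for which $P(t)$ is integral (so only $k$ divisible by $N$ enter), and the density/continuity passage from rational to arbitrary regular $t$.
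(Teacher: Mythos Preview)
Your proposal is correct and follows essentially the same approach as the paper: the paper simply says ``Setting $f=1$, we obtain the analogue of Proposition~\ref{p1.9.9.12} in this case,'' and the proof of Proposition~\ref{p1.9.9.12} is precisely the comparison of the $f=1$ specialization of the density expansion with the Euler--Maclaurin count of lattice points, followed by the rational-to-regular continuity argument you describe.
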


\subsection{Test configurations and K-stability}

In \cite{Don_ToricScal}, Donaldson proposed a definition of
K-stability for polarized varieties $(X,L)$ and, in the toric case,
related K-stability to boundedness properties of the Mabuchi
energy. We shall not reproduce the exact definition here.  The rough
idea is to consider `degenerations' of $(X,L)$ to a (possibly very singular)
polarized variety $(X_0,L_0)$ with a $\bC^\times$-action. In this situation
one can define the Donaldson--Futaki invariant $F_1$ of $(X_0,L_0)$; then $(X,L)$
is K-stable if $F_1 <0$ for all possible degenerations.  

In the toric case, there is a subclass of toric degenerations which
can be defined combinatorially as follows. Let the polarized toric
variety $(X,L)$ correspond to the convex integral polytope $P$.
Now, given the data of the previous section, define
$\tPhi_a(x,t) = \Phi_a(x) -t$ and consider the polytope
$\Gamma\subset \bR^{n+1}$ (the last variable being $t$),
\begin{equation}\label{e11.21.9.12}
\Gamma = P\times [0,\infty) \cap \bigcap_{a} \{\tPhi(x,t) \geq 0\}.
\end{equation}
It is convenient to augment the defining equations for $\Gamma$ by
explicitly including $\tPhi_0(x,t) = t$ which defines the base $t=0$ of $\Gamma$.

We refer to any such $\Gamma$ as (the polytope corresponding to) a
{\em toric test configuration} for $(X,L)$.  We note that the `roof'
of $\Gamma$ (see Figure 2(a)) is a union of $n$-dimensional convex
polytopes $\tilde{N}_a$. Then $X_0$ in this case is obtained by
gluing together the toric varieties corresponding to the $\tilde{N}_a$
to obtain a singular variety.

By definition, a {\em product configuration} arises when $\Gamma$ is
$P\times [0,\infty)$ cut off (possibly obliquely) by a single affine function
$\tPhi(x,t)\geq 0$---see Figure~2(b). A product configuration is
called {\em trivial} if the roof is 
horizontal, i.e. given by $P\times\{c\}$ for some $c>0$.

\begin{figure}[!ht]
    \vspace{0.5cm}
\hspace*{\fill}
\subfloat[][A polytope defining a toric test configuration and a
       non-critical level set of $P_{t}$.]{
    \includegraphics[width=55mm]{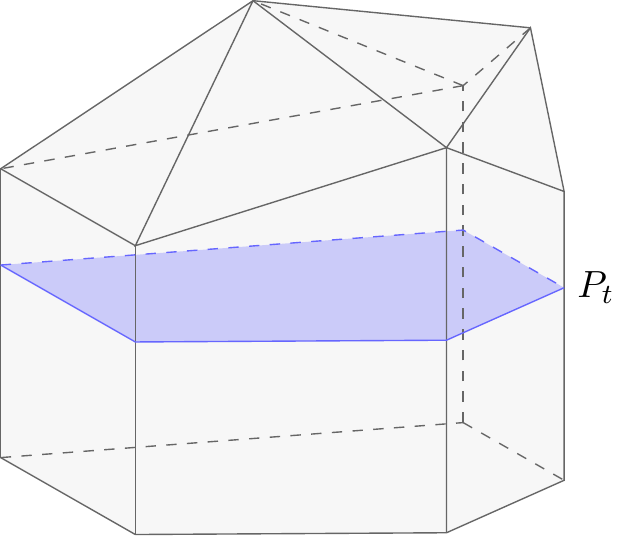}
    \label{fig:1a}
}
\hfill
\subfloat[][A polytope corresponding to a product configuration.]{
    \includegraphics[width=55mm]{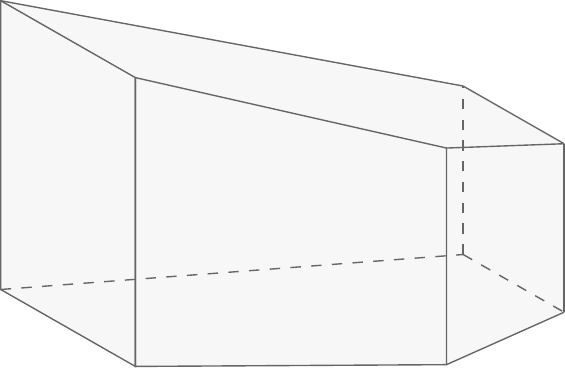}
    \label{fig:1b}
}
\hspace*{\fill}
\label{figures1ab}
\caption{}
\end{figure}

In \cite{Don_ToricScal}, the following combinatorial description of
the Donaldson--Futaki invariant was given:
\begin{prop}
Let $(X,L)$ be a toric variety with moment polytope $P$ and let
$\Gamma$ be a polytope defining a toric test configuration for
$(X,L)$.   Then the Donaldson--Futaki invariant of $\Gamma$ is the coefficient of
$k^{-1}$ in the asymptotic expansion of $w_k/kd_k$, where
\begin{equation}\label{e2.22.2.12}
w_k = N(k\Gamma) - N(kP),\;\; d_k = N(kP)
\end{equation}
and $N(A)$ denotes the number of lattice points in the integral polytope $A$.
\end{prop}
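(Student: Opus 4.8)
The plan is to unwind the definition of the Donaldson--Futaki invariant and reduce the statement to two lattice-point identities; this is essentially Donaldson's computation in \cite{Don_ToricScal}, whose argument I now recall. By definition, for a test configuration with central fibre $(X_0,L_0)$ carrying a $\bC^\times$-action one sets $d_k=\dim H^0(X_0,L_0^k)$ and lets $w_k$ be the total weight of the induced $\bC^\times$-action on $H^0(X_0,L_0^k)$ (equivalently, on $\det H^0(X_0,L_0^k)$); the Donaldson--Futaki invariant $F_1$ is then the coefficient of $k^{-1}$ in the asymptotic expansion of $w_k/(kd_k)$. Since $d_k$ is the Hilbert polynomial of $(X,L)$ (of degree $n$ in $k$) and $w_k$ grows like $k^{n+1}$, the ratio $w_k/(kd_k)$ does admit such an expansion, so the proposition will follow at once from the two identities $d_k=N(kP)$ and $w_k=N(k\Gamma)-N(kP)$: one then substitutes $w_k/(kd_k)=\big(N(k\Gamma)-N(kP)\big)/\big(kN(kP)\big)$ and reads off the coefficient of $k^{-1}$.

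For the first identity, flatness of the total space $\mathcal X$ of the test configuration over its base $\bC$ gives $\dim H^0(X_0,L_0^k)=\dim H^0(X,L^k)$ for $k\gg0$, and by the toric dictionary of \S\ref{sec_toric} the latter equals the number of points of $kP$ in the lattice $\Lambda^*$, i.e. $N(kP)$. For the weight identity I would use the filtration description of the toric test configuration. Restricting to integers $k$ divisible by the integer $N$ of \S\ref{s_mov}, so that $P$, $\Gamma$ and the $\Phi_a$ are integral for the relevant lattice, put, for each lattice point $m\in kP\cap\Lambda^*$,
\[
h(m)=k\min_{a}\Phi_a(m/k)\in\bZ_{\geq0}
\]
(the inequality $h(m)\geq0$ holds because $\Phi_a\geq0$ on $P=P(0)$). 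Then the fibre of $k\Gamma$ over $m$ is exactly $\{(m,j):0\leq j\leq h(m)\}$, so counting lattice points of $k\Gamma$ column by column gives $N(k\Gamma)=\sum_{m\in kP\cap\Lambda^*}\big(h(m)+1\big)=N(kP)+\sum_{m}h(m)$. On the other hand, the test configuration $\Gamma$ induces on $R_k:=H^0(X,L^k)$ the filtration $F^jR_k=\langle s_m : h(m)\geq j\rangle$, where $s_m$ is the toric basis section attached to $m$, and (for $k\gg0$) the associated graded of this filtration is $H^0(X_0,L_0^k)$ as a $\bC^\times$-module, with the image of $s_m$ having weight $h(m)$. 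Since all weights are non-negative,
\[
w_k=\sum_{m\in kP\cap\Lambda^*}h(m)=\sum_{j\geq1}\#\{m\in kP\cap\Lambda^*:h(m)\geq j\}=\sum_{j\geq1}\dim F^jR_k,
\]
and comparing this with the column count yields $w_k=N(k\Gamma)-N(kP)$, as required.

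The step I expect to be the main obstacle --- the only one that is not elementary lattice-point bookkeeping --- is the identification of the $\bC^\times$-action on $H^0(X_0,L_0^k)$ with the associated graded of $F^\bullet R_k$, together with the precise value $h(m)$ of the weight and the correct sign convention (so that ``destabilising'' matches the intended sign of $F_1$). This rests on the explicit toric model of the degeneration: $X_0$ is obtained by gluing the toric varieties attached to the top faces $\tilde{N}_a$ of $\Gamma$, and one must check that $\mathcal X\to\bC$ is flat with precisely this central fibre and that its $\bC^\times$-grading is read off the $t$-coordinate of $\Gamma$. For this I would appeal directly to \cite{Don_ToricScal}; everything else in the argument is formal.
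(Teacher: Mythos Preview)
The paper does not give its own proof of this proposition: it is simply quoted from \cite{Don_ToricScal} (and the reader is later referred to \cite[\S4.2]{Don_ToricScal} for details). So there is no ``paper's proof'' to compare against beyond the citation.

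Your sketch is the correct one and is essentially Donaldson's argument. The two bookkeeping identities are right: $d_k=N(kP)$ by flatness plus the toric dictionary, and the column count $N(k\Gamma)=\sum_{m\in kP\cap\Lambda^*}(h(m)+1)$ together with $w_k=\sum_m h(m)$ gives $w_k=N(k\Gamma)-N(kP)$. You have also correctly isolated the only non-formal step, namely the identification of the $\bC^\times$-module $H^0(X_0,L_0^k)$ with the associated graded of the filtration $F^jR_k=\langle s_m:h(m)\geq j\rangle$, together with the sign/weight convention. That is precisely the content supplied by the explicit toric Rees-algebra model in \cite{Don_ToricScal}, and your deferral there is exactly what the paper itself does.
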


The significance of this definition in relation to K-stability is as 
follows:
\begin{dfn}\label{d1.6.10.12}
Let $(X,L)$ be a toric variety with moment polytope $P$. 
We say that $(X,L)$ is K-polystable with respect to toric test
configurations if for every toric test configuration corresponding to
an $(n+1)$-dimensional polytope $\Gamma$ as above, the
Donaldson--Futaki invariant $F_1$ is $\leq 0$, with equality if and
only if $\Gamma$ corresponds to a product test configuration.
\end{dfn}

The reader is referred to \cite[\S4.2]{Don_ToricScal} for the details.

Our next theorem gives a formula for the Donaldson--Futaki invariant,
given a choice of toric K\"ahler metric $g$, which is very analogous
to the formula for the slope given in Theorem~\ref{thm_3}.

Clearly $\Gamma$ is closely related to the family $P_t$ of
polytopes \eqref{e1.21a.9.12}. More precisely, let
$\pi$ denote the restriction to $\Gamma$ of the projection $(x,t)\mapsto t$.
Then $\pi^{-1}(t) = P_t$, where $P_t$ is as in
\eqref{e1.21a.9.12}.   By analogy with 
\S\ref{s_mov}, let us introduce the following notation:
\begin{itemize}
\item Write $\partial \Gamma = \tilde{N} \cup \partial \Gamma^+$,
  where $N$ is the `roof' of $\Gamma$, that is the union of the facets
  defined by the hyperplanes $\tPhi_a(x,t)=0$ and $\Gamma^+$ is the
  `vertical part' of $\partial \Gamma$---the union of facets contained
  in sets of the form $F\times \bR$, where $F$ is a facet of $P$.

\item Define a positive measure $\rd \tilde{p}$ with support on the
  $(n-1)$-skeleton of $\tilde{N}$ as follows: if $\tilde{N}_{ab} =
  \Gamma\cap \{\tPhi_a = 0\}\cap \{\tPhi_b =0\}$, define
\begin{equation}\label{e1.6.10.12}
\rd\tilde{p}_{ab} = |\rd \tPhi_a-\rd \tPhi_b|^2_g\,\rd
\tilde{\tau}_{ab} \mbox{ on the interior of }\tN_{ab}.
\end{equation}
This makes sense because $\rd\tPhi_a - \rd\tPhi_b = \rd\Phi_a - \rd
\Phi_b$ is a $1$-form on $P$; its length can thus be measured with the 
toric metric $g$.
Then define $\rd\tilde{p}$ on $\tilde{N}$ to be equal to
$\rd\tilde{p}_{ab}$ on the relative interior of $\tilde{N}_{ab}$ for
all $a\neq b$.
\item We say that $c$ is a critical value of $\pi$ if $\pi^{-1}(c)$ contains a
  vertex of $\Gamma$.  It is easily seen that $c$ is a critical value
  of $\pi$ if and only if it is one of the $c_j$ of
  \eqref{e12.5.10.12}. 
\end{itemize}

Now define
\begin{equation}\label{e11.13.6.12}
\Delta(\Gamma) = \frac{1}{\Vol(\Gamma)} \int_{\tN} \rd \widetilde{p},
\end{equation}
the integral being over the roof $\tN$ of $\Gamma$.  Since $\rd
\widetilde{p}$ is a non-negative measure,
\begin{equation}\label{e21.16.6.12}
\Delta(\Gamma) \geq 0\mbox{ for any }\Gamma
\end{equation}
and
\begin{equation}\label{e22.16.6.12}
\Delta(\Gamma) = 0 \mbox{ if and only if the roof of $\Gamma$ has no codimension-2 faces}.
\end{equation}
In other words, $\Delta(\Gamma)\geq 0$ with equality if and only if $\Gamma$ corresponds to a product test configuration.

\begin{thm}
Let $(X,L)$ be a smooth polarized toric variety with moment polytope
$P\subset \bR^n$.  Let $\Gamma \subset \bR^{n+1}$ be a polytope
defining a toric test configuration for $(X,L)$.  Then, for any choice
of toric K\"ahler metric $g$ in the K\"ahler class $c_1(L)$ on $X$,
the Donaldson--Futaki invariant of $\Gamma$ is
given by 
\begin{equation}\label{e4.25.2.12}
F_1 = \frac{\Vol(\Gamma)}{2\Vol(P)}\left(\Av_\Gamma(\pr_1^*(s(g)) - \Av_P(s(g))
- \Delta(\Gamma)\right),
\end{equation}
where $s(g)$ is the scalar curvature of $g$,  $\Av_A(f)$ denotes average value of the function $f$ over the set $A$ and $\pr_1$ denotes the vertical projection $\Gamma\to P$.
\label{t1.25.2.12}\end{thm}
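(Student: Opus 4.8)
The plan is to start from the combinatorial description of the Donaldson--Futaki invariant recalled above: $F_1$ is the coefficient of $k^{-1}$ in the expansion of $w_k/(kd_k)$, with $w_k=N(k\Gamma)-N(kP)$ and $d_k=N(kP)$. Choosing $N$ so that $N\Gamma$ and $NP$ are lattice polytopes and applying Theorem~\ref{EMthrm} to a lattice dilate (then rescaling $k$ within the progression $N\bZ$), one gets
\begin{equation*}
\begin{aligned}
N(k\Gamma)&=k^{n+1}\Vol(\Gamma)+\tfrac12 k^{n}\Vol(\partial\Gamma)+O(k^{n-1}),\\
N(kP)&=k^{n}\Vol(P)+\tfrac12 k^{n-1}\Vol(\partial P)+O(k^{n-2}),
\end{aligned}
\end{equation*}
where $\Vol(\partial\,{\cdot}\,)$ is the total mass of the boundary Leray form in the normalisation of Theorem~\ref{EMthrm}. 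Since $w_k$ and $kd_k$ both have degree $n+1$, only these four numbers enter the $k^{-1}$ coefficient, and a short division gives
\begin{equation*}
F_1=\frac{\tfrac12\Vol(\partial\Gamma)-\Vol(P)}{\Vol(P)}-\frac{\Vol(\Gamma)}{\Vol(P)^2}\cdot\tfrac12\Vol(\partial P).
\end{equation*}
It therefore suffices to rewrite $\Vol(\partial\Gamma)$ and $\Vol(\partial P)$ in terms of the scalar curvature and of $\Delta(\Gamma)$.

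Next I would exploit the fibred picture $\Gamma=\{(x,t):x\in P,\ 0\le t\le\min_{a}\Phi_a(x)\}$, which decomposes $\partial\Gamma$ (up to a null set) into the base $P\times\{0\}$, the roof $\tN$, and the vertical part lying over $\partial P$ (fibred over $(0,\infty)$ with fibre $\partial P(t)^+$). The projection $\pr_1$ identifies the base and each roof facet $\tN_a=\{t=\Phi_a(x)\}$ with a subset of $P$ compatibly with the lattices, so the base and the roof each have total Leray mass $\Vol(P)$; and along the vertical part the Leray form is $\rd t$ wedged with the Leray form of $\partial P(t)^+$, so the vertical part has total mass $\int_0^\infty\Vol(\partial P(t)^+)\,\rd t$. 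Hence $\tfrac12\Vol(\partial\Gamma)-\Vol(P)=\tfrac12\int_0^\infty\Vol(\partial P(t)^+)\,\rd t$.

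The scalar curvature now enters through Proposition~\ref{p2.9.9.12} (the $f=1$ case of Theorem~\ref{t1.9.9.12}), which, for every regular value $t$, expresses $\Vol(\partial P(t)^+)$ as $\int_{P(t)}s$ minus a $t$-derivative term minus $\int_{N(t)}\rd p_t$. I would integrate this over $t\in(0,\infty)$---the finitely many critical values $c_j$ of \eqref{e12.5.10.12} forming a null set---and handle the three terms separately. By Fubini, $\int_0^\infty\bigl(\int_{P(t)}s\bigr)\,\rd t=\int_{\Gamma}\pr_1^*(s)=\Vol(\Gamma)\,\Av_\Gamma(\pr_1^*s)$. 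The $t$-derivative term telescopes to $0$: it is $\frac{\rd}{\rd t}$ of (a constant multiple of) $t\mapsto\int_{N(t)}|\rd\Phi|_g^2\,\rd\sigma_t$, which is continuous on $[0,\infty)$---the Leray forms vary smoothly on each regular interval, and a facet appearing or disappearing at some $c_j$ has vanishing area there---and which vanishes at $t=0$ and for $t>c_m$. The codimension-two contributions assemble, via $\rd\widetilde{\tau}_{ab}=\rd\tau_{ab}\wedge\rd t$ and $\rd\tPhi_a-\rd\tPhi_b=\rd\Phi_a-\rd\Phi_b$, into $\int_{\tN}\rd\widetilde{p}=\Vol(\Gamma)\Delta(\Gamma)$ by \eqref{e11.13.6.12}. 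Finally, comparing the Euler--Maclaurin and the density-function expansions of $N(kP)=\chi(X,L^k)$ yields $\Vol(\partial P)=\int_P s\,\rd x=\Vol(P)\,\Av_P(s)$ (equivalently \eqref{e2a.8.8.11}). Substituting all of this into the displayed formula for $F_1$ and simplifying produces exactly \eqref{e4.25.2.12}.

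The step I expect to be the main obstacle is the passage to and through the critical values $c_j$: Proposition~\ref{p2.9.9.12} is available only at regular $t$ and the family $P(t)$ is merely piecewise smooth in $t$, so one must check carefully that $t\mapsto\int_{N(t)}|\rd\Phi|_g^2\,\rd\sigma_t$ is genuinely continuous across each $c_j$ (so that the total-derivative term really telescopes to zero) and that the Fubini identifications of $\int_0^\infty\int_{P(t)}s$ and $\int_0^\infty\int_{N(t)}\rd p_t$ with integrals over $\Gamma$ and $\tN$ are valid. The remaining work---the Leray-form bookkeeping on $\partial\Gamma$ and tracking the numerical constants so that the stated formula emerges---is routine, but has to be done with care.
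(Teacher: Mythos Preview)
Your overall strategy matches the paper's: integrate Proposition~\ref{p2.9.9.12} over $t$, use Fubini for the scalar-curvature term, and identify the codimension-$2$ contribution with $\Delta(\Gamma)$. Your decomposition of $\partial\Gamma$ and the algebra leading to the displayed formula for $F_1$ are correct and agree with the paper's use of Donaldson's lemma on $w_k$ together with~\eqref{e32.6.10.12}.

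There is, however, a genuine gap at exactly the point you flagged as the main obstacle. Your argument that $t\mapsto\int_{N(t)}|\rd\Phi|_g^2\,\rd\sigma_t$ is continuous across each critical value $c_j$ rests on the claim that ``a facet appearing or disappearing at $c_j$ has vanishing area there''. This is not the only mechanism producing a critical value. It can also happen that two roof facets $\tN_a$ and $\tN_b$ meet along a \emph{horizontal} codimension-$2$ face $\tN_{ab}\subset\{t=c_j\}$ (this occurs precisely when $\rd\Phi_a$ and $\rd\Phi_b$ are parallel; see Figure~3(b)). In that case the relevant facet of $N(t)$ does \emph{not} shrink to zero area at $c_j$: rather, its conormal and Leray form jump discontinuously from $(\rd\Phi_a,\rd\sigma_a)$ to $(\rd\Phi_b,\rd\sigma_b)$, and the integral $\int_{N(t)}|\rd\Phi|_g^2\,\rd\sigma$ jumps accordingly. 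The paper computes this jump explicitly in Lemma~\ref{l1.16.6.12} and shows it equals $\int_{\tN_{ab}}\rd\tilde p_{ab}$.

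There is a compensating error in your treatment of the codimension-$2$ term: the identity $\rd\tilde\tau_{ab}=\rd\tau_{ab}\wedge\rd t$ is only valid for \emph{non-horizontal} $\tN_{ab}$, so your $\int_0^\infty\int_{N(t)}\rd p_t\,\rd t$ recovers only the non-horizontal part of $\int_{\tN}\rd\tilde p$. Thus your two incorrect claims---the telescoping term vanishes, and the $\rd p_t$ integral gives all of $\Vol(\Gamma)\Delta(\Gamma)$---happen to cancel, and you land on the right formula. But this cancellation is not visible without Lemma~\ref{l1.16.6.12}; as written, both individual assertions are false. To repair the argument you must either rule out horizontal faces of the roof a priori, or (as the paper does) compute the jumps at each $c_j$ and match them with the horizontal contributions to $\Delta(\Gamma)$.
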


The following is a simple consequence of \eqref{e4.25.2.12}:
\begin{cor}[cf.\ \cite{MR2407096}]
Suppose that $X$ admits a toric cscK metric in the K\"ahler class
$c_1(L)$.  Then the Donaldson--Futaki invariant of any toric test
configuration with polytope $\Gamma$ is $\leq 0$, with equality if and
only if $\Gamma$ is a product configuration. In other words the
existence of a toric cscK metric implies that $(X,L)$ is K-polystable
with respect to toric test configurations. 
\end{cor}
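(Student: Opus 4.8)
The plan is to deduce the corollary directly from the formula \eqref{e4.25.2.12} of Theorem~\ref{t1.25.2.12}. First I would specialize to the case in which $g$ is a toric cscK metric, so that $s(g)$ is the constant function $\Av_P(s(g))$ on $X$. Since $\pr_1:\Gamma\to P$ is the vertical projection, $\pr_1^*(s(g))$ is then the same constant function on $\Gamma$, whence $\Av_\Gamma(\pr_1^*(s(g))) = \Av_P(s(g))$. Consequently the first two terms in the bracket on the right-hand side of \eqref{e4.25.2.12} cancel, leaving
\begin{equation*}
F_1 = -\frac{\Vol(\Gamma)}{2\Vol(P)}\,\Delta(\Gamma).
\end{equation*}

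Next I would invoke the sign and degeneracy properties of $\Delta(\Gamma)$ recorded in \eqref{e21.16.6.12} and \eqref{e22.16.6.12}: namely $\Delta(\Gamma)\geq 0$ always, and $\Delta(\Gamma)=0$ precisely when the roof $\tN$ of $\Gamma$ has no codimension-$2$ faces, i.e.\ exactly when $\Gamma$ corresponds to a product test configuration. Since $\Vol(\Gamma)>0$ and $\Vol(P)>0$, it follows at once that $F_1\leq 0$, with equality if and only if $\Gamma$ is a product configuration. This is precisely K-polystability with respect to toric test configurations in the sense of Definition~\ref{d1.6.10.12}, so the corollary follows.

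There is essentially no obstacle at this stage: all the analytic work has already been carried out in establishing the distributional expansion of Theorem~\ref{t1.9.9.12} and then the Donaldson--Futaki formula of Theorem~\ref{t1.25.2.12}. The only points requiring (routine) care are the compatibility of the vertical projection with averages---immediate, since $\pr_1$ pushes Lebesgue measure on $\Gamma$ to Lebesgue measure on $P$ fibrewise, so that the average over $\Gamma$ of a $\pr_1$-pullback equals the average over $P$---and checking that the characterization of $\Delta(\Gamma)=0$ in \eqref{e22.16.6.12} coincides with the notion of a product test configuration, which is exactly how the latter was defined. Hence the proof is a one-line consequence of Theorem~\ref{t1.25.2.12}.
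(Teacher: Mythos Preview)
Your proof is correct and follows exactly the same approach as the paper's own proof of the corollary. One small caveat: your parenthetical remark that $\pr_1$ pushes Lebesgue measure on $\Gamma$ to Lebesgue measure on $P$ is not quite right (the pushforward is weighted by the fibre height), but this is irrelevant since your first paragraph already gives the correct reason---the pullback of a constant is constant, hence equal to its own average over any set.
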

\begin{rmk} We follow the sign convention of \cite{Don_ToricScal}
  rather than \cite{Thomas_cscK} here, so for us negative
  Donaldson--Futaki invariant corresponds to stability. 
\end{rmk}

\begin{proof} (Of the corollary.) 
If the scalar curvature $s$ is constant, then it is equal to its
average over $P$ and also to the average of $\pr_1^*(s)$ over
$\Gamma$. Thus these averages cancel from \eqref{e4.25.2.12}, leaving 
$$
F = -\frac{\Vol(\Gamma)}{2\Vol(P)}\Delta(\Gamma).
$$
The result now follows from \eqref{e21.16.6.12} and \eqref{e22.16.6.12}.
\end{proof}

\subsection{Computation of the Donaldson--Futaki invariant}

We use the following observation:
\begin{lem}[\cite{Don_ToricScal}]  The large-$k$ expansion of $w_k$ is given
  by
\begin{equation}\label{e1.25.2.12}
w_k = \Vol(\Gamma)k^{n+1} +\frac{1}{2}\Vol(\partial \Gamma^+) k^{n} +
O(k^{n-1}).
\end{equation}
\end{lem}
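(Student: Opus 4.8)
The plan is to apply the Euler--Maclaurin formula of Theorem~\ref{EMthrm} (with the constant test function $f\equiv 1$) separately to the $(n+1)$-dimensional polytope $\Gamma\subset\bR^{n+1}$ and to the $n$-dimensional polytope $P\subset\bR^n$, and then to track the cancellation between the boundary term of $N(k\Gamma)$ and the full lattice count $N(kP)$ that appears in $w_k = N(k\Gamma)-N(kP)$.

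First I would check that $\Gamma$ has integral conormals, so that Theorem~\ref{EMthrm} applies. Writing $\Phi_a(x) = \langle x,m_a\rangle + b_a$ with $m_a\in\bZ^n$, the roof facet $\tN_a\subset\Gamma$ lying on the hyperplane $\tPhi_a(x,t) = \Phi_a(x)-t = 0$ has conormal $(m_a,-1)\in\bZ^{n+1}$, which is automatically primitive because of the $-1$ in the last slot; the base facet $P\times\{0\}$ has conormal $e_{n+1}$; and each vertical facet of $\Gamma$, lying in $F\times\bR$ for a facet $F$ of $P$, has conormal $(\nu_F,0)$ with $\nu_F$ the primitive conormal of $F$. (If the $\Phi_a$ are merely rational, one first passes to $N\Gamma$ for a suitable integer $N$ and rescales, exactly as in \S\ref{s_mov}; this does not affect the conclusion.) Theorem~\ref{EMthrm} then gives
\begin{equation}\label{e.wk.EM}
N(k\Gamma) = \Vol(\Gamma)k^{n+1} + \frac{1}{2}\Vol(\partial\Gamma)k^{n} + O(k^{n-1}),\qquad N(kP) = \Vol(P)k^{n} + O(k^{n-1}),
\end{equation}
where $\Vol$ denotes Lebesgue, resp.\ Leray, measure normalised by the lattice as in Theorem~\ref{EMthrm}.

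The crucial geometric step is the identity $\Vol(\tN) = \Vol(P)$. The roof of $\Gamma$ is the graph $t = \min_a\Phi_a(x)$ over $P$, and it decomposes into the pieces $\tN_a = \{(x,\Phi_a(x)) : x\in P_a\}$, where $P_a = \{x\in P : \Phi_a(x)\le\Phi_b(x)\ \text{for all}\ b\}$; the $P_a$ tile $P$ with pairwise disjoint interiors. On $\tN_a$ I would use $(x_1,\dots,x_n,\tPhi_a)$ as coordinates near the facet; since $t = \Phi_a(x)-\tPhi_a$ and $\rd x_1\wedge\cdots\wedge\rd x_n\wedge\rd\Phi_a = 0$, one gets $\rd x_1\wedge\cdots\wedge\rd x_n\wedge\rd t = -\,\rd x_1\wedge\cdots\wedge\rd x_n\wedge\rd\tPhi_a$, so the Leray form $\rd\tilde\sigma_a$ of $\tN_a$ pulls back under the vertical projection $\tN_a\to P_a$ to $\pm$ Lebesgue measure $\rd x$; hence $\int_{\tN_a}\rd\tilde\sigma_a = \Vol(P_a)$ and, summing over $a$, $\Vol(\tN) = \Vol(P)$. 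The same (now trivial, since the conormal is $e_{n+1}$) computation shows the base facet $P\times\{0\}$ also has Leray volume $\Vol(P)$.

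Finally I would assemble the pieces. Decomposing $\partial\Gamma = (P\times\{0\})\cup\tN\cup\partial\Gamma^+$ into its three types of facets, we have $\Vol(\partial\Gamma) = 2\Vol(P) + \Vol(\partial\Gamma^+)$, so from \eqref{e.wk.EM},
\begin{align}
w_k = N(k\Gamma)-N(kP) &= \Vol(\Gamma)k^{n+1} + \frac{1}{2}\bigl(2\Vol(P) + \Vol(\partial\Gamma^+)\bigr)k^{n} - \Vol(P)k^{n} + O(k^{n-1})\nonumber\\
&= \Vol(\Gamma)k^{n+1} + \frac{1}{2}\Vol(\partial\Gamma^+)k^{n} + O(k^{n-1}),
\end{align}
which is the assertion. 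The step I expect to need the most care is the geometric identity $\Vol(\tN) = \Vol(P)$: one must make sure the Leray forms entering \eqref{e.wk.EM} are taken with respect to the \emph{primitive integral} conormals (so that the $O(k^{n-1})$ error of the Euler--Maclaurin formula is genuinely uniform over the chambers), and that the $P_a$ really do tile $P$; once this is set up correctly the remaining bookkeeping is immediate.
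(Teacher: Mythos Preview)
Your proof is correct. Note that the paper does not actually give its own proof of this lemma: it is simply quoted from Donaldson \cite{Don_ToricScal}, so there is no in-text argument to compare against. Your approach---applying the Euler--Maclaurin formula of Theorem~\ref{EMthrm} with $f\equiv 1$ to $\Gamma$ and to $P$ separately, and then identifying the Leray volume of the roof with $\Vol(P)$ via the vertical projection---is exactly the natural one, and your verification that the primitive integral conormal of a roof facet is $(m_a,-1)$ (so that the Leray form pulls back to Lebesgue measure on the base) is the right way to justify that identity.

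One small bookkeeping remark: in the paper's convention the defining functions $\tPhi_a$ are augmented by $\tPhi_0(x,t)=t$, so the base $P\times\{0\}$ is formally one of the facets $\tN_0$ and the paper writes the decomposition as $\partial\Gamma = \tN\cup\partial\Gamma^+$ with $\tN$ already containing the base. Your three-way split $(P\times\{0\})\cup\tN\cup\partial\Gamma^+$ differs only in this grouping; since you correctly compute the Leray volume of both base and roof to be $\Vol(P)$, the arithmetic is unchanged.
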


Given this result, the main problem is to understand $\Vol(\partial
\Gamma^+)$ in terms related to the metric.  Since the intersection of
$\del\Gamma_+$ with a horizontal slice $P_t$ is the `old part' $\del
P_+$ of the boundary of $P_t$, we have
\begin{equation}\label{e1.14.6.12}
\Vol(\partial \Gamma^+) = \int_0^{c_m} \Vol(\partial P(t)^+)\,\rd t.
\end{equation}
where $c_m$ is the largest critical value of $\pi$ as in
\eqref{e12.5.10.12}.  Combining this with Proposition~\ref{p2.9.9.12},
we shall obtain the formula:
\begin{prop}\label{p1.6.10.12}
With the above definitions and notation, we have
\begin{equation}\label{e31.6.10.12}
\Vol(\partial \Gamma^+) = \int_\Gamma \pr_1^*(s) - \Vol(\Gamma)\Delta(\Gamma),
\end{equation}
where $\pr_1$ is the restriction to $\Gamma$ of the projection
$(x,t)\mapsto x$.
\end{prop}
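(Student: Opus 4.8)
The plan is to integrate the slicewise identity of Proposition~\ref{p2.9.9.12} against $\rd t$ over $t\in[0,c_m]$ and feed the result into \eqref{e1.14.6.12}. For all but the finitely many critical values of the family $\{P(t)\}$ the set $\partial P(t)^+$ has a well-defined Leray-form volume, so Proposition~\ref{p2.9.9.12} applies for almost every $t$ and, the integrands being bounded,
\[
\Vol(\partial\Gamma^+)=\int_0^{c_m}\Vol(\partial P(t)^+)\,\rd t=\int_0^{c_m}\!\int_{P(t)}s\,\rd t-\frac12\int_0^{c_m}\frac{\rd}{\rd t}\!\left(\int_{N(t)}|\nu_t|^2\,\rd\sigma_t\right)\rd t-\int_0^{c_m}\!\int_{N(t)}\rd p_t\,\rd t .
\]
The first term is immediate from Fubini: $\Gamma$ fibres over $[0,c_m]$ with fibre $P(t)$ over $t$, hence $\int_0^{c_m}\int_{P(t)}s\,\rd t=\int_\Gamma\pr_1^*(s)$. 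So the content of the proposition is that the last two integrals add up to $\int_{\tN}\rd\widetilde p=\Vol(\Gamma)\Delta(\Gamma)$.

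For the total-derivative term I would set $g(t)=\int_{N(t)}|\nu_t|^2\,\rd\sigma_t$, which is smooth away from the critical values. A short argument --- using that the combinatorial jumps of the family involve facets whose $\rd\sigma$-measure, or whose $|\rd\Phi_a|_g$ (recall $G\,\rd\Phi_a\to0$ along faces of $P$, cf.\ \eqref{e31a.4.9.12}), tends to zero --- shows that $g$ extends continuously to $[0,c_m]$, so $\int_0^{c_m}g'\,\rd t=g(c_m^-)-g(0^+)=g(c_m^-)$ since $N(0)=\emptyset$. The limit $g(c_m^-)$ is the $|\nu|^2$-mass of the `top' of the roof, and it is nonzero exactly when two roof facets close up head-on there, i.e.\ along a codimension-$2$ face $\tN_{ab}$ of $\tN$ contained in the single level $\{t=c_m\}$, on which $\rd\Phi_b=-\rd\Phi_a$ and hence $\rd\tPhi_a-\rd\tPhi_b=2\,\rd\Phi_a$. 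A Leray-form computation --- pushing $\rd\widetilde\tau_{ab}$ down over the fibre --- then identifies $\frac12\,g(c_m^-)$ with the total $\rd\widetilde p$-mass carried by these horizontal roof faces.

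The remaining, `slanted', codimension-$2$ faces of $\tN$ I would treat by a coarea/Fubini argument. For such a face $\tN_{ab}$ the projection $(x,t)\mapsto t$ is a submersion onto an interval, with fibre $N_a(t)\cap N_b(t)$, and because each $\tPhi_a$ depends on $t$ only through the single additive term $-t$, the Leray form $\rd\widetilde\tau_{ab}$ on $\tN_{ab}$ pushes down to $\rd\tau_{ab}\wedge\rd t$; since moreover $\rd\tPhi_a-\rd\tPhi_b=\rd\Phi_a-\rd\Phi_b$, this gives $\int_{\tN_{ab}}\rd\widetilde p_{ab}=\int\!\left(\int_{N_a(t)\cap N_b(t)}\rd p_{ab}\right)\rd t$. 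Summing over all pairs $a<b$ of roof constraints and over the two kinds of codimension-$2$ roof faces then yields $\frac12\,g(c_m^-)+\int_0^{c_m}\int_{N(t)}\rd p_t\,\rd t=\int_{\tN}\rd\widetilde p$, and substituting back into the displayed identity gives \eqref{e31.6.10.12}.

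I expect the main obstacle to be precisely this Leray-form bookkeeping: cleanly distinguishing the `slanted' codimension-$2$ roof faces from the `horizontal' ones sitting over $t=c_m$, verifying that $\rd\widetilde\tau_{ab}$ pushes down correctly in each case, and treating carefully the behaviour of $g$ at the critical values (where $P(t)$ is not a regular slice), together with a consistent accounting of the factors of $\frac12$ in the definitions \eqref{e11.5.10.12} and \eqref{e1.6.10.12} of $\rd p$ and $\rd\widetilde p$. By contrast the $\int_{P(t)}s$ term is a routine application of Fubini, and the fact that the integrand in \eqref{e1.14.6.12} is only defined at regular values causes no difficulty since those form a set of full measure.
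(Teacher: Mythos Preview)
Your overall strategy---integrate Proposition~\ref{p2.9.9.12} over $[0,c_m]$, handle the scalar-curvature term by Fubini, and match the remaining boundary pieces against the codimension-2 faces of the roof---is exactly what the paper does. But there is a genuine error in your treatment of the total-derivative term.

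You assert that $g(t)=\int_{N(t)}|\nu_t|^2\,\rd\sigma_t$ extends \emph{continuously} across each critical value $c_j$, on the grounds that either a new facet is born with zero $\rd\sigma$-measure or an old one dies into $\partial P$ where $G\,\rd\Phi_a=0$. This is false: a third phenomenon can occur at a critical value $c_j$ with $0<c_j<c_m$, namely that a facet of $N(t)$ governed by one roof constraint $\tPhi_a$ is replaced, as $t$ crosses $c_j$, by a facet governed by a \emph{different} roof constraint $\tPhi_b$. Geometrically this happens precisely when the roof has a \emph{horizontal} codimension-2 face $\tN_{ab}$ lying in the level $\{t=c_j\}$ (this is what Figure~3(b) in the paper illustrates). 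At such a $c_j$ the limiting facet $F=\tN_a\cap\tN_b$ has positive $\rd\sigma$-measure, and since in general $|\rd\Phi_a|_g^2\,\rd\sigma_a\neq|\rd\Phi_b|_g^2\,\rd\sigma_b$ on $F$, the function $g$ has a genuine jump there. (Your related claim that at such a horizontal face one must have $\rd\Phi_b=-\rd\Phi_a$ is also incorrect: a short computation in adapted coordinates---see the proof of Lemma~\ref{l1.16.6.12}---shows $\rd\Phi_a$ and $\rd\Phi_b$ are proportional, not opposite.)

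Consequently, the correct bookkeeping is not $\int_0^{c_m}g'\,\rd t=g(c_m^-)$ but rather a telescoping sum over the intervals $(c_{j-1},c_j)$, producing jump terms $\int_{N(c_j^-)}|\rd\Phi|_g^2\,\rd\sigma-\int_{N(c_j^+)}|\rd\Phi|_g^2\,\rd\sigma$ for every $j=1,\dots,m-1$ in addition to the terminal term $g(c_m^-)$. Each such jump matches a horizontal roof face $\tN_{ab}\subset\{t=c_j\}$ and contributes exactly $\int_{\tN_{ab}}\rd\tilde p_{ab}$ (this is the content of Lemma~\ref{l1.16.6.12}). Your treatment of the non-horizontal roof faces via the coarea argument is correct in spirit; once you fix the continuity error, the two families of contributions together give the full $\int_{\tN}\rd\tilde p$ and the proposition follows.
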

\begin{proof}
We integrate \eqref{e4.27.2.12} over each interval $(c_{j-1},c_j)$ and
sum over $j$, getting
\begin{align} \Vol(\partial \Gamma^+) &= \int_\Gamma \pr_1^*(s) + \int_{N(0)}|\nu|^2\,\rd \sigma -
\sum_{j=1}^{m-1} \left(\int_{N(t_j^-)}|\nu|^2 \,\rd \sigma  -
  \int_{N(t_j^+)}|\nu|^2 \,\rd \sigma  \right)\nonumber
  \\
  &- \int_{N(t_m^-)} |\nu|^2 \,\rd \sigma
- \int_0^{t_m} \rd p_t\,\rd t.
\label{e2.25.2.12}
\end{align}
Now the integral over $N_0$ is zero because---by definition---$N_0$ is
empty.    Thus we have
\begin{equation}\label{e1.16.6.12}
\Vol(\partial \Gamma^+) - \int_\Gamma \pr_1^*(s)  = -
\sum_{j=1}^{m-1} \left(\int_{N(t_j^-)}|\nu|^2 \,\rd \sigma  -
  \int_{N(t_j^+)}|\nu|^2 \,\rd \sigma  \right)
- \int_{N(t_m^-)} |\nu|^2 \,\rd \sigma
- \int_0^{t_m} \rd p_t\,\rd t.
\end{equation}
The next lemma matches up the terms on the RHS of this equation
with the terms in the sum defining 
$\Delta(\Gamma)$, cf. \eqref{e11.13.6.12}.  Recall that the roof
$\tilde{N}$ of $\Gamma$ is a union of facets $\tilde{N}_a$ and its
$(n-1)$-skeleton consists of intersections of the form $\tilde{N}_{ab}
= \tilde{N}_a \cap \tilde{N}_b$.  We call $\tilde{N}_{ab}$ {\em
  horizontal} if it is contained in a horizontal slice $P_t$ and {\em
  non-horizontal} otherwise.  The point is that if $\tilde{N}_{ab}$ is
horizontal, then it has to appear as a facet contained in $N(t)
\subset \del P(t)$ and moreover $t$ has to be a critical value of
$\pi$, since $\tilde{N}_{ab} \subset \del P_t$ certainly implies that
$P_t$ contains a vertex of $\Gamma$. On the other hand, if
$\tilde{N}_{ab}$ is not horizontal, then it meets $P_t$ non-trivially
for $t$ in some interval $I$, and for each $t$ in the interior of $I$,
$P_t\cap \tilde{N}_{ab} = N_{ab}(t)$ is part of the $(n-2)$-skeleton
of $N(t)$.  These rather simple observations may nonetheless help with
the following:

\begin{figure}[!ht]
    \vspace{0.5cm}
\hspace*{\fill}
\subfloat[A critical slice $P_c$ in the case where $\rd\sigma_t$ and $\nu_t$ vary continuously through $t=c$.]{
    \label{fig2a}
    \includegraphics[width=55mm]{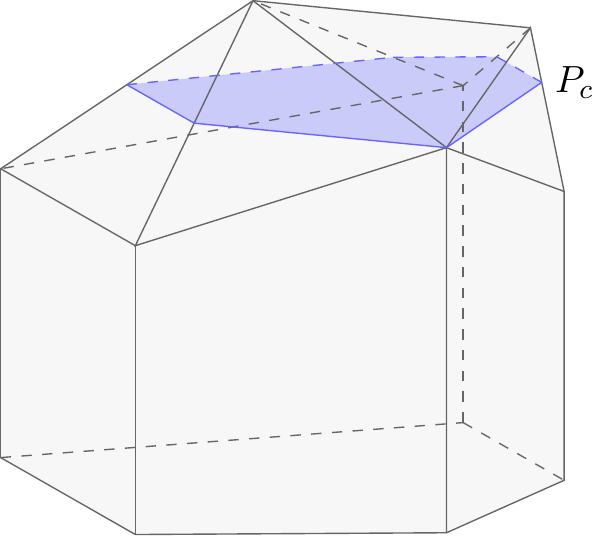}
}
\hfill
\subfloat[A critical slice $P_c$ in the case where $\rd\sigma_t$ and $\nu_t$
    do not vary continuously through $t=c$.
]{
    \label{fig2b}
    \includegraphics[width=55mm]{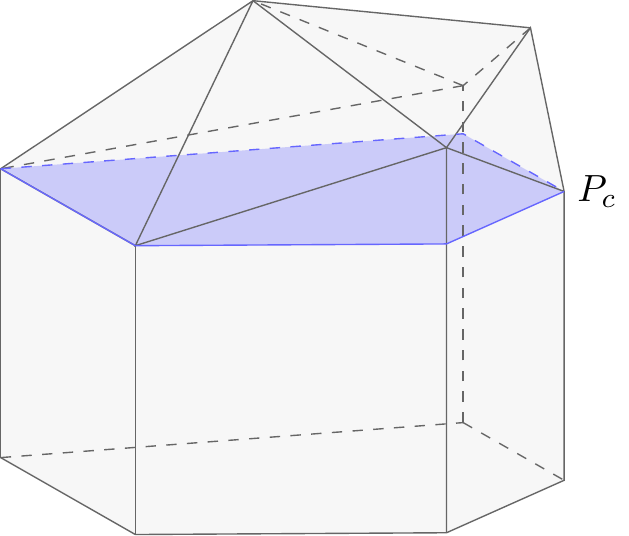}
}
\hspace*{\fill}
\label{figures2}
\caption{}
\end{figure}

\begin{lem}\label{l1.16.6.12}
We have
\begin{equation}\label{e2.16.6.12}
\sum_{j=1}^{m-1} \left(\int_{N(c_j^-)}|\rd\Phi|_g^2 \,\rd \sigma 
-  \int_{N(c_j^+)}|\rd\Phi|_g^2 \,\rd \sigma  \right)
+ \int_{N(c_m^-)} |\rd\Phi|_g^2 \,\rd \sigma =
\sum_{\tilde{N}_{ab} \, \mathrm{ horizontal}}
\int_{\tilde{N}_{ab}} \rd\tilde{p}_{ab}.
\end{equation}
and
\begin{equation}\label{e11.6.10.12}
\int_0^{c_m} \rd p_t\,\rd t = \sum_{\tilde{N}_{ab}\, \mathrm{ not\,
    horizontal} } \int_{\tilde{N}_{ab}}\rd\tilde{p}_{ab}.
\end{equation}
\end{lem}

\begin{proof} (See Fig.\ 3).  Pick $c_j < c_m$, consider any 
facet $F$, say, of $P_{c_j}$ and in particular the contribution $F$
makes to the sum on the LHS of \eqref{e2.16.6.12}.  Suppose first that
$F$ is the intersection of
$P_{c_j}$ with a single facet $\tilde{N}_a$ of $\Gamma$. Then 
$F_t = \tilde{N}_a\cap P_t$ for $t$ near $c_j$.    Hence the Leray
form and conormal of $F_t$ vary continously for $t$ near $c_j$. So
such facets contribute nothing to the sum on the LHS of \eqref{e2.16.6.12}.

The other case to consider is that 
$F = \tilde{N}_a\cap \tilde{N}_b$.  Note that this
is necessarily a horizontal face of $\tilde{N}$. Suppose that $a$ and
$b$ are ordered so that $F_t = P_t\cap \tilde{N}_a$ for $t < c_j$ but
$F_t = P_t \cap \tilde{N}_b$ for $t > c_j$ (it always being assumed
that $|t-c_j|$ is small).  Then the contribution to the sum on the LHS
of \eqref{e2.16.6.12} is 
\begin{equation}\label{e3.16.6.12}
\int_{F}( |\rd\Phi_{a}|_g^2\,\rd \sigma_a - |\rd\Phi_{b}|_g^2\,\rd
\sigma_b).
\end{equation}
We claim that this is equal to
\begin{equation}\label{e4.16.6.12}
\int_{\tilde{N}_{a}\cap \tilde{N}_b}|\tPhi_a -
\tPhi_b|_g^2\,\rd\tilde{\tau}_{ab} = \int_{\tN_{ab}} \rd\tilde{p}_{ab}.
\end{equation}
Because $\tilde{N}_a$ and $\tilde{N}_b$ meet in a horizontal plane, we can choose affine coordinates on $\bR^n$ so that
\begin{equation}
\tPhi_a(x,t) = Ax_1-t,\;\; \tPhi_b(x,t) = Bx_1-t , \mbox{ with }A>B >0.
\end{equation}
(Thus $\Gamma$ is given locally by the intersection of the half-spaces
$Ax_1\geq t$ and $B x_1 \geq t$.)   Then
\begin{equation}
\rd\Phi_a = -A\rd x_1, \rd\Phi_b = - B\rd x_1,\;\;\rd\sigma_a = \frac{1}{A}\rd x_2\ldots\rd x_n,
\rd\sigma_b = \frac{1}{B}\rd x_2\ldots\rd x_n,
\end{equation}
and so the integrand in \eqref{e3.16.6.12} is
\begin{equation}
(A-B)|\rd x_1|_g^2\,\rd x_2\ldots\rd x_n.
\end{equation}
On the other hand,
\begin{equation}
|\rd\tPhi_a-\rd\tPhi_b|^2 = (A-B)^2|\rd x_1|^2_g,\;\;
\rd\tilde{\tau}_{ab} = \frac{1}{A-B}\rd x_2\ldots \rd x_n,
\end{equation}
from which the equality of \eqref{e3.16.6.12} and \eqref{e4.16.6.12} follows.

This proves \eqref{e2.16.6.12} except that we have ignored the last term on the LHS and the codimension-2 faces of $\Gamma$ contained in $t=t_m$.   The required equality can
be obtained by straightforward modification of the above discussion and further details are omitted.
\end{proof}

The proof of Proposition~\ref{p1.6.10.12}  follows by combining
this Lemma with the definition of $\Delta(\Gamma)$.
\end{proof}

We can now easily complete the proof of Theorem~\ref{t1.25.2.12}: just
substitute \eqref{e31.6.10.12} into \eqref{e1.25.2.12} and use
\begin{equation}\label{e32.6.10.12}
kd_k = \Vol(P)k^{n+1}\left( 1 + \frac{1}{2}\Av_P(s)k^{-1} + O(k^{-2})\right)
\end{equation}
to calculate
$$
\frac{w_k}{kd_k} = \frac{\Vol(\Gamma)}{\Vol(P)} + 
\frac{\Vol(\Gamma)}{2\Vol(P)}\left(\Av_\Gamma(\pr_1^*(s)) - \Av_P(s) - \Delta(\Gamma)\right)k^{-1} + O(k^{-2}).
$$
The result now follows from the definition of $F_1$ as the coefficient
of $k^{-1}$ in $w_k/kd_k$.

\bibliographystyle{amsalpha}
\bibliography{paper}

\end{document}